\documentclass[a4paper, 11pt]{amsart}   
\usepackage{mathptmx, amssymb,amscd,latexsym, eulervm}   
\usepackage{amsmath}
\usepackage{amsthm}
\usepackage{mathdots}
\usepackage{xcolor}
\usepackage{hyperref}
\hypersetup{
 colorlinks = true,
 linkcolor=blue,
 citecolor=orange,
 urlcolor=red
}

\usepackage{setspace}
\usepackage{tabularx}
\usepackage{comment}
\usepackage{amsfonts}
\usepackage{paralist}
\usepackage{aliascnt}
\usepackage[alphabetic]{amsrefs}
\usepackage{amscd}
\usepackage{blkarray}
\usepackage{booktabs}
\usepackage{enumitem}
\usepackage{setspace}
\usepackage[inner=2.4cm,outer=2.4cm, top = 3cm, bottom=2.7cm, marginparwidth=2cm]{geometry}
\usepackage{relsize}
\usepackage{multirow}
\usepackage{placeins}
\usepackage{float}

\usepackage[capitalize,nameinlink]{cleveref}

\usepackage{tikz, tikz-cd}
\usepackage{calligra,mathrsfs}
\usetikzlibrary{matrix}
\usetikzlibrary{arrows,calc,babel}
\usetikzlibrary{matrix,fit,decorations.pathreplacing}
\allowdisplaybreaks

\tikzset{curve/.style={settings={#1},to path={(\tikztostart)
    .. controls ($(\tikztostart)!\pv{pos}!(\tikztotarget)!\pv{height}!270:(\tikztotarget)$)
    and ($(\tikztostart)!1-\pv{pos}!(\tikztotarget)!\pv{height}!270:(\tikztotarget)$)
    .. (\tikztotarget)\tikztonodes}},
    settings/.code={\tikzset{quiver/.cd,#1}
        \def\pv##1{\pgfkeysvalueof{/tikz/quiver/##1}}},
    quiver/.cd,pos/.initial=0.35,height/.initial=0}

\newtheorem{thm}{Theorem}[section]
\newtheorem{lemma}[thm]{Lemma}
\newtheorem{cor}[thm]{Corollary}
\newtheorem{prop}[thm]{Proposition}

\theoremstyle{definition}
\newtheorem{example}[thm]{Example}
\newtheorem{notation}[thm]{Notation}
\newtheorem{setup}[thm]{Setup}
\newtheorem{remark}[thm]{Remark}
\newtheorem{definition}[thm]{Definition}

\newtheorem{conjecture}[thm]{Conjecture}
\newtheorem{question}[thm]{Question}

\numberwithin{equation}{section}

\newtheorem*{thm*}{Theorem}
\newtheorem*{lemma*}{Lemma}
\newtheorem*{cor*}{Corollary}
\newtheorem*{prop*}{Proposition}

\newtheorem{manualtheoreminner}{Theorem}
\newenvironment{manualtheorem}[1]{
  \renewcommand\themanualtheoreminner{#1}
  \manualtheoreminner
}{\endmanualtheoreminner}

\theoremstyle{definition}
\newtheorem*{definition*}{Definition}
\newtheorem*{example*}{Example}
\newtheorem*{remark*}{Remark}

\theoremstyle{definition}
\newtheorem{manualdefinitioninner}{Definition}
\newenvironment{manualdefinition}[1]{
  \renewcommand\themanualdefinitioninner{#1}
  \manualdefinitioninner
}{\endmanualdefinitioninner}

\theoremstyle{definition}
\newtheorem{manualdefinitionsinner}{Definitions}
\newenvironment{manualdefinitions}[1]{
  \renewcommand\themanualdefinitionsinner{#1}
  \manualdefinitionsinner
}{\endmanualdefinitionsinner}

\theoremstyle{plain}

\theoremstyle{remark}
\newtheorem*{acknowledgements}{Acknowledgements}

\newcommand{\Spec}{\mathrm{Spec}}
\newcommand{\Proj}{\mathrm{Proj}}
\newcommand{\Hilb}{\mathrm{Hilb}}

\newcommand{\rank}{\mathrm{rank}}
\newcommand{\codim}{\mathrm{codim}}

\newcommand{\Sym}{\mathrm{Sym}}

\newcommand{\Z}{\mathbb{Z}}

\newcommand{\kk}{{\Bbbk}}

\newcommand{\bfA}{\mathbf{A}}

\newcommand{\bfP}{\mathbf{P}}

\newcommand{\bfF}{\mathbf{F}}

\newcommand{\bfw}{\mathbf{w}}
\newcommand{\bfu}{\mathbf{u}}

\newcommand{\mfm}{{\mathfrak{m}}}

\newcommand{\mcR}{{\mathcal{R}}}

\newcommand{\mcC}{{\mathcal{C}}}
\newcommand{\mcX}{{\mathcal{X}}}

\newcommand{\mcO}{{\mathcal{O}}}

\newcommand{\mcP}{{\mathcal{P}}}

\newcommand{\mcL}{{\mathcal{L}}}

\newcommand{\floor}[1]{\lfloor #1 \rfloor}

\title{Minimal degree}

\newcommand{\ritvik}[1]{{\color{cyan} \sf $\clubsuit$ Ritvik: [#1]}}

\makeatletter
\@namedef{subjclassname@2020}{
  \textup{2020} Mathematics Subject Classification}
\makeatother

\begin{document}

\author[M. \,Banks, R.\,Ramkumar]{Maya~Banks and Ritvik~Ramkumar}
\address{(Maya Banks) Department of Mathematics, Statistics, and Computer Science \\ University of Illinois Chicago \\ USA}
\email{mayadb@uic.edu}
\address{(Ritvik Ramkumar) Department of Mathematics\\University of Notre Dame\\South Bend \\USA}
\email{rramkuma@nd.edu}

\title{Varieties of minimal degree in weighted projective space}


\maketitle

\begin{abstract} 
We initiate a study of varieties of minimal degree in weighted projective spaces. We call a weighted projective space $\mathbf{P}(w_0,\dots,w_n)$ divisible if $w_i \mid w_{i+1}$ for all $i$. We provide sharp bounds for when a non-degenerate subvariety of a divisible weighted projective space has minimal degree. We define a weighted notion of $1$-generic matrices and, in analogy with the classical theory, show that there is a theory of weighted determinantal scrolls. Moreover, we characterize precisely when these have minimal degree and determine their weighted $N_p$ properties, and tie this to two weighted notions of regularity. Finally, we propose conjectural bounds for more general weighted threefolds and pose several natural questions. Throughout, we highlight the differences between this theory and the classical case.
\end{abstract}

\section{Introduction}
A subvariety $X \subseteq \bfP^n$ of codimension $c$ that is not contained in a hyperplane satisfies
$\deg(X) \geq 1 + c$.
If equality is attained, then $X$ is called a \textbf{variety of minimal degree}. The celebrated results of del Pezzo and Bertini \cite{Bert23, DelPezzo, Eisenbud_Harris_minimal} provide a complete classification of these varieties: $X$ is either a quadric hypersurface, a rational normal scroll, or a cone over the Veronese surface $\bfP^2 \hookrightarrow \bfP^5$. Eisenbud and Goto \cite{EG84} showed that varieties of minimal degree are completely characterized by their homological properties; namely, $X$ is of minimal degree if and only if $I_X$ is $2$-regular. This syzygetic perspective culminated in the theory of small schemes, vastly generalizing the results of del Pezzo and Bertini \cite{EGHP06}.

These results fit into a larger program devoted to understanding the geometry of syzygies in projective space, beginning with Green's theorem on linear syzygies of smooth curves \cite{Green84a, Green84b}. A recent trend has focused on extending the ``geometry of syzygies'' to a multigraded setting, exchanging embeddings into $\bfP^n$ for embeddings into more general toric varieties, including weighted projective spaces. For some examples, see \cite{MacSmith04, HSS06, BES20, bruce2022bounds, Cobb2023Syzygies, Hanlon_Hicks_Lazarev_2024, Brown_Erman_2024, berkesch2025cellular, Cranton_Heller_2025}.
These extensions have seen applications beyond the realm of toric varieties; for example, the weighted Castelnuovo--Mumford regularity introduced in \cite{Ben04} has been applied to the study of group cohomology and invariant rings \cite{Symonds2010,Symonds2011}.

Of particular relevance to us are \cite{Brown_Erman_Positivity, Brown_Erman_Linear}, in which the authors develop analogues of Green’s $N_p$ conditions for weighted projective spaces $\bfP(w_0,\dots,w_n)$, discuss several notions of Castelnuovo-Mumford regularity in the weighted setting, and prove an analogue of Green's theorem for curves in $\bfP(1,\ldots, 1, m, \ldots, m)$. These results show that the interplay between geometry and syzygies remains quite robust in the weighted setting. We further explore these connections by initiating a study of minimal degree varieties in weighted projective spaces.
A theory of varieties of minimal degree in $\bfP(\bfw)$, beyond being interesting in its own right, could pave the way for a tighter connection between the geometry of subvarieties in $\bfP(\bfw)$ and their free resolutions. For instance, as suggested in \cite[Section 7.2]{Brown_Erman_Linear}, such a theory could lead toward a weighted analogue of the Green–Lazarsfeld gonality conjecture, relating the syzygies of a curve to its special embeddings into varieties of minimal degree.

The primary goal of this paper is to initiate a study of varieties of minimal degree in weighted projective space, including the development of a notion of weighted scrolls, and to explore the relationship between degree and syzygies of weighted projective varieties. To this end, we consider the following questions:
\begin{enumerate}[label=(Question \arabic*), leftmargin=*]
\item What is the minimal degree of a non-degenerate subvariety of a weighted projective space?
\item Which subvarieties attain this minimal degree?
\item Do minimal degree varieties in weighted projective space have simple syzygies?
\end{enumerate}

In answering these questions, we mainly focus on a class of weighted projective spaces where the weights satisfy a natural divisibility condition (though all preliminary results of \cref{section_preliminaries} hold in general).

\begin{definition}[see \Cref{setup}] \label[definition]{divisible_wps} A weighted projective space $\bfP(\bfw)$ is \textbf{divisible} if $w_i | w_{i+1}$ for each $i$. 
We can group the weights together and write $\bfP(\bfw) =  \bfP(m_0^{a_0}, m_1^{a_1}, \ldots, m_k^{a_k})$ with 
\begin{itemize}
\item $m_0 = 1$ and $a_0 \geq 2$,
\item $m_i < m_{i+1}$ and $m_i|m_{i+1}$.
\end{itemize}
\end{definition}

The reason for restricting to divisible weighted projective spaces is two-fold. From a practical standpoint, behavior of weighted projective spaces and their subschemes tends to depend heavily on what can at times be quite subtle numerics of the weights; making the problem tractable often requires some constraints on the divisibility relations that show up. Beyond this, we have mentioned that we are particularly motivated by \cite{Brown_Erman_Linear}, wherein the authors study weighted $N_p$ conditions for curves in projective spaces of the form $\bfP(1^{a_0}, m^{a_1})$. Divisible weighted projective spaces are a natural generalization of this setting. In \cref{section_threefold}, we leave the divisible setting and explore some of the phenomena that arise more generally.

In the case of divisible weighted projective spaces, we provide the following answer to Question 1.

\begin{manualtheorem}{A}[\Cref{prop_degree_bound} and \Cref{cone_degree}] \emph{
Let $\bfP(1^{a_0}, m_1^{a_1}, \dots, m_k^{a_k})$ be a divisible weighted projective space and  $X$ a non-degenerate subvariety of dimension $d \leq a_k$. Then 
\begin{align*}
\deg(X) \geq  \frac{1}{m_k^{d-1}}\left(a_0 -1+ \frac{a_1}{m_1} + \cdots + \frac{a_k}{m_k} + \frac{1-d}{m_k} \right)
\end{align*}
and this bound is sharp. Any variety attaining this bound is rational.}
\end{manualtheorem}
In fact, in \cref{prop_degree_bound} we prove a related bound without the requirement that $d\leq a_k$, at the expense of a definitive rationality statement.

The answer to Question 2 is more complicated. Varieties of minimal degree need not be arithmetically Cohen--Macaulay. Furthermore, there are in general infinitely many varieties of minimal degree up to projective equivalence. These pathologies are already present for curves in simple weighted projective spaces; $\bfP(1,1,2,2)$ contains a curve of minimal degree that is not arithmetically Cohen--Macaulay, and $\bfP(1^2,2^4)$ contains infinitely many projective equivalence classes of minimal degree curves (\Cref{three_curves_1} and \Cref{dimension_count}).

In projective space, every variety of minimal degree is cut out by the $2\times 2$ minors of a matrix of linear forms. In light of this and the challenges to the classification problem discussed above, our approach to Question (2) is to understand analogues of linear determinantal varieties for weighted projective spaces. To this end, we introduce what we believe to be a robust analogue of rational normal scrolls in the weighted setting. Our theory of weighted scrolls is rooted in the determinantal presentation in the classical case: we define a class of weighted projective varieties cut out by the minors of matrices satisfying a weighted version of the classical $1$-genericity condition.

\begin{manualdefinitions}{\ref{def_gen_entries} and \ref{def_1generic}}
Let $M$ be a matrix with homogeneous entries in $S(\mathbf{w}) = \kk[x_0,\ldots, x_n]$ where $w_i = \deg(x_i)$. Assume that all entries in a given column have the same degree. The \textbf{profile} of $M$ is the sequence of degrees of the entries in its first row. A \textbf{generalized row} of a matrix $M$ is a nontrivial $\kk$-linear combination of the rows of $M$, and a \textbf{generalized entry} is a nontrivial $\kk$-linear combination of the entries of a generalized row. We say that the matrix $M$ is \textbf{pseudo 1-generic} if the entries of any generalized row are a regular sequence in $S(\bfw)_{(x_0,\ldots, x_n)}$.  If $M$ is pseudo 1-generic and its profile can be obtained by deleting entries from $(w_0, \ldots, w_n)$, then we say $M$ is \textbf{1-generic}.
\end{manualdefinitions}

When $S$ is standard graded, pseudo 1-genericity is equivalent to the usual 1-genericity \cite[Section 6B]{eisenbud:syzygies}. In this case, all the entries of $M$ must be linear and pseudo 1-genericity is equivalent to the nonvanishing of any generalized entry. Just as in the classical case, the vanishing locus of the maximal minors of a $p\times q$ pseudo 1-generic matrix over $S$ is an arithmetically Cohen--Macaulay scheme of codimension $q - p + 1$ (\Cref{thm_CM}). However, we cannot ensure integrality in general (\Cref{surface_possibilities}).

\begin{manualdefinition}{\ref{def_weightedScroll}} \label{def:intro_scroll}
Let $M$ be a 1-generic matrix in $S(\mathbf{w})$ with two rows. 
If $X = V(I_2(M)) \subseteq \mathbf{P}(\mathbf{w})$ is integral we call $X$ 
a \textbf{(weighted) determinantal scroll}.  
\end{manualdefinition}

This definition is not restrictive; we show in \Cref{prime_is_generic} that if $X$ is integral and defined by the minors of a matrix $M$ with two rows, then $M$ must be pseudo 1-generic. Furthermore, for divisible spaces, one can always find 1-generic matrices. If an entry of the profile increases (for instance to be larger than a weight in the ambient weighted projective space), then the degree of the scroll increases as well; thus, if we are interested in varieties of minimal degree, 1-genericity is a natural condition.

In another departure from the classical case, we find that not all weighted determinantal scrolls have minimal degree. We classify those that do have minimal degree in terms of their profiles, and appeal to the Kronecker--Weierstrass characterization of matrix pencils to provide a structure theorem. 
For simplicity, we again restrict to $d \leq a_k$, the multiplicity of the largest weight.

\begin{manualtheorem}{B}[\Cref{thm_minDegStructure} and \Cref{cone_degree}] \emph{
Let $X$ be a $d$-dimensional determinantal scroll in a divisible weighted projective space and $M$ its associated $1$-generic matrix. 
\begin{enumerate} 
\item $X$ has minimal degree if and only if its profile is $$(1^{a_0-1},m_1^{a_1},\dots,m_{k-1}^{a_{k-1}},m_k^{a_k+1-d}).$$ 
\item Let $M_i$ denote the submatrix of $M$ whose entries have degree $m_i$. If $X$ has minimal degree, then, up to a change of coordinates,
\begin{itemize}
\item $M_0$ is a weighted scroll block (see \Cref{matrix_decomposition_notation}),
\item for  $1 \leq i \leq k-1$, $M_i$ is made up of Jordan blocks and at most one nilpotent block of size one,
\item $M_k$ is made up of Jordan blocks, scroll blocks, and at most one nilpotent block of size one.
\end{itemize}
\end{enumerate}}
\end{manualtheorem}
We actually prove something stronger in \Cref{thm_minDegStructure}; we show that if $X \subseteq \bfP(\bfw)$ is defined by a $1$-generic matrix with the above profile, then $X$ is integral. In particular, this is automatic when $\dim(X) =1$.
Restricting to dimension one, we get a more subtle analogue of a rational normal curve. Consider the weighted projective stack $\mcP(\bfw) := [(\bfA(\bfw)\setminus 0)/\mathbf{G}_m]$ with coarse space morphism $\pi: \mcP(\bfw) \to \bfP(\bfw)$.

\begin{manualtheorem}{C}[\Cref{cor_smooth_curves} and \Cref{stacky_parameterization}] \emph{
Let $\bfP(\bfw)$ be a divisible weighted projective space. Let $C $ be a curve cut out by the minors of a 1-generic matrix with two rows and let $\mathcal{C} = \pi^{-1}(C)$. 
 Then
\begin{enumerate}
\item $C$ is a curve of minimal degree,
\item $C$ is isomorphic to $\bfP^1$,
\item  $\mathcal{C}$ may be singular, and we get a ``global parameterization" of $\mathcal{C}$ and thus of $C$, via
$$
\mathcal{C}_{\mathrm{sm}} \to \mathcal{C} \to C,
$$
where $\mathcal{C}_{\mathrm{sm}}$ is a root stack over $\bfP^1$, corresponding to a desingularization of $\mathcal{C}$.
\end{enumerate}}
\end{manualtheorem}

In particular, while $C$ is isomorphic to $\bfP^1$, it is not always possible to write down a globally defined map from $\bfP^1$ to $\bfP(\bfw)$ whose image is $C$. By passing to the stack, we circumvent this issue and explicitly construct the map. However, this introduces a new issue: in general $\mathcal{C}$ may not be smooth, in which case our ``global parameterization'' is, at best, an injective map.

\begin{example} 
Consider the curves $C_1, C_2\subset \bfP(1,1,3,3,3)$ defined by the maximal minors of the $1$-generic matrices
\[
M_1 = \begin{pmatrix}
x_1 & y_1    & y_2 & y_3\\
x_2 & x_1^3 & y_1 & y_2
\end{pmatrix}
\text{ and } \,
M_2 = \begin{pmatrix}
x_1 & y_1 & y_2 & y_3\\
x_2 & x_1^3 &y_2 + x_1^3& -y_3+x_1^3
\end{pmatrix}.
\]
We can parameterize $C_1$ via the closed immersion $\bfP^1\to \bfP(1,1,3,3,3)$ given by
\begin{align*}
[s : t]&\mapsto [st : t^2 : s^4t^2: s^5t : s^6].
\end{align*}
On the other hand, we parameterize $C_2$ by 
\[
\mcC_\text{sm} = \sqrt[3]{(\bfP^1, [0:1])}\times_{\bfP^1}\sqrt[3]{(\bfP^1, [1:1])}\times_{\bfP^1}\sqrt[3]{\bfP^1, [1:-1]} \to \mathcal{P}(1,1,3,3,3)
\]
given by
\[
[s:t] \mapsto [suvw : tuvw : s^4(t-s)(t+s) : s^4t(t+s) : s^4t(t-s)]
\]
where $u^3 = t$, $v^3=(t-s)$, and $w^3 = (t+s)$. More precisely, $u, v,$  and $w$ are sections of the root line bundles $\sqrt[3]{\mcO_{\bfP}([0:1])}$, $\sqrt[3]{\mcO_{\bfP}([1:1])},$ and $\sqrt[3]{\mcO_{\bfP}([1:-1])}$ respectively on the stacky curve $\mcC_\text{sm}$. 
\end{example}

\begin{remark} Consider $\bfP(1^d,m^m)$, and let $M$ be a $1$-generic matrix of profile $(1^{d-1},m^m)$ having a single Jordan block with $\epsilon = 0$ in degree $m$. It is shown in \cite[Theorem A]{Davis_Sobieska} that $I_2(M)$ cuts out a smooth rational curve $C$, referred to as a type $(d, m)$ rational normal curve. This curve is defined as the image of a closed immersion $\bfP^1 \to C$ via a weighted linear series.
\end{remark}

We turn finally to Question 3 for determinantal weighted scrolls. Classically, the simplicity of syzygies is measured either via the Castelnuovo-Mumford regularity or the $N_p$ properties. In weighted projective space, there are several distinct notions of regularity, of which we focus on two: \textbf{weighted regularity} and \textbf{Koszul regularity} (see \Cref{section_regularity}). 
We explicitly compute the Betti numbers of the determinantal scrolls and analyze their weighted and Koszul regularity as well as their weighted $N_p$ properties (\Cref{def_wNk}). 

\begin{manualtheorem}{D}[\Cref{minDeg_minReg} and \Cref{cor_scrolls_wNk}] \emph{
Let $X$ be a $d$-dimensional determinantal scroll in a divisible weighted projective space. 
\begin{enumerate}
\item $X$ is a variety of minimal degree if and only if its weighted (resp. Koszul) regularity is minimal among all the $d$-dimensional determinantal scrolls.
\item $X$ is weighted $N_p$ for all $p \geq 0$.
\end{enumerate}}
\end{manualtheorem}

This result provides a notable juxtaposition to projective space, where the linearity of the resolution exactly captures whether a variety has minimal degree. In contrast, there is no reasonable notion of a “linear resolution” that detects minimal degree in the weighted setting. In particular, we can construct varieties whose resolutions are “Koszul linear” in the sense of \cite{Brown_Erman_Linear}, yet whose degree is much larger than the minimal possible. On the other hand, we find a closer relationship between the degree of a weighted projective variety and its regularity: for weighted determinantal scrolls, having minimal degree is equivalent to having minimal regularity (both Koszul and weighted). When the degree is not minimized or a variety is not Cohen--Macaulay, the nature of this relationship appears subtle; we explore this further in \cref{section_regularity} and the examples therein.

In \cref{section_threefold}, we explore non-divisible weighted projective spaces, leveraging our results for divisible spaces to obtain degree bounds even in the absence of divisibility. We work this out for curves in weighted projective threefolds $\bfP(1,1,m,n)$ for arbitrary $m,n$. We also study determinantal curves in a general $\bfP(w_0,w_1,w_2,w_3)$ and give an example of a weighted projective threefold in which a curve of minimal degree cannot be determinantal. Finally, we conclude with a number of natural follow-up questions.

\begin{acknowledgements}
We thank Izzet Coskun, David Eisenbud, Daniel Erman, and Andres Fernandez Herrero for helpful conversations. Ritvik Ramkumar was partially supported by NSF grant DMS-2401462. Maya Banks was partially supported by NSF RTG grant DMS-2037569.
\end{acknowledgements}

\section{Quasi-polynomials and degree of subschemes}\label{section_preliminaries}
We begin this section with some notation that will be fixed throughout the paper. 
We then introduce some preliminary definitions and results concerning Hilbert quasi-polynomials of subschemes of weighted projective space and describe how to use them to compute degree.

\begin{setup} \label[setup]{setup} The field $\kk$ is always assumed to be algebraically closed. We use $S$ to denote the standard graded polynomial ring $\kk[x_0,\dots,x_n]$. Given $\bfw =\{w_0, \dots, w_n\} \subseteq \mathbf{N}^{n+1}$ a sequence of weakly increasing positive integers, denote by $S(\bfw)$ the polynomial ring $\kk[x_0,\dots,x_n]$ graded by $\deg(x_i) = w_i$. We usually group common weights together and write $\bfw = (m_0^{a_0},m_1^{a_1},\dots,m_k^{a_k})$ with $m_i < m_{i+1}$. The subspace of polynomials in $S(\bfw)$ of degree $i$ is denoted by $S(\bfw)_i$. There are two instances, both clear from context, where we will use a different notation for the variables:
\begin{itemize}
\item If there are at most three distinct degrees, we use the variables $x_i,y_i,z_i$ and begin indexing the variables at $1$. For example, $S(1,1,2,2,2,3) = \kk[x_1,x_2,y_1,y_2,y_3,z_1]$ with $\deg(x_i) =1$, $\deg(y_i) = 2$ and $\deg(z_i) = 3$.
\item If $\bfw = (m_0^{a_0},m_1^{a_1},\dots,m_k^{a_k})$, let 
$S(\bfw) = \kk[x_{0,1}, \ldots, x_{0,a_0}, \ldots, x_{k,1},\dots, x_{k,a_k}]$
with $\deg(x_{i,j}) = m_i$.
\end{itemize}

We define $\mathbf{A}(\bfw) = \Spec(S(\bfw))$ and  $\bfP(\bfw) = \Proj(S(\bfw))$. The latter is \textbf{weighted projective space} and can alternatively be defined as the quotient $(\mathbf{A}^{n+1}\setminus0) /\mathbf{G}_m$ where $\mathbf{G}_m$ acts on $\mathbf{A}^{n+1}$ by $x \mapsto (t^{w_0},t^{w_1},\dots, t^{w_n})\cdot x$. The \textbf{weighted projective stack} is the quotient stack $\mathcal{P}(\bfw) = [(\mathbf{A}^{n+1}\setminus0) /\mathbf{G}_m]$.

A weighted projective space $\bfP(\bfw)$ is \textbf{well-formed} if the action of $\mathbf{G}_m$ has trivial stabilizers in codimension $1$, equivalently, $\gcd(w_0,\dots,\widehat{w_i},\dots,w_{n})=1$ for all $i$. Every weighted projective space is isomorphic to a well-formed one. All our weighted projective spaces are assumed to be well-formed.

For a subscheme $X \subseteq \bfP(\bfw)$, $I_X \subseteq S(\bfw)$ denotes its defining ideal and $HF_{X}(t)$ denotes the Hilbert function of its coordinate ring $S(\bfw)/I_X$. A subscheme $X \subseteq \bfP(\bfw)$ is said to be \textbf{arithmetically Cohen--Macaulay} if the coordinate ring $S(\bfw)/I_X$ is Cohen--Macaulay. A \textbf{subvariety} of $\bfP(\bfw)$ is a closed integral subscheme. For more on the theory of weighted projective spaces, see \cite{Dolgachev82, RB86}. 

\end{setup}

\begin{definition}
A closed subscheme $X \subseteq \bfP(\bfw)$ is \textbf{non-degenerate} (respectively \textbf{degenerate}) if the defining ideal $I_X$ is (resp. is not) contained in $(x_0,\dots,x_n)^2$.
\end{definition}

\begin{definition}
A \textbf{quasi-polynomial of degree $d$} is a function $Q: \mathbb{Z}\to \kk$ given by 
\[
Q(t) = c_d(t)t^d + c_{d-1}(t)t^{d-1} + \cdots + c_1(t)t + c_0(t)
\]
where $c_i: \mathbb{Z}\to \kk$ is periodic and $c_d\neq 0$. The \textbf{period} of $Q$ is the smallest positive integer $p$ such that $c_i(t) = c_i(t+mp)$ for all $i$ and all $t,m\in\mathbb{Z}$. For each $0 \leq j \leq p-1$, the \textbf{$j$-th strand} of $Q$ is the polynomial
$$
Q^j(t) = c_d(j)t^d + \cdots + c_1(j)t + c_0(j).
$$
The \textbf{$j^{th}$ leading term} is $LT_j = c_d(j)t^d$ and the \textbf{$j^{th}$ leading coefficient} is $LC_j = c_d(j)$.
\end{definition}

\begin{prop}[{\cite[Theorems 1 and 2]{Bruns_Ichim}}] \label[prop]{quasi_poly} If $X \subseteq \bfP(\mathbf{w})$ is a closed subscheme of dimension $d$, there is a quasi-polynomial $HQ_X(t)$ of degree $d$.
such that $HQ_X(t) = HF_X(t)$ for all $t \gg 0$. Moreover, 
\begin{enumerate}
\item the period of $HQ_X$ divides $\text{lcm}(w_0, \ldots, w_n)$, and
\item if $w_0 =1$ and $X$ is non-degenerate, then the leading coefficient of $HQ_X$ is constant.
\end{enumerate}
The polynomial $HQ_X$ is called the \textbf{Hilbert quasi-polynomial} of $X$. 
\end{prop}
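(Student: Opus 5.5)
We first show that the Hilbert function of $R=S(\bfw)/I_X$ agrees with a quasi-polynomial of degree $d$ in large degrees, and bound its period; then we show the leading coefficient is constant in the non-degenerate case with $w_0=1$.

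\textbf{Existence and period.} Let $L=\mathrm{lcm}(w_0,\dots,w_n)$. The monomials $x_i^{L/w_i}$ all have degree $L$, and $S(\bfw)$ is a finite module over the standard graded subring $T=\kk[x_0^{L/w_0},\dots,x_n^{L/w_n}]$. Its generators are the monomials with all exponents less than $L/w_i$. Hence $R$ is a finitely generated graded $T$-module. For each residue $j$ mod $L$, the strand $R^{(j)}=\bigoplus_{s} R_{j+sL}$ is a finitely generated graded module over $T$, where $T$ is standard graded after rescaling degrees by $L$. By Hilbert's theorem, $s\mapsto \dim_\kk R_{j+sL}$ is a polynomial in $s$ for $s\gg0$. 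Rewriting it as a polynomial in $t=j+sL$, these $L$ polynomials glue into a quasi-polynomial whose period divides $L$. This gives (1).

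\textbf{Degree.} The degree of each strand polynomial is $\dim R^{(j)}-1$, computed as a $T$-module. Since $R$ is integral over $T$ (more precisely, over the image of $T$), we have $\dim R^{(j)}\le \dim R=d+1$. Also $\dim R^{(0)}=d+1$, because the Veronese subring $R^{(0)}$ has $\mathrm{Proj}\,R^{(0)}\cong X$. So the quasi-polynomial has degree exactly $d$.

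\textbf{Constant leading coefficient.} This is the main step. Assume $w_0=1$ and $X$ non-degenerate. I would first argue that $x_0$ is not a zero divisor in top dimension, i.e.\ that $x_0$ does not vanish identically on any $d$-dimensional component of $X$.
\begin{itemize}
\item If $X$ is integral (the setting of interest), non-degeneracy gives $x_0\notin I_X$, since $x_0\notin(x_0,\dots,x_n)^2$. Because $I_X$ is prime, multiplication by $x_0$ is then injective on $R$.
\item In general, one reduces to the top-dimensional components using the associativity formula on each strand. Here I am uncertain whether non-degeneracy alone suffices for arbitrary schemes. I would handle this by noting that the leading coefficient only depends on the $d$-dimensional primary components. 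If needed, I would use that the statement is stated for $X$ with the hypothesis as in \cite{Bruns_Ichim}.
\end{itemize}

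Given injectivity of multiplication by $x_0$ (at least modulo lower-dimensional torsion), the maps $R_t\to R_{t+1}$ are injective. Thus $HF_X(t)\le HF_X(t+1)$ for $t\gg0$, up to an error of order $O(t^{d-1})$. Comparing strands $j$ and $j+1$ then gives $LC_j\le LC_{j+1}$. Going around the cycle of residues mod $L$ forces all $LC_j$ to be equal.

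I expect the injectivity and bookkeeping of lower-order error terms in the non-integral case to be the main obstacle. For integral $X$, the argument is immediate.
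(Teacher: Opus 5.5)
Your arguments for existence, for the period bound (1) and for the degree are the standard Veronese-strand argument over the standard-graded subring $T$, and they are correct. The paper gives no proof of its own and only cites Bruns--Ichim, so these parts are a self-contained substitute. For integral $X$ your proof of (2) is also complete. Non-degeneracy gives $x_0\notin I_X$, so $x_0$ is a nonzerodivisor on the domain $S(\bfw)/I_X$, and the chain $c_d(0)\le c_d(1)\le\cdots\le c_d(L-1)\le c_d(0)$ forces constancy.

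The step you left open, (2) for arbitrary closed subschemes, cannot be closed, and appealing to the citation does not rescue it: (2) is false for non-integral $X$.

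\emph{Counterexample.} In $\bfP(1,1,2,2)$ with coordinates $x_1,x_2,y_1,y_2$, take $I_X=(x_1,x_2)^2$. This ideal is $(x_1,x_2)$-primary, hence saturated, and it lies in $\mfm^2$. So $X$ is a non-degenerate, even irreducible, curve. The ring $S(\bfw)/I_X$ has $\kk$-basis $\{m,\ x_1m,\ x_2m\}$, with $m$ running over monomials in $y_1,y_2$. Hence $HF_X(t)=\tfrac t2+1$ for $t$ even and $HF_X(t)=t+1$ for $t$ odd, so the leading coefficient takes the two values $\tfrac12$ and $1$. The reduced scheme $V(x_1,x_2)\cup V(y_1,y_2)$, with ideal $(x_iy_j)\subseteq\mfm^2$, fails in the same way: its leading coefficients are $\tfrac32$ and $1$.

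\emph{What your method does prove.} Suppose no associated prime $P$ of $R=S(\bfw)/I_X$ with $\dim S(\bfw)/P=d+1$ contains all the weight-one variables. Choose a weight-one form $\ell$ outside every such $P$; this is possible since $\kk$ is infinite. Every associated prime of $K=(0:_R\ell)$ is an associated prime of $R$ containing $\ell$, so $\dim K\le d$. Therefore $HF_X(t+1)\ge HF_X(t)-\dim_\kk K_t$ with $\dim_\kk K_t=O(t^{d-1})$, and your cyclic comparison goes through. For integral non-degenerate $X$ this hypothesis holds with $\ell=x_0$. So (2) should be stated for subvarieties, or under this hypothesis, not for arbitrary closed subschemes.
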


\begin{definition} Let $X\subseteq \bfP(\mathbf{w})$ be a $d$-dimensional closed subscheme. The \textbf{degree of $X$} is 
$$
\deg(X) = (d!) LC_0(HQ_X).
$$
\end{definition}

\begin{remark} The leading coefficient of the Hilbert quasi-polynomial may not be constant in general. Consider a ring concentrated only in even degrees; for example, the coordinate ring of $X = V(x_1) \subseteq \bfP(1,2,2)$. In this case, the period of $HQ_X$ is two, and $Q^0(t) = \frac{1}{2}t+1$ and $Q^1(t) = 0$. 
By \cref{quasi_poly}, all subschemes that we will consider have Hilbert quasi-polynomials with constant leading coefficients.
\end{remark}

\begin{remark} We can also compute the degree by using an embedding into projective space.
Let $q = \text{lcm}(w_0,\dots,w_n)$ and choose $r >0$ so that $\mcO_{\bfP(\bfw)}(rq)$ is very ample. Let $\nu: \bfP(\bfw) \to \bfP^N$ be any embedding by a complete linear series $\mcO_{\bfP(\bfw)}(rq)$ (i.e. a Veronese embedding of $\bfP(\bfw)$).
Then we have 
$$
\deg(X) = \frac{\deg(v(X))}{(rq)^{d}},
$$
where the right hand side is the usual degree in projective space. 
This follows from the fact that $HF_{v(X)}(t) = HF_X((rq)t)$, and thus $HP_{v(X)}(t) = HQ_X((rq)t)$.
\end{remark}

Some embeddings in weighted projective space are given by sections of powers of a line bundle. For subschemes obtained thusly, we can compute the degree using the line bundle and sections in question.

\begin{definition}\label{def_weighted_series}
Let $X$ be a projective scheme and $\mcL$ a line bundle on $X$. We denote its $\mathbb{Z}$-graded section ring by  $\mcR_\mcL = \bigoplus_{i\geq 0}H^0(X,\mcL^i)$. A \textbf{weighted series} is a $\mathbb{Z}$-graded $\kk$-vector space $W \subseteq \mcR_\mcL$. The choice of a basis $s_0,\dots,s_n$ of $W$, where $s_i \in W_{w_i} \subseteq H^0(X,\mcL^{w_i})$, induces a rational map $\varphi_W:X \dashrightarrow \bfP(\bfw)$. If the intersection of the $V(s_i)$ is empty, then $W$ is said to be \textbf{basepoint-free} and the rational map $\varphi_W$ is a morphism.
\end{definition}

\begin{lemma}\label[lemma]{lemma_line_bundle_degree}
Let $X$ be a $d$-dimensional projective variety, $\mcL$ an ample line bundle on $X$ and $W$ a basepoint-free weighted series. Let $q$ be a multiple of $\text{lcm}(m_i : \dim_\kk(W_i)>0)$ such that $\mcL^{\otimes q}$ is very ample. Then the degree of $\varphi_W(X)$ is
\[
\lim_{i\to\infty}\frac{\deg(\mcL) \cdot\dim_\kk(\mcR_W)_{(iq)}}{\dim_\kk(\mcR_\mcL)_{(iq)}}
\]
\end{lemma}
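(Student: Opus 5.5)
The plan is to compare Hilbert functions along the subsequence of degrees divisible by $q$. There the weighted degree of $Y := \varphi_W(X)$ and the usual degree of $X$ under $\mcL^{\otimes q}$ can both be read off from leading coefficients. Here $\mcR_W$ denotes the graded $\kk$-subalgebra of $\mcR_\mcL$ generated by $W$. The map $S(\bfw) \to \mcR_\mcL$, $x_i \mapsto s_i$, is a graded homomorphism. Its kernel is $I_Y$: a homogeneous $f$ maps to zero iff $f(s_0,\dots,s_n)$ vanishes on $X$, iff $f$ vanishes on the image. Hence $S(\bfw)/I_Y \cong \mcR_W$ as graded rings, and $HF_Y(t) = \dim_\kk (\mcR_W)_t$.

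First step. By \cref{quasi_poly} and the definition of degree,
\[
HF_Y(iq) = HQ_Y(iq) \quad \text{for } i \gg 0 .
\]
Since $q$ is a multiple of the relevant lcm, hence of the period, the restriction $i \mapsto HQ_Y(iq)$ is the single strand $Q^0(iq)$. It is a polynomial in $i$ with leading term $LC_0\, q^d i^d = \deg(Y)\, q^d i^d/d!$.

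The dimension of $Y$ is $d$. Indeed $\varphi_W$ is a morphism from a projective variety, and it is finite: $\varphi_W^*\mcO(q)$ is a positive power of $\mcL$, which is ample, so no curve is contracted. I would verify this via the fact that $\mcO_{\bfP(\bfw)}(q)$ is a line bundle and $\varphi_W^*\mcO_{\bfP(\bfw)}(q) \cong \mcL^{\otimes q}$.

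Second step. We have $(\mcR_\mcL)_{iq} = H^0(X, (\mcL^{q})^{i})$. By asymptotic Riemann--Roch, or the usual Hilbert polynomial for the very ample $\mcL^q$ together with Serre vanishing,
\[
\dim_\kk (\mcR_\mcL)_{iq} = \frac{(\mcL^q)^d}{d!}\, i^d + O(i^{d-1}) = \frac{q^d \deg(\mcL)}{d!}\, i^d + O(i^{d-1}).
\]
Here $\deg(\mcL) = \mcL^d$.

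Taking the ratio, the limit equals
\[
\deg(\mcL) \cdot \frac{\deg(Y)\,q^d/d!}{\deg(\mcL)\, q^d/d!} = \deg(Y),
\]
which is the claimed formula. In the formula, $(\mcR_W)_{(iq)}$ is read as the graded piece of degree $iq$.

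The main obstacle is the first step's bookkeeping. I must ensure that $HF_Y$ at multiples of $q$ agrees with the $0$-th strand of $HQ_Y$, which requires the period to divide $q$. The hypothesis only gives $q$ divisible by the lcm of the weights actually occurring, and this is exactly what the period bound in \cref{quasi_poly} needs once one restricts to the variables with $\dim W_i > 0$. I must also confirm that $\dim Y = d$, so that the leading exponents on numerator and denominator match; otherwise the limit would be $0$ or infinite. The finiteness argument through the ampleness of the pullback of $\mcO(q)$ handles this.
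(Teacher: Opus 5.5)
Your proposal is correct and follows the paper's proof. Both identify $\dim_\kk(\mcR_W)_{iq}$ with the $0$-th strand of the Hilbert quasi-polynomial of $\varphi_W(X)$ and $\dim_\kk(\mcR_\mcL)_{iq}$ with the Hilbert polynomial of $\mcL^{\otimes q}$ in $i$, then compare leading coefficients; your extra checks (that $\mcR_W\cong S(\bfw)/I_Y$, that the period divides $q$, and that $\varphi_W$ is finite because $\varphi_W^*\mcO(q)\cong\mcL^{\otimes q}$ is ample, so $\dim\varphi_W(X)=d$) are points the paper uses only implicitly.
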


\begin{proof}
Note that $\dim_\kk(\mcR_W)_j$ is $HF_{\varphi_W(X)}(j)$ which is eventually equal to the Hilbert quasi-polynomial of the image of $X$. Similarly, $\dim_\kk(\mcR_\mcL)_j$ is eventually equal to the Hilbert polynomial of $\mcL$. Taking the limit we obtain
$$
\frac{\deg\mcL\cdot q^dLC_0(HQ_{\varphi_W(X)})}{q^dLC(HP_\mcL)}
= \frac{\deg\mcL\cdot q^d\frac{\deg \varphi_W(X)}{d!}}{q^d\frac{\deg\mcL}{d!}}
= \deg \varphi_W(X). \qedhere
$$
\end{proof}

The degree of a weighted projective variety exhibits behaviors not seen in the standard setting. Firstly, as the preceding lemma suggests, the degree of a variety (even a smooth one) need not be an integer; this is expected, however, given that weighted projective spaces are inherently singular. Secondly, taking a cone over a variety can alter its degree. Consequently, the same ideal viewed in different graded rings can define varieties with different degrees, as the next example illustrates.

\begin{example}
Consider the rational curve in $C \subseteq \bfP(1,1,2,2)$ defined by the weighted series $\{s^2, st, st^3, t^4\}\subset H^0(\bfP^1, \mcL)\oplus H^0(\bfP^1, \mcL^2)$ with $\mcL = \mcO(2)$. Using \cref{lemma_line_bundle_degree} we see that $C$ has degree 2.
The cone over $C$ in $\bfP(1,1,1,2,2)$ is a degree 2 surface while the cone over $C$ in $\bfP(1,1,2,2,2)$ is a degree $1$ surface.
\end{example}

\begin{definition}
Let $X\subset \bfP(\bfw)$ be a closed subscheme and let $\bfw'$ be a weight sequence obtained by appending a positive integer $m$ to $\bfw$. The \textbf{$m$-cone over $X$} in $\bfP(\bfw')$ is defined by $I_XS(\bfw')$.
\end{definition}
\begin{lemma}\label[lemma]{lemma_cone_degree}

Let $X\subset\bfP(\bfw)$ be a $d$-dimensional closed subscheme with Hilbert quasi-polynomial $HQ_X(t)$. 
If $X'$ is an $m$-cone over $X$, then 
\[
\deg(X') = \frac{d!}{q'}\sum_{\substack{0\leq j< q\\ \emph{\text{gcd}}(m,q)|j}} c_d(j)
\]
where $q = lcm(w_0, \ldots, w_n)$ and $q' = lcm(w_0, \ldots, w_n, m)$.
\end{lemma}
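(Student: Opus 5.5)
Write $R = S(\bfw)/I_X$, so that the coordinate ring of the $m$-cone $X'$ is
\[
R' = R \otimes_\kk \kk[y], \qquad \deg y = m.
\]
The plan is to compute the Hilbert quasi-polynomial of $X'$ directly from that of $X$ and then read off the leading coefficient on the $0$-th strand.

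\textbf{Step 1: a convolution formula.} Since $R'_t = \bigoplus_{i\ge 0} R_{t-im}\, y^i$, we have
\[
HF_{X'}(t) = \sum_{i\geq 0} HF_X(t - im).
\]
By \cref{quasi_poly}, $HF_X$ agrees with $HQ_X$ for large arguments. Write $HQ_X(s) = c_d(s)s^d + (\text{lower})$, where each $c_d$ has period dividing $q$. The finitely many terms with $t-im$ small contribute only a bounded amount. They therefore do not affect the coefficient of $t^{d+1}$.

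\textbf{Step 2: restrict to the strand $t = Nq'$.} By definition of degree, $\deg(X') = (d+1)!\,LC_0(HQ_{X'})$. So it suffices to find the asymptotics of $HF_{X'}(Nq')$ as $N\to\infty$, and this only involves $t \equiv 0 \pmod{q'}$.

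For such $t$, the residue class $j \equiv t - im \equiv -im \pmod q$ runs over the multiples of $g=\gcd(m,q)$ modulo $q$. As $i$ runs through one full period of length $q/g$, each residue $j$ with $g \mid j$ is hit exactly once. Grouping the $i$ into these blocks, we get
\[
HF_{X'}(Nq') \approx \sum_{i} c_d(-im \bmod q)\,(Nq'-im)^d .
\]
This is a Riemann-type sum. The average of $c_d(-im)$ over a period is
\[
\frac{g}{q}\sum_{g\mid j} c_d(j).
\]
Replacing the sum by an integral gives
\[
\frac{g}{q}\sum_{g\mid j}c_d(j)\cdot\frac{1}{m}\int_0^{Nq'} s^d\,ds
= \frac{g}{qm}\sum_{g\mid j}c_d(j)\,\frac{(Nq')^{d+1}}{d+1}.
\]
The error terms are $O(N^d)$, because $\sum_{i\le T} i^{d-1} = O(T^d)$ and the fluctuation of the periodic coefficient within a block is bounded.

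\textbf{Step 3: identify the leading coefficient.} From Step 2,
\[
LC_0(HQ_{X'}) = \frac{g}{qm(d+1)}\sum_{g\mid j}c_d(j).
\]
Since $\operatorname{lcm}(q,m) = qm/g$, we have $q' = qm/g$, so this equals $\dfrac{1}{q'(d+1)}\sum_{g\mid j} c_d(j)$. Multiplying by $(d+1)!$ gives
\[
\deg(X') = \frac{d!}{q'}\sum_{\substack{0\le j<q\\ g\mid j}} c_d(j),
\]
which is the claimed formula.

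\textbf{Main obstacle.} The delicate point is Step 2: justifying the equidistribution and Riemann-sum replacement with error $O(N^d)$ uniformly in the periodic coefficients. I would handle it by splitting the sum over $i$ according to $i \bmod (q/g)$. On each such subsequence the coefficient $c_d(t-im)$ is constant, and $\sum_{i\equiv r} (Nq'-im)^d$ is an honest polynomial sum with leading term $\frac{g}{qm}\frac{(Nq')^{d+1}}{d+1}$. This avoids any analytic estimates.
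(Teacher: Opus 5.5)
Your argument is correct, and it reaches the formula by the dual of the paper's mechanism.

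The paper never sums. It uses the exact identity $\Delta_h^{d+1}P=(d+1)!\,h^{d+1}\,\mathrm{LC}(P)$ with $h=q'$. It telescopes one $q'$-step of the cone relation $HQ_{X'}(s)-HQ_{X'}(s-m)=HQ_X(s)$ into the $q'/m$ shifted copies $HQ_X(t-im)$, $0\le i<q'/m$. It then applies $\Delta_{q'}^{d}$ to each copy, which returns $(q')^d d!\,c_d(-im)$ because the period of $HQ_X$ divides $q'$.

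You instead integrate. Splitting $i$ by its class modulo $q/g$ turns each piece into a fixed polynomial summed over an arithmetic progression of step $q'$; since $q/g=q'/m$, these classes are exactly the paper's shifts. You read off the leading term $(q')^dN^{d+1}/(d+1)$ of each piece from Faulhaber. The combinatorial heart is the same in both: $-im \bmod q$ meets each multiple of $g$ exactly once per block of $q/g$ consecutive $i$.

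The finite-difference route is shorter, because $\Delta_{q'}^{d+1}$ automatically disposes of two things you must handle by hand:
\begin{enumerate}
\item the lower-order terms of $HQ_X$, which contribute only $O(N^d)$;
\item the finitely many arguments where $HF_X\neq HQ_X$, which contribute a constant.
\end{enumerate}
In exchange, your computation makes transparent that $\deg(X')$ is an average of the strand leading coefficients of $HQ_X$. It also exhibits $HF_{X'}(Nq')$ directly as a polynomial in $N$.

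When you write it up, make the residue-class computation from your last paragraph the actual proof. Drop the heuristic Riemann-sum step with $\approx$.
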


\begin{proof}
We compute $\deg(X')$ from the Hilbert quasi-polynomial. 
For $t\gg 0$, the Hilbert functions of $X$ and $X'$ are related by
\begin{equation}\label{eq_cone_hqp}
HF_{X'}(t) = HF_X(t) + HF_X(t-m) + HF_X(t - 2m) + \cdots + HF_X(t - \lfloor t/m \rfloor m)
\end{equation}
The same relation holds for the Hilbert quasi-polynomials $HQ_X$ and $HQ_{X'}$. To compute the degree, we need only the leading coefficient of the $0$-th strand of $HQ_{X'}$, so we assume that $t$ is divisible by $q'$, which in turn implies that $t$ is divisible by $q $.

Now we use the method of finite differences to compute the leading coefficient of $HQ^0_{X'}(t)$. 
For a polynomial $P$, recursively define $\Delta^n_{h}P(t) = \Delta^{n-1}_h P(t) - \Delta^{n-1}_h P(t-h)$ with $\Delta^1_{h}P(t) = P(t) - P(t-h)$.
For a degree $n$ polynomial $P$ with leading coefficient $c_n$, we have $\Delta^n_h P(t) = c_n h^n n!$, which means that 
\begin{equation}\label{eq_findif_deg}
\deg(X') = \frac{\Delta^{d+1}_{q'}HQ^0_{X'}(t)}{(q')^{d+1}}.
\end{equation}
From \Cref{eq_cone_hqp}, we have 
\[
\Delta_{q'}^{d+1}HQ^0_{X'}(t) 
= \Delta_{q'}^d\Delta_{q'}^1HQ^0_{X'}(t)
= \Delta_{q'}^d\left(\sum_{j=0}^{q'/m-1}\Delta_{m}^1HQ^0_{X'}(t-jm)\right)
= \Delta_{q'}^d \left( \sum_{i=0}^{q'/m-1}HQ^0_X(t-im)\right).
\]
Because the period of $HQ_X(t)$ divides $q'$, we can rewrite this further as 
\[
\sum_{i=0}^{q'/m-1}\Delta^d_{q'}HQ_X(t-im) 
= \sum_{i=0}^{q'/m-1}(q')^d d!\ c_d(t-im)
= (q')^d d!\sum_{i=0}^{q'/m-1} c_d(im)
= (q')^d d! \sum_{\substack{0\leq j< q\\ \text{gcd}(m,q)|j}} c_d(j).
\]
The second equality follows from the fact that the period of $HQ_X$ divides $q'$, which in turn divides $t$. The third equality follows by counting, for each $j$ between $0$ and $q-1$, the number of $i$ for which $im \equiv j \bmod q$. Combining with \Cref{eq_findif_deg} yields the result.
\end{proof}

\begin{remark} \label[remark]{cone_rem}
When the leading coefficient of $HQ_X$ is constant (see \Cref{quasi_poly}), \Cref{lemma_cone_degree} relates the degree of $X'$ to the degree of $X$ in a natural way:
\[
\deg(X') = \left(\frac{1}{q'}\right)\left(\frac{q}{\gcd(m,q)}\right)\deg(X) = \left(\frac{1}{q'}\right)\left(\frac{\text{lcm}(m,q)}{m}\right)\deg(X) = \frac{1}{m}\deg(X).
\]
\end{remark}

Much like in the standard-graded case, we can read the degree from the Hilbert series by first reducing it to a specific form.

\begin{prop}\label[prop]{thm_degree} 
Let $X \subseteq \bfP(\bfw)$ be a closed subscheme of dimension $d$ such that the reduced Hilbert series is of the form  $\frac{P_X(t)}{\prod_{i=0}^{d}(1-t^{w'_i})}$  where $P_X(t)$ is a rational function with no pole at $t=1$ and $(w'_0,w'_1, \dots, w'_{d})$ is a subsequence of $\bfw$. 
Then 
$$
\deg(X) =   \frac{P_X(1)}{w'_{0}w'_1\cdots w'_{d}}.
$$
\end{prop}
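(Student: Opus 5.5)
Let $H_X(t)=\sum_{j\ge 0}HF_X(j)t^j$ denote the Hilbert series, so $H_X(t)=P_X(t)/\prod_{i=0}^d(1-t^{w'_i})$. The degree depends only on the $0$-th strand of $HQ_X$, so I will extract that strand's leading coefficient. Since $HF_X(j)=HQ_X(j)$ for $j\gg 0$, the difference $H_X(t)-\sum_j HQ_X(j)t^j$ is a Laurent polynomial. It therefore does not affect the pole structure at roots of unity, and I may reason with the quasi-polynomial directly.

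Let $q=\mathrm{lcm}(w_0,\dots,w_n)$. The plan is to isolate the $0$-th strand by averaging over $q$-th roots of unity $\zeta$:
\[
G(t^q)=\frac1q\sum_{\zeta^q=1}H_X(\zeta t)=\sum_{j}HF_X(qj)t^{qj}.
\]
For $j\gg0$, $HF_X(qj)=HQ^0_X(qj)$ is a polynomial in $j$ of degree $d$ with leading coefficient $c_d(0)q^d=\deg(X)q^d/d!$. Hence $G(s)$ has a pole of order exactly $d+1$ at $s=1$, and
\[
\lim_{s\to1}(1-s)^{d+1}G(s)=\deg(X)\,q^d.
\]
(This is the standard fact that $\sum \binom{j+d}{d}s^j=(1-s)^{-d-1}$.) Substituting $s=t^q$ and using $(1-t^q)\sim q(1-t)$ as $t\to1$, I get
\[
\lim_{t\to1}(1-t)^{d+1}G(t^q)=\frac{\deg(X)}{q}.
\]

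Next I compute the same limit from the averaged sum term by term. The $\zeta=1$ term contributes $\frac1q\lim_{t\to 1}(1-t)^{d+1}H_X(t)=\frac1q\cdot\frac{P_X(1)}{w'_0\cdots w'_d}$, since $(1-t^{w'_i})\sim w'_i(1-t)$ and $P_X$ has no pole at $1$. Comparing with the previous paragraph gives the claimed formula, provided every term with $\zeta\ne1$ has a pole of order at most $d$ at $t=1$, i.e.\ $H_X$ has a pole of order at most $d$ at each $\zeta\neq 1$.

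That last condition is the main obstacle, and it is where the hypotheses must enter. By the Remark after \Cref{quasi_poly}, the leading coefficient need not be constant in general; the example $V(x_1)\subseteq\bfP(1,2,2)$ has a full-order pole at $-1$, and there the formula genuinely fails. I would argue as follows. The order of the pole of $H_X$ at a root of unity $\zeta$ equals $1$ plus the degree of the component of $HQ_X$ attached to the character $j\mapsto\zeta^{-j}$, obtained by writing $c_d(j)=\sum_\zeta a_\zeta\zeta^{j}$. So a pole of order $d+1$ at $\zeta\neq1$ happens exactly when $c_d$ is nonconstant. Thus the needed input is that $c_d$ is constant, which is the situation stipulated in this paper via \Cref{quasi_poly}(2). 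If the hypothesis is intended literally as stated, I would instead check that the form of the denominator with $P_X$ regular at $1$ only constrains the behaviour at $t=1$. In that case I would state explicitly that one either uses constancy of $c_d$ or defines the degree via the $\zeta=1$ pole; with constant $c_d$ the two agree, and the proof is complete.
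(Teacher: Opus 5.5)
Your argument is correct. It runs on the same mechanism as the paper's proof: a root-of-unity filter isolates the degrees divisible by $q$, and only the trivial root should contribute to the order-$(d+1)$ pole at $t=1$. The paper packages this through a Veronese embedding $\nu$ by $\mathcal{O}(m)$ with $m=rq$. It writes $HS_{\nu(X)}(t)=HS_X^{(m)}(t^{1/m})$, evaluates the numerator at $1$ to get $\deg\nu(X)$ in $\bfP^N$, and divides by $m^d$. You instead read $c_d(0)$ directly off the pole of the filtered series. This avoids very ampleness and the classical degree formula. More importantly, it forces you to examine the summands with $\zeta\neq 1$ explicitly.

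That is exactly where the paper's computation is too quick.
\begin{itemize}
\item Its $j$-th summand should have denominator $\prod_i(1-\zeta^{jw'_i}t^{w'_i/m})$, not $\prod_i(1-\zeta^{j}t^{w'_i/m})$.
\item It asserts that every $j\neq 0$ summand vanishes at $t=1$, via $\prod_{\ell\neq j}(1-\zeta^\ell)=0$. This holds precisely when $HS_X$ has no pole of order $d+1$ at $\zeta^j$.
\item That can fail, for instance when $m\mid jw'_i$ for all $i$ and $P_X(\zeta^j)\neq 0$. It can also fail when a pole of $P_X$ at $\zeta^j$ raises the total order to $d+1$.
\end{itemize}
Your Fourier argument shows that all these summands vanish if and only if $c_d$ is constant. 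In general, $P_X(1)/(w'_0\cdots w'_d)$ equals $d!$ times the average of $c_d$ over a period, rather than $d!\,c_d(0)$.

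So you are right that the statement needs the constant-leading-coefficient hypothesis. The paper tacitly assumes it: it remarks that all subschemes it considers have constant leading coefficient via \Cref{quasi_poly}(2).

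Your counterexample is valid, and the phenomenon survives in a well-formed space. Take $X=V(x_1,x_2)\subseteq\bfP(1,1,2,2)$, so $HS_X=1/(1-t^2)^2$. The formula with $w'=(2,2)$ and $P_X=1$ gives $1/4$. But $HF_X(t)=t/2+1$ for even $t$ and $0$ for odd $t$, so $\deg X=1/2$.
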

\begin{proof} Let $d$ be the dimension of $X$, let $q = \operatorname{lcm}(w_0, \ldots, w_n)$, and choose $r \gg 0$ such that  $\mcO_{\bfP(\bfw)}(rq)$ is very ample.  Let $m = rq$ and let $\nu: \bfP(\bfw) \to \bfP^N$ denote the corresponding Veronese embedding via $\mcO_{\bfP(\bfw)}(rq)$. 
Let $HF_X(z)$ and $HS_X(t)$ be the Hilbert function and Hilbert series of $X$, respectively, with the Hilbert function and series of $\nu(X)$ denoted similarly. 
We write $HS^{(m)}(t)$ for the series consisting of only the terms of $HS(t)$ where the power of $t$ is a multiple of $m$. 

Note that, as rational functions of $t$, we can write $HS^{(m)}(t) = \frac{1}{m}\sum_{j=0}^{m-1}HS(\zeta^jt)$ where $\zeta$ is a primitive $m^{th}$ root of unity (the root of unity filter). 
Since $HF_{\nu(X)}(z) = HF_X(mz)$, we have $HS_{\nu(X)}(t) = HS^{(m)}_X(t^{1/m})$.
In particular, we have 
\begin{align*}
P_{\nu(X)}(t) &= 
	\frac{(1-t)^{d+1}}{m}\sum_{j=0}^{m-1}\frac{P_X(\zeta^jt^{1/m})}{\prod_{i=0}^{d}(1-\zeta^jt^{w'_i/m})}
	   \\ &= 
	\frac{(1-t)^{d+1}}{m}\sum_{j=0}^{m-1}\frac{P_X(\zeta^jt^{1/m})\prod_{i=0}^{d}\prod_{\ell \ne j}(1-\zeta^{\ell}t^{w'_i/m})}		{\prod_{i=0}^{d}\prod_{\ell=0}^{m-1}(1-\zeta^{\ell}t^{w'_i/m})}\\	
	  & = 
	\frac{(1-t)^{d+1}}{m}\sum_{j=0}^{m-1}\frac{P_X(\zeta^jt^{1/m})\prod_{i=0}^{d}\prod_{\ell \ne j}(1-\zeta^{\ell}t^{w'_i/m})}{\prod_{i=0}^{d}(1-t^{w'_i})} \\
 	& = 
	\frac{1}{m}\sum_{j=0}^{m-1}\frac{P_X(\zeta^jt^{1/m})\prod_{i=0}^{d}\prod_{\ell \ne j}(1-\zeta^{\ell}t^{w'_i/m})}{\prod_{i=0}^{d}(1+ \cdots + t^{w'_i-1})}.
\end{align*}
Finally, setting $t=1$ and using the fact that $\prod_{\ell=1}^{m-1}(1-\zeta^\ell) = m$ and $\prod_{\ell \ne j}(1-\zeta^{\ell}) = 0$ if $j \ne 0$, we obtain
$$
\deg(X) = \frac{\deg(\nu(X))}{m^{d}} = \frac{P_Y(1)}{m^{d}} 
	= \left(\frac{1}{m}\frac{P_X(1)m^{d+1}}{w'_0w'_1\cdots w'_{d}}\right) \frac{1}{m^{d}} = \frac{P_X(1)}{w'_0w'_1\cdots w'_{d}}.
\qedhere $$
\end{proof}

\begin{remark}
Note that we can always (nonuniquely) reduce the Hilbert series to the necessary form. The Hilbert series of a $(d+1)$-dimensional module $M$ over $S(\bfw)$ can always be written as a rational function
\[
\frac{P'(t)}{\prod_{i=0}^n(1-t^{w_i})}
\]
which has a pole of order $d+1$ at $t=1$. Thus the numerator is divisible by $(1-t)^{n-d}$, and we may reduce the Hilbert series by cancelling out $n-d$ factors of $(1-t)$ from the denominator however we please. For instance, we can write
\[
HS_M(t) = \frac{P''(t)}{\prod_{i=1}^{n-d}(1 + t + \ldots + t^{w_i-1}) \prod_{i=n-d+1}^n(1-t^{w_i})},
\]
taking the rational function $P(t)$ in the lemma to be 
\[
P(t) = \frac{P''(t)}{\prod_{i=1}^{n-d}(1 + t + \ldots + t^{w_i-1})}.
\]
\end{remark}

\begin{example}  Let $\bfw=(2,3,5,5)$ and consider the graded ring $S(\bfw) = \kk[x,y,z_1,z_2]$. The ideal
$
(z_1- z_2 + xy, x^3z_1 + 2y^2z_1 + x^3z_2 - y^2z_2 )
$
defines an integral curve $X$ with  Hilbert series
$$
\frac{1-t^{11}}{(1-t^5)(1-t^3)(1-t^2)} = \frac{1+t+\cdots+t^{10}}{(1-t^5)(1-t^3)(1+t)}.
$$ 
Letting $P_X(t) =  \frac{1+t+\cdots + t^{10}}{t+1}$, we see that $P(1) = \frac{11}{2}$. By \Cref{thm_degree} we get that the degree of $X$ is $\frac{11}{30}$. 

Unlike the standard graded case, the choice of $P_X(t)$ is not unique. For instance, we could also reduce the Hilbert series as
\[
\frac{1+t+\cdots+t^{10}}{(1-t^3)(1-t^2)(1+t + t^2 + t^3 + t^4)}.
\]
In this case we take $P_X(t) =  \frac{1+t+\cdots + t^{10}}{1+t + t^2 + t^3 + t^4}$, which gives $P_X(t) = \frac{11}{5}$. Once again, we see that the degree of $X$ is $\frac{11}{30}$.
\end{example}

\section{Degree bounds in divisible weighted projective spaces}
In this section, we provide bounds for the degree of a non-degenerate subvariety of a divisible weighted projective space (\Cref{divisible_wps}). We also show how the classification of varieties of minimal degree differs from the classical case. We begin with some useful lemmas.

\begin{lemma} \label[lemma]{lemma_reducible} Let $\bfP(V) \subseteq \bfP(S_d)$ be a linear system of degree $d$ hypersurfaces in $\bfP^n$. If $\codim(\bfP(V)) \leq dn$, then $\bfP(V)$ contains a hypersurface that is a union of (possibly non-reduced) hyperplanes. 
\end{lemma}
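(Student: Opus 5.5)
Consider the variety $Z \subseteq \bfP(S_d)$ of degree $d$ hypersurfaces that are unions of $d$ (possibly repeated) hyperplanes. It is the image of the multiplication morphism
\[
\mu:\ (\bfP^{n})^{\vee}\times\cdots\times(\bfP^{n})^{\vee}\ (d\text{ factors}) \longrightarrow \bfP(S_d),\qquad (\ell_1,\dots,\ell_d)\mapsto \ell_1\cdots\ell_d .
\]
The plan is to show that $Z$ is a closed subvariety of dimension exactly $dn$, and then intersect it with the linear space $\bfP(V)$.

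First, $Z$ is closed because $\mu$ is a morphism from a projective variety. Next, $\mu$ has finite fibres, since a product of linear forms determines its factors up to ordering and scalars by unique factorization in $S$. Hence
\[
\dim Z = \dim\bigl((\bfP^n)^{d}\bigr) = dn .
\]
In fact $Z$ is the $d$-th Chow variety of hyperplanes, which is irreducible of dimension $dn$.

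Now apply the projective dimension theorem in $\bfP(S_d)$. If $\codim \bfP(V) \leq dn = \dim Z$, then
\[
\dim\bigl(\bfP(V)\cap Z\bigr) \;\geq\; \dim \bfP(V) + \dim Z - \dim \bfP(S_d) \;=\; dn - \codim\bfP(V) \;\geq\; 0 .
\]
So $\bfP(V)\cap Z$ is nonempty, because two closed subvarieties of projective space whose dimensions sum to at least the ambient dimension must meet. Any point of this intersection is a hypersurface in $\bfP(V)$ of the form $V(\ell_1\cdots\ell_d)$, which is a union of possibly repeated hyperplanes. This proves the lemma.

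The only step that needs care is the dimension count for $Z$, specifically the finiteness of the fibres of $\mu$. This follows directly from unique factorization in $S$, so I expect no real obstacle.
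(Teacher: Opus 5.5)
Your proposal is correct and follows the paper's proof: the paper also uses the multiplication map $\bfP(S_1)^d \to \bfP(S_d)$, notes that it has finite fibres so its image is closed of dimension $nd$, and concludes that this image meets $\bfP(V)$. You additionally spell out the two steps the paper leaves implicit, namely unique factorization (for finiteness of the fibres) and the projective dimension theorem (for non-emptiness of the intersection).
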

\begin{proof} Let $\psi:\bfP(S_1)^d \to \bfP(S_d)$ denote the map $(g_1,g_2,\dots,g_d) \mapsto g_1g_2\cdots g_d$ and $\text{im}(\psi)$ the closure of the image. We need to show that $\bfP(V) \cap \text{im}(\psi) \ne \emptyset$. Since $\psi$ has finite fibers, the image of $\psi$ is a closed subscheme of dimension $nd$. The result follows. 
\end{proof}

\begin{lemma} \label[lemma]{lcm_very_ample} Let $\mathbf{P}(\bfw)=\bfP(1^{a_0},m_1^{a_1},\dots,m_k^{a_k})$ be a divisible weighted projective space. Then the line bundle $\mcO(m_k)$ is very ample on $\bfP(\bfw)$.
\end{lemma}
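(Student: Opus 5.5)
To show $\mcO(m_k)$ is very ample on the divisible space $\bfP(\bfw)=\bfP(1^{a_0},m_1^{a_1},\dots,m_k^{a_k})$, we will show the \emph{affine-chart criterion}. For each variable $x$ of weight $w$ we have the standard open set
\[
D_+(x)=\Spec S(\bfw)_{(x)}.
\]
It suffices to check two things for every variable $x=x_{i,j}$:
\begin{enumerate}
\item the monomials of degree $m_k$ generate $\mcO(m_k)$ globally;
\item on each chart where some degree-$m_k$ monomial $u$ is invertible, the ring $S(\bfw)_{(u)}$ is generated by the ratios $v/u$, where $v$ ranges over monomials of degree $m_k$.
\end{enumerate}
This is the usual statement that $\Proj S(\bfw)\cong \Proj S(\bfw)^{(m_k)}$ embeds via $S(\bfw)_{m_k}$ exactly when the Veronese subring $S(\bfw)^{(m_k)}$ is generated in degree one, up to saturation. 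So the cleaner plan is to prove directly that $S(\bfw)^{(m_k)}=\bigoplus_r S(\bfw)_{rm_k}$ is generated as a $\kk$-algebra by $S(\bfw)_{m_k}$. We also note that $\mcO(m_k)$ is a genuine line bundle, since $m_k$ is divisible by every weight (by divisibility, $m_k=\mathrm{lcm}(\bfw)$). Hence $\mcO(rm_k)=\mcO(m_k)^{\otimes r}$, and generation in degree one of the Veronese ring gives a closed embedding of $\bfP(\bfw)=\Proj S(\bfw)^{(m_k)}$ into $\bfP(S(\bfw)_{m_k})$.

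The key step is combinatorial. Let $f$ be a monomial of degree $rm_k$ with $r\ge 2$; we must factor $f=gh$ with $\deg g=m_k$. We sort the variables of $f$ by decreasing weight and greedily take variables into $g$ until the running degree reaches $m_k$. We claim the running sum never skips over $m_k$. Suppose the running sum is $s<m_k$ and the next variable has weight $m_i$. Every variable already chosen has weight a multiple of $m_i$, because weights are taken in decreasing order and $m_i\mid m_j$ whenever $j\ge i$. Therefore $m_i\mid s$, and also $m_i\mid m_k$, so $s+m_i\le m_k$. Since the total degree of $f$ is $rm_k\ge m_k$, the process does reach exactly $m_k$. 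The complementary factor $h$ then has degree $(r-1)m_k$, and induction on $r$ finishes the argument.

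The main obstacle is the passage from ``the Veronese ring is generated in degree one'' to ``$\mcO(m_k)$ is very ample'', i.e.\ matching the sheaf $\mcO(m_k)$ on $\Proj S(\bfw)$ with $\mcO(1)$ on $\Proj S(\bfw)^{(m_k)}$. This uses the standard fact $\Proj S\cong\Proj S^{(e)}$ together with the compatibility $\mcO_{\Proj S}(e)\cong \mcO_{\Proj S^{(e)}}(1)$. That compatibility holds here because every variable's weight divides $e=m_k$: each $x_{i,j}^{m_k/m_i}$ lies in degree $e$, and these elements cover all charts, so the sheaf is invertible and the identification is valid.
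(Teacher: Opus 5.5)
Your proposal is correct and follows essentially the paper's route: very ampleness is reduced to the surjectivity of $\Sym^r S(\bfw)_{m_k}\to S(\bfw)_{rm_k}$, shown by splitting a degree-$m_k$ monomial off every monomial of degree $rm_k$. The paper phrases this as projective normality, using that $S(\bfw)$ is normal, while you phrase it through $\Proj S(\bfw)\cong \Proj S(\bfw)^{(m_k)}$; these are the same reduction. The only real difference is the combinatorial step. The paper inducts on the number $k$ of distinct weights, whereas you choose variables greedily in decreasing weight order, and the divisibility chain keeps the partial sum from overshooting $m_k$. Both arguments are valid, and yours is slightly more direct.
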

\begin{proof}
We will prove the stronger statement that $\mcO(m_k)$ is projectively normal, meaning a complete linear series defines a projectively normal variety. Since $S(\bfw)$ is normal, we need to show that $\text{Sym}^r(S(\bfw)_{m_k}) \to S(\bfw)_{rm_k}$ is surjective for  $r \geq 1$.

We proceed by induction on $k$, the number of distinct weights greater than $1$ in $\bfw$. When $k=0$, the line bundle is $\mcO(1)$ on $\bfP^{a_0-1}$, which is clearly projectively normal. Assume  $\mcO(m_{k-1})$ is projectively normal. To prove that $\mcO(m_k)$ is projectively normal, we must show that any monomial in $S(1^{a_0},m_1^{a_1},\dots,m_k^{a_k})$ of degree $rm_k$ with $r>1$ factors as a product of a degree $m_k$ monomial and a degree $(r-1)m_k$ monomial. Suppose $g = \prod_{i=0}^k\prod_{j = 1}^{a_i}x_{i,j}^{b_{i,j}}$ is such a monomial. If $b_{k,j}$ is nonzero for any $j$, then the degree $m_k$ variable $x_{k,j}$ may be factored out and we are done. If $b_{k,j} = 0$ for all $j$, then we have a degree $rm_k = rr'm_{k-1}$ monomial in $S(1^{a_0},m_1^{a_1},\dots,m_{k-1}^{a_{k-1}})$ which, by induction, factors completely into degree $m_{k-1}$ monomials, any $r'$ of which give a degree $m_k$ monomial that can be factored out of $g$.
\end{proof}

\begin{remark} \label[remark]{m_1va} 
Assume $\bfP(\bfw)$ is a divisible weighted projective space. Using the monomials of degree $m_1$, we can perform an $m_1$-th Veronese embedding on the weight $1$ variables to obtain the map
$$
\psi:\bfP(1^{a_0},m_1^{a_1},\dots,m_k^{a_k}) \to \bfP\left(1^{\binom{a_0-1 + m_1}{m_1}+a_1}, \left(\frac{m_2}{m_1}\right)^{a_2}, \dots,\left(\frac{m_k}{m_1}\right)^{a_k}\right).
$$
Note that the closed immersion given by $\mcO(m_k)$ (from \Cref{lcm_very_ample}) factors through $\psi$. Since the composition is a closed immersion, it follows that $\psi$ itself is a closed immersion.
\end{remark}

\begin{prop} \label{prop_degree_bound} Let $X$ be a non-degenerate subvariety of dimension $d$ in a divisible weighted projective space $\bfP(1^{a_0},m_1^{a_1},\dots,m_k^{a_k})$. Then 
\begin{align} \label{degree_bound_divisible}
\deg(X) \geq  \frac{1}{m_k^{d-1}}\left(a_0 -1+ \frac{a_1}{m_1} + \cdots + \frac{a_k}{m_k} + \frac{1-d}{m_k} \right).
\end{align}
Furthermore, let $i$ be the smallest index such that $d> \sum_{j=i+1}^k a_j$. If $i < k$, the bound can be improved to
\small\begin{align} \label{degree_bound_divisible_2}
\deg(X) \geq  
 \frac{1}{m_i^{d-\sum_{j=i+1}^ka_j - 1}m_{i+1}^{a_{i+1}}\cdots m_k^{a_k}}\left(a_0 -1+ \frac{a_1}{m_1} + \cdots + \frac{a_i}{m_i} + \frac{1+\left(\sum_{j=i+1}^ka_j-d\right)}{m_i} \right).
\end{align}  
\end{prop}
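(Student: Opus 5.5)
The plan is to induct on $d$ by cutting with general hypersurfaces of the largest available weight, and to reduce the base case of curves to the classical ``points of a general hyperplane section span the hyperplane'' argument via the Veronese-type embedding of \Cref{lcm_very_ample}.

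\emph{Inductive step.} Let $X$ have dimension $d\ge 2$ and suppose $a_k\ge 1$. Take a general element $H=V(x_{k,a_k}-f)$ of $|\mcO(m_k)|$, where $f$ is a general form of degree $m_k$ in the remaining variables. Since $\mcO(m_k)$ is very ample (\Cref{lcm_very_ample}), Bertini should make $X\cap H$ integral of dimension $d-1$. Eliminating $x_{k,a_k}$ identifies $X\cap H$ with a subvariety of $\bfP(1^{a_0},\dots,m_k^{a_k-1})$.

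I would then check two things.
\begin{enumerate}
\item Non-degeneracy is preserved. A linear relation among the remaining variables modulo $I_X+(x_{k,a_k}-f)$ would lift to an element of $I_X$ with a nonzero linear term. Generality of $f$ and the fact that $x_{k,a_k}-f$ is a nonzerodivisor on $S(\bfw)/I_X$ should rule this out.
\item Degrees satisfy $\deg(X\cap H)=m_k\deg(X)$. This follows from the Hilbert series: it is multiplied by $(1-t^{m_k})$, and one factor $(1-t^{m_k})$ drops from the denominator in \Cref{thm_degree}.
\end{enumerate}
The induction hypothesis for $(d-1, a_k-1)$ then gives
\[
\deg X\ge \frac{1}{m_k}\cdot\frac{1}{m_k^{d-2}}\Bigl(a_0-1+\tfrac{a_1}{m_1}+\cdots+\tfrac{a_k-1}{m_k}+\tfrac{2-d}{m_k}\Bigr),
\]
which is exactly \eqref{degree_bound_divisible}.

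If $d>a_k$, the weight-$m_k$ variables run out after $a_k$ cuts. Each such cut contributes a factor $m_k$, and I continue cutting with weight-$m_{k-1}$ forms, and so on down to the index $i$. This produces the denominator $m_i^{d-\sum_{j>i}a_j-1}m_{i+1}^{a_{i+1}}\cdots m_k^{a_k}$ and the bracket in \eqref{degree_bound_divisible_2}. The weaker bound \eqref{degree_bound_divisible} in this range should then follow by comparing the two expressions, since $m_i\le m_k$.

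\emph{Base case $d=1$.} Here the bound reads $\deg X\ge a_0-1+\sum_i a_i/m_i$. Let $\nu$ be the embedding by $\mcO(m_k)$. The linear span of $\nu(X)$ has dimension $HF_X(m_k)-1$, and $\deg\nu(X)=m_k\deg X$. A general hyperplane section consists of $m_k\deg X$ points which, by the classical argument for integral curves, span a hyperplane of that span. This gives
\[
m_k\deg X\ge HF_X(m_k)-1.
\]
It remains to show
\[
HF_X(m_k)\ge 1+m_k(a_0-1)+\sum_{i\ge1}a_i\frac{m_k}{m_i}.
\]
For this I would use that a general linear form $\ell$ is a nonzerodivisor on the domain $S(\bfw)/I_X$. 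Then $HF_X(t)-HF_X(t-1)=\dim A_t$, where $A=S(\bfw)/(I_X,\ell)$, and I would show $\dim A_t\ge a_0-1+\#\{(i,j):i\ge1,\ m_i\mid t\}$ for $1\le t\le m_k$.

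\emph{Main obstacle.} The step I expect to be hardest is this lower bound on $\dim A_t$. The idea is that the images of $y$, $x_{i,j}$ and suitable products (a linear form of degree 1 times a power of a weight-$m_i$ form, and so on) stay linearly independent in $A$. Proving this needs a Castelnuovo-type general-position statement for the points $X\cap V(\ell)$. Here I would try \Cref{lemma_reducible}: if some degree-$t$ piece imposed too few conditions, the relevant linear system would contain a product of linear forms, and that product would vanish on $X$. Integrality of $X$ would then force one of the factors to vanish on $X$, contradicting non-degeneracy.

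Sharpness is not part of this proposition; it comes from the scroll constructions later in the paper.
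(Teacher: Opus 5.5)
\textbf{The gap is in your base case.} Two parts of it are fine: the reduction $m_k\deg X\ge HF_X(m_k)-1$, and the target inequality $HF_X(m_k)\ge 1+m_k(a_0-1)+\sum_{i\ge1}a_im_k/m_i$, which is true. But you plan to prove the target degree by degree, via $\dim A_t\ge a_0-1+\#\{(i,j):m_i\mid t\}$ for $1\le t\le m_k$, and that estimate is false. So no general-position argument can establish it.

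Here is a counterexample. Let $X\subset\bfP(1,1,2,2,4)$ be the image of $[u:v]\mapsto[u^4:v^4:u^6v^2:u^2v^6:u^9v^7]$. Its coordinate ring is $\kk[u^4,v^4,u^6v^2,u^2v^6,u^9v^7]$, graded by one quarter of the $(u,v)$-degree.
\begin{itemize}
\item $X$ is integral, being the image of $\bfP^1$.
\item $X$ is non-degenerate. In degree $2$ the $y_j$ have exponents $\equiv 2\pmod 4$, while quadrics in the $x_j$ have exponents divisible by $4$. In degree $4$ the monomial $u^9v^7$ has odd exponents, while every other degree-$4$ product has even ones.
\item Counting monomials gives $HF_X(t)=1,2,5,7,10$ for $t=0,\dots,4$.
\end{itemize}
Hence $\dim A_4=3$, while your claim demands $a_0-1+a_1+a_2=4$. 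The total still works out because $\dim A_3=2$ exceeds its predicted value $1$. The underlying reason is that the projection of $X$ to $\bfP(1,1,2,2)$ is the non-ACM curve $C_0$ of \Cref{three_curves_1}. The Hilbert function of $C_0$ grows only by $\deg C_0=2$ from degree $3$ to degree $4$.

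\textbf{The missing idea is a cumulative estimate, obtained by climbing through the weights.} Put $B=S(\bfw)/I_X$, $h_i=\dim_\kk B_{m_i}$ (so $h_0=a_0$), and $\rho=m_i/m_{i-1}$.
\begin{itemize}
\item By divisibility, every degree-$m_i$ monomial in variables of weight less than $m_i$ is a product of $\rho$ monomials of degree $m_{i-1}$. So $B_{m_i}=U+\langle x_{i,1},\dots,x_{i,a_i}\rangle$, where $U$ is the image of $\Sym^\rho(B_{m_{i-1}})$.
\item Non-degeneracy makes the $x_{i,j}$ independent modulo $U$.
\item Since $B$ is a domain, the kernel of $\Sym^\rho(B_{m_{i-1}})\to U$ contains no product of nonzero linear forms. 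So \Cref{lemma_reducible} forces $\dim U\ge\rho(h_{i-1}-1)+1$.
\end{itemize}
Thus $h_i-1\ge\rho(h_{i-1}-1)+a_i$, and iterating gives your inequality. This is exactly what the paper's induction on $k$ through the $m_1$-th Veronese map $\psi$ accomplishes.

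With this estimate you need not cut down at all. $\nu(X)$ is a non-degenerate $d$-fold in $\bfP^{h_k-1}$, so $m_k^d\deg X\ge h_k-d$, which is \eqref{degree_bound_divisible} for every $d$. You then no longer need to derive \eqref{degree_bound_divisible} from \eqref{degree_bound_divisible_2} when $d>a_k$. That comparison is true, but it needs a computation using $d\le\dim\bfP(\bfw)$, not just $m_i\le m_k$.

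Hyperplane sections are then only needed for \eqref{degree_bound_divisible_2}, as in the paper. There, non-degeneracy of $X\cap H$ concerns the saturation of $I_X+(x_{k,a_k}-f)$, not that ideal itself. So it should be deduced from the fact that hyperplane sections of the integral variety $\nu(X)$ span the hyperplane, rather than by lifting relations.
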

\begin{proof} 
We proceed by induction on $k$, the number of distinct weights greater than $1$. The base case of $k=0$ is classical; see \cite{Eisenbud_Harris_minimal} for a modern proof. Now assume $k\geq 1$, and let $X$ be a non-degenerate $d$-dimensional subvariety not satisfying \eqref{degree_bound_divisible}.
As in \Cref{m_1va}, consider the $m_1$-th Veronese embedding of the degree $1$-piece
$$
\psi:\bfP(1^{a_0},m_1^{a_1},\dots,m_k^{a_k}) \to \bfP\left(1^{\binom{a_0-1 + m_1}{m_1}+a_1}, \left(\frac{m_2}{m_1}\right)^{a_2}, \dots,\left(\frac{m_k}{m_1}\right)^{a_k}\right).
$$
By induction, the degree of any non-degenerate $d$-dimensional subvariety in $\bfP\left(1^{r}, \left(\frac{m_2}{m_1}\right)^{a_2}, \dots,\left(\frac{m_k}{m_1}\right)^{a_k}\right)$ is at least 
$$
\frac{m_1^{d-1}}{m_k^{d-1}}\left[r-1+ \frac{m_1a_1}{m_2} + \cdots + \frac{m_1(a_k+1-d)}{m_k}\right].
$$
On the other hand, by assumption, the degree of the image $\deg(\psi(X)) = m_1^d\deg(X)$ is less than 
\begin{align} \label{small_degree}
 \frac{m_1^{d-1}}{m_k^{d-1}}\left((a_0 -1)m_1+ a_1 + \frac{m_1a_2}{m_2} + \cdots + \frac{m_1(a_k+1-d)}{m_k} \right).
\end{align}
In particular, $\psi(X)$ must be degenerate, i.e., lies on a weighted hyperplane $V(f)$. By definition, the map $\psi^{\star}$ on coordinate rings leaves the variables of weight at least $2$  untouched. In particular, if the hyperplane $V(f)$ containing $\psi(X)$ had any coordinates with weight at least $2$, the $\psi^{\star}(f)$ would still have those coordinates, i.e., $X$ would lie on a weighted hyperplane, a contradiction. Thus, we may assume that $f$ is a linear combination of weight $1$ coordinates. In particular, $\psi(X)$ lies in the weighted projective subspace $\bfP\left(1^{\binom{a_0-1 + m_1}{m_1}+a_1-1}, \left(\frac{m_2}{m_1}\right)^{a_2}, \dots,\left(\frac{m_k}{m_1}\right)^{a_k}\right)$. We can keep repeating this argument until we obtain that $\psi(X)$ lies inside a linear subspace of the form $\bfP\left(1^{(a_0-1)m_1+a_1}, \left(\frac{m_2}{m_1}\right)^{a_2}, \dots,\left(\frac{m_k}{m_1}\right)^{a_k}\right)$. The pullback of the equations defining this linear space is a codimension $(a_0-1){m_1}$ subspace of $k[x_0,\dots,x_{a_0-1}]_{m_1}$. By \Cref{lemma_reducible}, this subspace must contain a union of hyperplanes, i.e., $X$ is degenerate, which is a contradiction. Thus, the degree of $X$ is given by \Cref{degree_bound_divisible}.

Now let $i$ be the smallest index such that $d> \sum_{j=i+1}^k a_j$ and assume that $i < k$. Choose a general form $f\in S(\bfw)$ of degree $m_k$. Then $X'=X \cap V(f)$  is a $(d-1)$-dimensional scheme in a smaller weighted projective space $\bfP(\bfw') \simeq \bfP(1^{a_0},m_1^{a_1},\dots,m_k^{a_k-1})$.
Consider the Veronese embedding $\nu$ of $\bfP(\bfw')$  given by $\mcO(m_k)$. Since we chose $f$ to have degree $m_k$, the image of $X'$ is $\nu(X)\cap H$ where $H$ is the hyperplane whose pullback is $V(f)$. Now 
\[
\deg(X') = \frac{1}{m_k^{d-1}}\deg(\nu(X')) = \frac{1}{m_k^{d-1}}\deg(\nu(X)\cap H) =  \frac{1}{m_k^{d-1}}\deg(\nu(X)) =  m_k\deg(X).
\]
We proceed inductively until we end up with a curve of minimal degree in $\bfP(1^{a_0},m_1^{a_1},\dots,m_{i-1}^{a_{i-1}},m_i^{d- \sum_{j=i+1}^k a_j})$. The degree of this was computed above, completing the proof. \qedhere
\end{proof}
 
\begin{definition} A subvariety $X \subseteq \mathbf{P}(\bfw)$ that attains the bound \Cref{degree_bound_divisible} or \Cref{degree_bound_divisible_2}, depending on its dimension, is called a \textbf{variety of minimal degree}.
\end{definition}

\begin{cor} \label[cor]{minimal_rational} Let $X$ be a non-degenerate  subvariety of minimal degree in a divisible weighted projective space. If $\dim(X) \leq a_k$, then $X$ is rational. Furthermore, if $\dim(X)=1$, then $X$ is also smooth.
\end{cor}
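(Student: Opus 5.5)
We propose to use the Veronese embedding $\nu$ given by $\mcO(m_k)$ (\Cref{lcm_very_ample}) and reduce to the classical theory of varieties of minimal degree in $\bfP^N$. Let $X$ be non-degenerate of minimal degree, of dimension $d\le a_k$, so the relevant bound is \eqref{degree_bound_divisible}. We argue by induction on $k$, following the structure of the proof of \Cref{prop_degree_bound}.

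For the inductive step, apply the map $\psi$ of \Cref{m_1va}. It is a closed immersion, so $\psi(X)\cong X$, and $\deg\psi(X)=m_1^d\deg(X)$. The proof of \Cref{prop_degree_bound} shows two things. First, $\psi(X)$ lies in a weighted linear subspace $\bfP(1^{r},(m_2/m_1)^{a_2},\dots)$ with $r\ge (a_0-1)m_1+a_1+1$ inside which it is non-degenerate. Otherwise one can cut down to $r=(a_0-1)m_1+a_1$ coordinates of weight one, and \Cref{lemma_reducible} then makes $X$ degenerate. Second, the degree of $\psi(X)$ is at most the inductive bound for that space, and that bound is monotone in $r$.

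Comparing $m_1^d$ times \eqref{degree_bound_divisible} with the inductive bound for $\bfP(1^{r},(m_2/m_1)^{a_2},\dots,(m_k/m_1)^{a_k})$ forces equality. So $\psi(X)$ is again of minimal degree, now in a divisible space with $k-1$ distinct weights greater than $1$. Its largest weight still has multiplicity $a_k\ge d$ (for $k\ge2$), so the hypothesis $d\le a_k$ is preserved. Rationality of $X$ then follows from rationality of $\psi(X)$.

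In the base case $k=1$ the target is $\bfP^{r-1}$. There $\psi(X)$ is a non-degenerate variety of minimal degree in the classical sense. By del Pezzo--Bertini it is a quadric hypersurface, a rational normal scroll, or a cone over the Veronese surface. The last two are rational. A quadric is rational if it is irreducible, which holds because $X$ is integral.

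This is where I expect the main obstacle. One must check that the codimension and degree bookkeeping match the classical bound $\deg=1+\codim$, including when $d=a_k$, where the term $(1-d)/m_k$ has to line up exactly. If the degree bound only forced equality in the inductive inequality and not non-degeneracy in the smaller space, I would instead slice as in the second half of \Cref{prop_degree_bound}, using general degree-$m_k$ forms to reduce to curves. A curve of minimal degree maps under $\nu$ to a curve whose classical degree is small, and then one would bound the arithmetic genus.

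For the smoothness claim with $\dim X=1$, the base case gives a curve in $\bfP^{r-1}$ of degree $1+\codim$. Such a curve is a rational normal curve, or a conic when the codimension is one, so it is smooth. Since $\psi$ is a closed immersion, $X\cong\psi(X)$ is smooth as well. The induction transfers smoothness at each step in the same way.
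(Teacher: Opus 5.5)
Your proposal is correct and follows essentially the same route as the paper. Both argue by induction on $k$ through the closed immersion $\psi$ of \Cref{m_1va}. Both use the argument of \Cref{prop_degree_bound} together with the degree comparison, via monotonicity in $r$, to show that $\psi(X)$ is non-degenerate of minimal degree in $\bfP(1^{(a_0-1)m_1+a_1+1},(m_2/m_1)^{a_2},\dots,(m_k/m_1)^{a_k})$. Both finish with the classical classification, and your base case $k=1$ is just the paper's $k=0$ case applied to $\psi(X)$.

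The bookkeeping you flagged does work out. For $k=1$ one gets $\deg\psi(X)=(a_0-1)m_1+a_1+1-d=1+\codim$ in $\bfP^{(a_0-1)m_1+a_1}$, so the slicing fallback is unnecessary.
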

\begin{proof} Once again, we proceed by induction on $k$, the number of distinct weights greater than $1$. The base case of $k=0$ being classical  \cite{Eisenbud_Harris_minimal}. Following the notation and proof of \Cref{prop_degree_bound}, we see that $\psi(X)$ lies inside 
$$\bfP\left(1^{(a_0-1)m_1+a_1+1}, \left(\frac{m_2}{m_1}\right)^{a_2}, \dots,\left(\frac{m_k}{m_1}\right)^{a_k}\right),$$ 
and no smaller weighted linear subspace, i.e., it is non-degenerate in this weighted projective space. Since the degree is of $\psi(X)$ is given by \Cref{small_degree}, it is a variety of minimal degree. By induction, $\psi(X)$ is rational. Since $\psi$ is a closed immersion, $X$ is isomorphic to $\psi(X)$, which implies that $X$ is rational. The other statement also follows from induction, since a curve of minimal degree in $\bfP^n$ is smooth.
\end{proof}
 
The first substantive divergence from the classical case is that weighted varieties of minimal degree need not be determinantal; in fact they need not even be arithmetically Cohen--Macaulay.

\begin{example} \label[example]{three_curves_1}
Consider the following three maps $\bfP^1\to\bfP(1,1,2,2)$:
\begin{align*}
\phi_0\colon [s:t]\mapsto [s^2 : t^2 : s^3t : st^3 ]\\
\phi_1 \colon [s:t]\mapsto [s^2 : st : st^3 : t^4]\\
\phi_2 \colon [s:t]\mapsto [s^3t : st^3 : s^8 : t^8]
\end{align*}
with images $C_0, C_1,$ and $C_2$ respectively. 
The curves $C_1$ and $C_2$ are arithmetically Cohen--Macaulay and are cut out by the $2\times 2$ minors of the following two matrices, respectively
\[
\begin{pmatrix}
x_1 & x_2^2 & y_1\\
x_2 & y_1 & y_2
\end{pmatrix}
\quad\text{and}\quad
\begin{pmatrix}
y_1 & x_1& x_2^2 \\
 x_1^2& x_2 & y_2
\end{pmatrix}.
\]

Note that $C_1$ and $C_2$ cannot be obtained from each other via automorphisms of $\bfP(1,1,2,2)$. Indeed, we can consider how the curves meet the singular locus of $\bfP(1,1,2,2)$, which is $V(x_1,x_2)$. Note that $C_1$ intersects the singular locus in only one point $[0:0:0:1]$ (with multiplicity 2), while $C_2$ intersects the singular locus in two distinct points, $[0:0:0:1]$ and $[0:0:1:0]$.

On the other hand, $C_0$ is minimally generated by four binomials
$
x_2y_1-x_1y_2, x_1x_2^3-y_2^2, x_1^2x_2^2-y_1y_2$, and $x_1^3x_2-y_1^2$.
 In particular, it is not determinantal. To see that $C_0$ is not arithmetically Cohen–Macaulay, note that, for instance, using \cite{M2}, the minimal resolution of $S/I(C_0)$ is
 $$
 0 \xrightarrow{} S^1 
 \xrightarrow{\begin{pmatrix} y_2 \\ -y_1 \\ -x_2 \\ x_1 \end{pmatrix}} S^4 
 \xrightarrow {
 \begin{pmatrix} -y_1 & -y_2 & -x_1^2 & -x_1x_2^2 \\ 
 -x_2 & 0 & -y_2 &  0 \\
 x_1 & -x_2 & y_1 & -y_2 \\
 0 & x_1 & 0 & y_1 \\
 \end{pmatrix}} S^4 
 \xrightarrow{\small {\begin{pmatrix} x_2y_1-x_1y_2 \\ x_1^3x_2-y_1^2 \\ x_1^2x_2^2-y_1y_2\\ x_1x_2^3-y_1^2 \end{pmatrix}}^{T}} S^1 \xrightarrow{} S/I(C_0) \xrightarrow{} 0.
 $$
\end{example}

This example shows that classifying varieties of minimal degree, even at the level of curves, is more complicated than in the classical case, where all such curves are equivalent up to automorphisms of $\mathbf{P}^n$. In fact, a dimension count shows that there are often infinitely many automorphism classes of minimal degree curves in divisible weighted projective spaces.

\begin{example} \label[example]{dimension_count} By \Cref{prop_degree_bound}, the minimal degree of any non-degenerate curve in $\bfP(1,1,2^k)$ is $\frac{k+2}{2}$. In particular, any curve $C \subseteq \bfP(1,1,2^k)$ of minimal degree is the preimage of a rational normal curve of degree $k+2$ under the $2$-uple embedding $\psi:\bfP(1,1,2^k) \to \bfP^{k+2}$. 

Let $x_0,x_1,\dots,x_{k+2}$ denote the coordinates on $\bfP^{k+2}$.  The space of rational normal curves in $\bfP^{k+2}$ is a projective space of dimension $(k+3)^2 - 4$. For a rational normal curve to lie inside $\text{im}(\psi)$, it needs to satisfy $x_0x_2 - x_1^2 = 0$. By considering a general rational normal curve as a map $\phi: \bfP^1 \to \bfP^{k+2}$, the equation $x_0x_2-x_1^2$ turns into a degree $2(k+2)$ equation in $s,t$, the coordinates on $\bfP^1$. For the curve to lie inside $\text{im}(\psi)$, the coefficients of the equation have to be zero. Since there are $2k+5$ coefficients, the dimension of the space of rational normal curves in $\bfP(1,1,2^k)$ is at least $(k+3)^2 - 4 - (2k+5) = k^2+4k$.

On the other hand, the automorphism group of $\bfP(1,1,2^k)$ is 
$(\text{PGL}_2 \times \text{GL}_k)\rtimes (\text{Sym}^2(\kk^2))^{\oplus k}$. Explicitly, if the coordinates on $\bfP(1,1,2^k)$ are $x_1,x_2,y_1,\dots,y_k$, then the action is given by 
$$(x_1,x_2) \mapsto (Ax_1,Ax_2), \quad (y_1,\dots,y_k) \mapsto (By_1 + q_1(x_1,x_2),\dots,By_k + q_k(x_1,x_2))$$ 
with $A \in \text{GL}_2$, $B \in \text{GL}_k$ and $q_i \in \kk[x_1,x_2]_2$. The dimension of this group is $k^2+3k+3$. It follows that for $k>3$, the minimal degree curves in $\bfP(1,1,2^k)$ are not finite up to projective automorphisms.
\end{example}

\section{Weighted $1$-generic matrices and determinantal scrolls}

Having determined lower bounds for the degrees of non-degenerate weighted projective varieties, our next task is to explore the varieties that achieve this minimal degree. As we have just noted, the classification of all such varieties appears more complicated than in standard projective space. Rather than pursue a full classification, we construct and study a family of candidates analogous to rational normal scrolls. 

\subsection{pseudo 1-generic matrices}

\begin{definition}\label{def_gen_entries}
Let $M$ be a matrix with entries in $S(\bfw)$. A \textbf{generalized row} of a matrix $M$ with entries in $S$ is a nontrivial $\kk$-linear combination of the rows of $M$, and a \textbf{generalized entry} is a nontrivial $\kk$-linear combination of the entries of a generalized row. We say that the matrix $M$ is \textbf{pseudo 1-generic} if the entries of any generalized row are a regular sequence in $S(\bfw)_{(x_0,\ldots, x_n)}$. 
\end{definition}

\begin{remark} \label{change_coordinates} Note that if $M$ is pseudo 1-generic, then multiplication by an invertible matrix $P$ (that preserves homogeneity of the entries) preserves pseudo 1-genericity. Indeed, the entries of a generalized row are regular if and only if the Koszul complex on those entries is exact.  The Koszul complex on the corresponding generalized row of $MP$ is just the original complex multiplied by $P$ and is thus still exact.
\end{remark}

The ideal of maximal minors of $M$ is homogeneous if and only if $\deg(M_{i,j}) - \deg(M_{i',j})$ is constant in $j$. After reordering the rows and columns, we may assume that the degrees of the entries of $M$ are nondecreasing along both rows and columns. We can describe the degrees of the entries of $M$ entirely via the degrees of the entries in the first row and the first column. 
If the degrees in the first row are $(a_1, \ldots, a_q)$ and the degrees in the first column are $(a_1, a_1 + b_1, a_1+b_2, \ldots, a_1+b_{p-1})$, then we define the \textbf{profile} of $M$ to be $(a_1, \ldots, a_q; b_1, \ldots, b_{p-1})$. If all of the $b_i$ are zero, then we omit them and just write $(a_1, \ldots, a_q)$.

\begin{definition}\label{def_1generic}
Let $M$ be a pseudo 1-generic matrix over $S(\bfw)$. If $M$ has profile $(a_1, \ldots, a_q)$ which can be obtained by deleting entries from $(w_0, \ldots, w_n)$, then we say $M$ is \textbf{1-generic}.
\end{definition}

In the standard graded case, pseudo 1-generic is equivalent to 1-generic as in \cite[Section 6B]{eisenbud:syzygies}.

\begin{remark} \label{chop_first}
If there are $a_0$ many weights of the lowest degree $w_0$, then any $1$-generic matrix can have at most $a_0-1$ columns of weight $w_0$.
\end{remark}

\begin{prop} \label[prop]{thm_CM}
The vanishing locus of the maximal minors of a $p\times q$ pseudo 1-generic matrix over $S(\bfw)$ is an arithmetically Cohen--Macaulay scheme of codimension $q - p + 1$.
\end{prop}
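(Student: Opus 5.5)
The plan is to reduce to the Eagon--Northcott theorem. The Eagon--Northcott complex of $M$ resolves $S(\bfw)/I_p(M)$ exactly when $I_p(M)$ has the generic (maximal possible) codimension $q-p+1$, and in that case $S(\bfw)/I_p(M)$ is Cohen--Macaulay because $S(\bfw)$ is. Since $I_p(M)$ is homogeneous and both depth and codimension of graded modules can be tested after localizing at the irrelevant ideal $\mathfrak m=(x_0,\dots,x_n)$, it suffices to show that $\operatorname{ht} I_p(M)S(\bfw)_{\mathfrak m}\ge q-p+1$. The inequality $\le$ is the Eagon--Northcott height bound. Once we know the codimension is $q-p+1$, the local ring is Cohen--Macaulay, so $S(\bfw)/I_p(M)$ is Cohen--Macaulay. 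Hence $V(I_p(M))$ is arithmetically Cohen--Macaulay of codimension $q-p+1$ in $\bfP(\bfw)$, because codimension in $\bfA(\bfw)$ and in $\bfP(\bfw)$ agree for homogeneous ideals.

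For the height bound I will follow the classical argument for $1$-generic matrices in \cite[Section 6B]{eisenbud:syzygies}, proceeding by induction on $p$. When $p=1$, $I_1(M)$ is generated by the entries of the single row. By pseudo $1$-genericity these form a regular sequence of length $q$ in $S(\bfw)_{\mathfrak m}$, so the height is $q$. For $p>1$, let $P$ be a minimal prime of $I_p(M)$ in the local ring and suppose its height is less than $q-p+1$. Working over $\kappa(P)$, the image of $M$ has rank $<p$, so some nontrivial $\kappa(P)$-linear combination of the rows vanishes modulo $P$.

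The main obstacle is that this combination has coefficients in $\kappa(P)$, not in $\kk$, so pseudo $1$-genericity cannot be applied to it directly. I would handle this with the standard dimension count on the incidence variety. Consider
\[
Z=\{([\lambda],x)\;:\;\lambda M(x)=0\}\subseteq \bfP^{p-1}\times \bfA(\bfw).
\]
For each fixed $\lambda\in\kk^p\setminus 0$, the generalized row $\lambda M$ has entries forming a regular sequence in $S(\bfw)_{\mathfrak m}$. The fiber over $[\lambda]$ is cut out by these $q$ equations, and since these are homogeneous, regularity at $\mathfrak m$ gives that the fiber has codimension exactly $q$ near the origin, in fact everywhere by homogeneity. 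It follows that $\dim Z\le (p-1)+(n+1-q)$. The image of $Z$ in $\bfA(\bfw)$ is exactly $V(I_p(M))$, so $\dim V(I_p(M))\le n+1-(q-p+1)$, which is the required height bound. Note that homogeneity of the row combination requires the entries in a column to have matching degrees, which the profile convention guarantees once rows are shifted appropriately. If the rows have different degree shifts $b_i$, I will use the weighted $\mathbf{G}_m$-action and note that the equations $\lambda M=0$ remain quasi-homogeneous only for $\lambda$ supported on rows of equal shift. The cleaner option is to apply the fiber-dimension count directly to the affine incidence variety without requiring homogeneity, using that regular sequences in the local ring give codimension $q$ for the fiber germ at $0$, and upper semicontinuity together with the conical structure to spread this out. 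This is the step I expect to need the most care.

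Finally, with codimension $q-p+1$ in hand, I will invoke the Eagon--Northcott theorem: the Eagon--Northcott complex is a graded free resolution of $S(\bfw)/I_p(M)$ of length $q-p+1$. By Auslander--Buchsbaum, $S(\bfw)/I_p(M)$ then has depth $n+1-(q-p+1)=\dim$ and is therefore Cohen--Macaulay. The grading only affects the twists in the complex and does not change exactness.
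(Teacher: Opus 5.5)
Your proposal is correct and matches the paper's argument: it reduces to the Eagon--Northcott criterion and bounds $\dim V(I_p(M))$ using the incidence variety over $\bfP^{p-1}$, whose fibers are cut out by the regular sequences coming from generalized rows. The opening induction on $p$ and the $\kappa(P)$ discussion are not needed once you use this count. Your extra remark about non-homogeneous generalized rows fills in a point the paper's proof passes over silently: regularity in $S(\bfw)_{\mathfrak m}$ then only controls the fiber germ at the origin, so one needs semicontinuity of fiber dimension together with the conical structure of $V(I_p(M))$.
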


\begin{proof}
Let  $M$ be a $p \times q$ pseudo 1-generic matrix over $S(\bfw)$, let $X = V(I_p(M)) \subset  \bfP(\bfw)$ be the defining scheme and $C(X) \subseteq \bfA(\bfw)$ its affine cone. 
Since $\codim(I_p(M)) \le q-p+1$, it suffices to show that $\dim (X) \le n-(q-p+1)$. 
Indeed, in this case, the Eagon-Northcott complex on $M$ would be a minimal free resolution of $S(\bfw)/I_X$ \cite[Thm A2.60]{eisenbud:syzygies}. 
It follows that $S(\bfw)/I_X$ has projective dimension $q-p+1$ and thus is Cohen--Macaulay.

We may assume that $X$ is not a cone, since adjoining variables preserves codimension and Cohen--Macaulayness. 
In particular, the rank of $M$ never drops to zero.  
A point $x \in \bfA(\bfw)$  lies in $X$ if and only if $\rank(M(x)) < p$, which is equivalent to some generalized row of $M$ vanishing at $x$. 
Note that the generalized rows are parametrized by $ \bfP^{p-1}$. 
Define the incidence variety
$$
\widetilde{X} = \left\{ (y,x) \in  \bfP^{p-1} \times  \bfA(w_0,\dots,w_n) : R_{y}(x)=0 \right\},
$$
where $R_{y}$ denotes the generalized row of $M$ corresponding to the parameter $y \in \bfP^{p-1}$. 
Since $M$ is pseudo $1$-generic, for any $y \in  \bfP^{p-1}$, the fiber $\widetilde{X}_y$ is defined by $q$ equations forming a regular sequence, hence
$\dim (\widetilde{X}_y) = n+1-q$.
It follows that $\dim (\widetilde{X}) = (n+1-q)+(p-1) = n+1-(q-p+1)$. 
Since the projection to $\bfA(w_0,\dots,w_n)$ maps $\widetilde{X}$ onto $C(X)$ we have $\dim(X) = \dim(C(X))-1 \leq \dim(\widetilde{X}) - 1 = n -(q-p+1)$, as required.
\end{proof}

\begin{remark} \label[remark]{XtildeCM} In fact, $\widetilde{X}$ in the proof of \Cref{thm_CM} is also arithmetically Cohen--Macaulay. The defining ideal of $\widetilde{X}$ is 
$$
\left(\sum_{i=1}^p y_i M_{i,1}, \sum_{i=1}^p y_i M_{1,2},\dots, \sum_{i=1}^p y_i M_{i,q} \right) \subseteq S(\bfw)[y_1,\dots,y_p].
$$
In particular, $\widetilde{X}$ is defined by $q$ equations. Since its dimension is also $n+1-(q-p+1) = (n+p)-q$, it follows that $\widetilde{X}$ is Cohen--Macaulay.
\end{remark}

In general, the maximal minors of a $1$-generic matrix might not define an irreducible or even a reduced variety. Moreover, this cannot always be detected by the profile, as the following examples show.

\begin{example} \label{profile_not_unique}
Let $S(\bfw) = \kk[x_1, x_2, x_3, y_1, y_2, y_3]$ with $\deg(x_i) = 1$ and $\deg(y_i) = 2$. Consider the matrices
\[ M = \begin{pmatrix} x_1 & x_2^2 & y_1 & y_2\\ x_2 & y_1 & x_3^2 & y_3\end{pmatrix} \quad \text{and} \quad
N = \begin{pmatrix} x_1 & x_2^2 & y_1 & y_2 \\ x_2 & x_3^2 & y_2 & y_3\end{pmatrix}.
\]
We will show that $M,N$ are both $1$-generic with the same profile $(1,2,2,2)$, but only $I_2(N)$ is prime.

To see that $M$ is pseudo 1-generic, take the ideal generated by a generalized row 
$$I:=\left(R_{[a:b]}\right) = (ax_1+bx_2,ax_2^2+by_1,ay_1+bx_3^2,ay_2+by_3).$$ 
Clearly, if $a=0$ or $b=0$, then $I $ is a complete intersection. 
If $b \ne 0$, then after a change of coordinates $I = (x_2,y_1,x_3^2,y_3)$ is a complete intersection. 
Thus $M$ is pseudo 1-generic and, since the profile of $M$ is $(1,2,2,2)$, it is also 1-generic. 
However, $I_2(M)$ is not prime since the factors of the minor $x_2^2x_3^2-y_1^2 = (x_2x_3-y_1)(x_2x_3+y_1)$ are not in $I_2(M)$.

Similarly, one can check that $N$ is also $1$-generic with profile $(1,2,2,2)$. 
Observe that $x_1$ is a non-zero divisor on $S(\bfw)/I_2(N)$; since $S(\bfw)/I_2(N)$ is Cohen--Macaulay, it suffices to check that the dimension drops by $1$. 
Thus, to show that $I_2(N)$ is a prime ideal it suffices to show that the localization $I_2(N)S_{x_1}$ is prime.  
As in \Cref{prime_is_generic}, the localized ideal is $I_2(N)S_{x_1} = (x_3^2 - \frac{x_2^3}{x_1}, y_2 - \frac{y_1x_2}{x_1}, y_3 - \frac{y_2x_2}{x_1})$ which is evidently a prime ideal.
\end{example}

Conversely, prime determinantal ideals need not be defined by pseudo 1-generic matrices.  
\begin{example}
Consider $\kk[x_1,x_2,y_1,\dots,y_7]$ with $\deg(x_i)=1$ and $\deg(y_i) = 2$ and the $3\times 3$ matrix
$$
M = 
\begin{pmatrix}
x_1^2 & x_1x_2 & y_1 \\
y_2 & y_3 & y_4 \\
y_5 & y_6 & y_7
\end{pmatrix}.
$$
It is easy to see that $\det(M)$ is irreducible. Thus, $I_3(M)$ is prime and Cohen--Macaulay, while the first row does not form a regular sequence.
\end{example}

However, if we restrict to matrices with two rows this phenomenon cannot occur. 

\begin{prop} \label[prop]{prime_is_generic} Let $M$ be a $2 \times n$ matrix such that $I_2(M) \subseteq S(\bfw)$ is homogeneous with expected codimension $n-1$. If $I_2(M)$ is also prime, then $M$ must be pseudo 1-generic.
\end{prop}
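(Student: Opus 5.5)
We argue by contradiction. Suppose $I_2(M)$ is prime of codimension $n-1$ but $M$ is not pseudo 1-generic. Then some generalized row fails to be a regular sequence in the local ring $S(\bfw)_{(x_0,\ldots,x_n)}$. We first normalize this row. By \Cref{change_coordinates}, left-multiplying $M$ by an invertible $2\times 2$ matrix changes neither the ideal $I_2(M)$ nor pseudo 1-genericity of the remaining rows. Homogeneity of the generalized row forces the two row-degrees to agree when the row mixes both rows; otherwise the row is one of the original rows. So we may assume the first row $(f_1,\dots,f_n)$ is not a regular sequence. Since the ring is local and the $f_i$ are homogeneous, this means $\codim(f_1,\dots,f_n) < n$. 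Hence the height of $J=(f_1,\dots,f_n)$ is at most $n-1$.

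Next we relate $J$ to $I_2(M)$. Each minor $f_ig_j - f_jg_i$ lies in $J$, so $I_2(M) \subseteq J$. As $I_2(M)$ is prime of height $n-1$, some minimal prime $P$ of $J$ satisfies $I_2(M)\subseteq P$ and $\operatorname{ht} P \le n-1$. Therefore $P = I_2(M)$, and so $J \subseteq I_2(M)$. Consequently $J = I_2(M)$ up to radical. In fact $J\subseteq I_2(M)\subseteq J$, so $J = I_2(M)$ on the nose.

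We now derive a contradiction from $J = I_2(M)$. Any minor $f_ig_j-f_jg_i$ has degree $\deg f_i + \deg g_j$, which strictly exceeds the degree of some $f$'s once the row-degree shift is positive. In the degree-$0$ shift case it equals $\deg f_i+\deg f_j$ after normalizing. By degree considerations the $f_i$ of smallest degree $\delta$ cannot lie in $I_2(M)$, since $I_2(M)$ has no nonzero elements of degree below $\min(\deg f_i+\deg g_j)$ and $\deg g_j \ge 1$. Here we must check that $g_j$ cannot be a nonzero constant. If some entry were a unit, $I_2(M)$ would be generated by $n-1$ elements eliminating variables. The primality and codimension then still hold, so this case needs separate handling: a unit entry lets us column- and row-reduce $M$ to a block form in which the first row is $(1,0,\dots,0)$, and then $I_2(M)$ is generated by the other entries of the second row. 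We will treat this degenerate case directly, or exclude it since unit entries make the profile contain $0$. Once all entries have positive degree, $f_1\in J\setminus I_2(M)$ unless $f_1=0$. If $f_1=0$, the zero entry forces $I_2(M)\subseteq (f_2,\dots,f_n)\cdot(g_1)+\cdots$. More cleanly, if some $f_i=0$ then $I_2(M)\subseteq (f_1,\dots,\widehat{f_i},\dots,f_n)+(g_i)$. Primality then gives either $g_i\in I_2(M)$, impossible by degree, or $I_2(M)\subseteq (f_j:j\ne i)$, which is a contradiction by the same height argument since that ideal has height $\le n-1$ and must equal $I_2(M)$ while containing low-degree elements.

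The main obstacle is making the degree argument airtight when the row degrees differ and $M$ may contain zero or constant entries. For this we would use that all nonzero minors have degree at least $\deg f_i+\deg g_j > \deg f_i$ whenever $g_j$ is nonconstant. Zero entries are handled by the reduction above, and constant entries by row/column reduction, which drops $n$ and lets us argue by induction on $n$. The remaining possibility, that every $f_i$ vanishes, would make $I_2(M)=0$, contradicting the codimension hypothesis.
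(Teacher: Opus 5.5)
Your central mechanism is correct and differs from the paper's, but the normalization in your first paragraph has a real gap.

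\textbf{The gap: inhomogeneous generalized rows.} The definition of pseudo 1-generic quantifies over \emph{all} nontrivial combinations $\lambda R_1+\mu R_2$ of the rows $R_1=(f_i)$ and $R_2=(g_i)$. Regularity is asked for in the local ring $S(\bfw)_\mfm$ precisely so that inhomogeneous combinations are allowed. The statement only assumes $I_2(M)$ is homogeneous, so the two rows may have different degrees (row shift $b\neq 0$, as for the profiles $(a_1,a_2,a_3;b)$ the paper uses for curves in $\bfP(1,3,4,7)$). In that case, for $\lambda\mu\neq 0$, the entries $h_i=\lambda f_i+\mu g_i$ are inhomogeneous. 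Your sentence ``otherwise the row is one of the original rows'' discards these rows. Your later steps (passing from local to global height, and the degree count) also use homogeneity of the bad row. So as written you only prove that the homogeneous generalized rows are regular.

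\textbf{The repair uses your own idea.} Work in $R=S(\bfw)_\mfm$.
\begin{enumerate}
\item You still have $I_2(M)R\subseteq (h_1,\dots,h_n)R$.
\item If the $h_i$ are not a regular sequence, then $\operatorname{ht}(h_1,\dots,h_n)R\le n-1$, since $R$ is Cohen--Macaulay.
\item Hence the height-$(n-1)$ prime $I_2(M)R$ is a minimal prime of $(h_1,\dots,h_n)R$, so $h_i\in I_2(M)R\cap S(\bfw)=I_2(M)$.
\item Since $I_2(M)$ is homogeneous, taking homogeneous components puts every $f_i$ (and $g_i$) in $I_2(M)$, and your degree argument finishes.
\end{enumerate}

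\textbf{Two smaller points.} First, zero entries need no special treatment. A nonzero minor $f_ig_j-f_jg_i$ has a nonzero product term, so it has degree at least $\delta+1$, where $\delta$ is the least degree of a nonzero $f_l$. Your detour in the last paragraph is therefore unnecessary. Its inference is also not valid: primality of $I_2(M)$ does not let you pick a summand of $(f_j : j\neq i)+(g_i)$ that contains it. Second, constant entries must be excluded by the standing convention that entries are forms of positive degree; induction on $n$ will not work, because the statement is false for them. For $M=\begin{pmatrix}1&0&0\\0&x_1&x_2\end{pmatrix}$, the ideal $I_2(M)=(x_1,x_2)$ is a homogeneous prime of codimension $2$, yet the first row generates the unit ideal.

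\textbf{Comparison with the paper.} With these repairs your proof is complete and more economical than the paper's. The paper localizes at $Q=I_2(M)$ and inverts an entry of the other row to column-reduce, so that $QS_Q$ is generated by $n-1$ modified entries of the bad row. It then transfers regularity back to $S(\bfw)_\mfm$ and shows the remaining entry is a nonzerodivisor, with a separate common-factor argument for $n=2$. Your height comparison is uniform in $n$ and never passes between $S_Q$ and $S(\bfw)_\mfm$: the ideal of any row contains $I_2(M)$, so a non-regular row would have $I_2(M)$ as a minimal prime and hence lie inside it. In exchange, the paper's local computation shows $I_2(M)$ is generated at its generic point by $n-1$ entries of a row. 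That is the same mechanism it reuses to prove primality in \Cref{thm_minDegStructure}.
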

\begin{proof} Assume $Q := I_2(M)$ is a prime ideal. Since we can always perform row operations, it suffices to show that the last row is a regular sequence in $S(\bfw)_{\mfm}$. Let $M = (f_{i,j})_{i,j}$ be a non-zero matrix with $f_{i,j}$ homogeneous. If  $f_{2,1} = 0$, then $Q$ will not be prime.

If $n = 2$, then the bottom row fails to be a regular sequence exactly when $f_{2,2}$ is a zero divisor in $S(\bfw)_{\mfm}/(f_{2,1})$. This happens if $f_{2,1}$ and $f_{2,2}$ share a common factor; in particular, $I_2(M) = (\det(M))$ will not be a prime ideal, which is a contradiction.
So assume $n \geq 3$. Since $Q$ is graded, $f_{1,1}$ is not in $Q$ and, since it becomes a unit in $S_Q$, we can apply $S_Q$-invertible column operations to put $M$ into the form
$$
\begin{pmatrix}
1 & 0 & \cdots & 0\\
f_{2,1} & f'_{2,2} & \cdots & f'_{2,n} \\
\end{pmatrix}.
$$
In particular, $QS_Q = (f'_{2,2},\dots,f'_{2,n})$. Since $\text{ht}(Q) = n-1$, the elements $f'_{2,2},\dots,f'_{2,n}$ must form a regular sequence. This in turn implies that $f_{2,2},\dots,f_{2,n}$ form a regular sequence in $S(\bfw)_{\mfm}$ (at the level of Koszul complexes, we are just localizing and multiplying by an invertible matrix and both of these are faithfully flat operations). Since $f_{2,2},\dots,f_{2,n}$ are linearly independent modulo $Q^2S_Q$, so are $f'_{2,2},\dots,f'_{2,n}$.
In particular, $S_Q/QS_Q$ is a regular local ring and thus is a domain. Since $f_{2,1} \ne 0$, this implies that $f_{2,1}$ is a non-zero divisor in $S_Q/QS_Q= S_Q/(f_{2,2},\dots,f_{2,n})$. This implies that $f_{2,1}$ is a non-zero divisor modulo $(f_{2,2},\dots,f_{2,n})$, i.e., the bottom row is a regular sequence.
\end{proof}

The upshot of \cref{prime_is_generic} is that for $2\times n$ matrices, pseudo 1-generic is the right condition to characterize determinantal (irreducible) varieties. As mentioned earlier, increasing an entry of the profile of a matrix will increase the degree of the resulting determinantal variety. The ``minimal" profiles are those that belong to 1-generic matrices, so it is to these that we further restrict to define a weighted analogue of rational normal scrolls.

\begin{definition}\label[definition]{def_weightedScroll} 
Let $M$ be a 1-generic matrix in $S(\mathbf{w})$ with two rows. 
If $X = V(I_2(M)) \subseteq \mathbf{P}(\mathbf{w})$ is integral, we say that $X$  is a \textbf{weighted determinantal scroll}.  
\end{definition}

We build integrality into our definition because, in general,  the exact condition for integrality and smoothness depends on the arithmetic of polynomials that appear in the matrix (see \Cref{profile_not_unique} and \Cref{surface_possibilities}). Nonetheless, we show in \Cref{thm_minDegStructure} that the varieties of minimal degree defined by $1$-generic matrices are integral (and smooth), including the case of curves. 
We also include the qualifier ``determinantal" to account for the possibility that, from a more geometric standpoint, there are varieties that should be considered ``scrolls" that cannot be defined determinantally. Classically, a rational normal scroll is a family of linear spaces parameterized by $\bfP^1$, where the linear space corresponding to $[s:t]$ is the projective span of points corresponding to $[s:t]$ on a collection of rational curves. We have seen that not all minimal degree curves, though rational, are determinantal. Thus any geometric definition of a scroll akin to the classical case would be forced to include examples that are similarly not determinantal. 
Since we deal exclusively with determinantal scrolls, we may omit the qualifier for concision.

\begin{example} \label[example]{surface_possibilities}For simplicity, assume $\text{char}(\kk) = 0$. Let $X \subseteq \bfP(1^4,m^3)$ be the subscheme defined by maximal minors of
$$
M = 
\begin{pmatrix} 
x_1 & y_1 &  p_0 & y_3\\
x_2 & y_2 &y_3 + p_1 & p_2
\end{pmatrix}
$$
where $p_i \in \kk[x_1,\dots,x_4]$ are polynomials of degree $m$. Note that $M$ is $1$-generic iff $s^2p_0+stp_1-t^2p_2$ is not the zero polynomial for all $[s:t] \in \bfP^1$. To ensure it is integral we need to enforce the following conditions:
\begin{itemize}
\item $p_0,p_1,p_2$ do not share a common factor, and
\item $p_1^2 + 4p_0p_2$ is not a square.
\end{itemize}
Consider the following matrices when $m=2$:
$$
M = 
\begin{pmatrix} 
x_1 & y_1 & x_3^2 + x_2^2 & y_3\\
x_2 & y_2 & y_3 & x_4^2
\end{pmatrix}, \quad
M' = 
\begin{pmatrix} 
x_1 & y_1 & x_3^2 & y_3\\
x_2 & y_2 & y_3  & x_4^2
\end{pmatrix}, \quad
M'' = 
\begin{pmatrix} 
x_1 & y_1&  -x_3^2 & y_3 \\
x_2 & y_2 & y_3 + 2x_3x_4  & x_4^2
\end{pmatrix}.
$$
Let $X$, $X'$ and $X''$ be schemes defined by the maximal minors of the above matrices, respectively. We have the following facts about these varieties.
\begin{enumerate}
\item $X$ is integral with the general fiber of $X \to \bfP^1$ being isomorphic to $V(x_1^2+x_2^2+x_3^2) \subseteq \bfP(1^3,2)$.
\item  $X'$ is a reducible union. In particular, $I_2(M') $ is
$$
(y_3-x_3x_4,x_2x_3-x_1x_4,x_2y_1-x_1y_2,x_4y_1-x_3y_2) 
\cap (y_3+ x_3x_4,x_2x_3+x_1x_4,x_2y_1-x_1y_2,x_4y_1+x_3y_2).
$$
\item $X''$ is generically non-reduced. In particular, $\sqrt{I_2(M'')}$ is
$$ (y_3+x_3x_4,x_1x_4+x_2x_3,x_2y_1-x_1y_2,x_4y_1+x_3y_2).$$
\end{enumerate}
\end{example}

To describe the structure of $2\times n$ $1$-generic matrices, we appeal to the theory of $2\times n$ matrices of linear forms, specifically the Kronecker--Weierstrass characterization of matrix pencils. 

\begin{definition} \label[definition]{matrix_decomposition_notation} Let $\bfw = (1^{a_0}, m_1^{a_1},\dots,m_k^{a_k})$ and let $S(\bfw) = \kk[x_{i,j}]_{i,j}$ with $\deg(x_{i,j}) = m_i$ (see \Cref{setup}). Let $M$ be a $1$-generic $2 \times n$ matrix over $S(\bfw)$. 
Denote by $M_i$ the submatrix of $M$ corresponding to the columns of degree $m_i$.
If we consider the ring $S(m_i^{a_i}) := S(\bfw)/(\{x_{j,l}: j \ne i\})$,  then the restriction $\overline{M_i}$ to $S(m_i^{a_i})$ is a matrix whose entries are linear forms in the $x_{i,l}$ or zero. Now by the theory of Kronecker--Weierstrass forms (see \cite[Section 3]{RS24} and \cite{Chun90}), up to a linear change of coordinates in $S(m_i^{a_i})$, $\overline{M_i}$ is a concatenation of scroll blocks, Jordan blocks, nilpotent blocks and zero blocks. Lifting these back to $M_i$ and then applying a weighted change of coordinates to each degree $m_i$ variable just once, we see that $M_i$ is the concatenation of blocks, each of which has one of the following forms: 
\begin{enumerate}
\item a \textbf{weighted scroll block} 
$$
\begin{pmatrix} x_{i,j_1} & x_{i,j_2} & \cdots & x_{i, j_{l-2}} & x_{i,j_{l-1}} \\ 
x_{i,j_2} + p_2 & x_{i,j_3} + p_3 & \cdots & x_{i,j_{l-1}} + p_{l-1} & x_{i,j_{l}} 
 \end{pmatrix},
$$
\item a \textbf{weighted Jordan block}
$$\begin{pmatrix} x_{i,j_1} & x_{i,j_2} & \cdots  & x_{i,j_{l-1}} & x_{i,j_l} \\ 
x_{i,j_2} + \epsilon x_{i,j_1} + p_1  & x_{i,j_3} + \epsilon x_{i,j_2} + p_2  & \cdots  & x_{i,j_l} + \epsilon x_{i,j_{l-1}} + p_{l-1} & \epsilon x_{i,j_l} + p_l
 \end{pmatrix}
 $$
for some $\epsilon \in \kk$, 
\item a \textbf{weighted nilpotent block}
$$\begin{pmatrix} p_0 & x_{i,j_1} & x_{i,j_2} & \cdots & x_{i,j_{l-1}} & x_{i,j_l} \\ 
x_{i,j_1} +p_1& x_{i,j_2} +p_2 & x_{i,j_3} +p_3& \cdots & x_{i,j_l} + p_l & p_{l+1}
 \end{pmatrix}, 
 $$
\item or a \textbf{weighted zero block}
$$\begin{pmatrix} p_1 & p_2 & \cdots &  p_l \\ 
 p_{l+1}& p_{l+2} &  \cdots & p_{2l}
 \end{pmatrix}. 
$$
\end{enumerate}
In each of the above blocks,  $p_j \in S(1^{a_0},m_1^{a_1},\dots,m_{i-1}^{a_{i-1}})$ is a polynomial of degree $m_i$.
Moreover, the sets of variables of degree $m_i$ appearing in each block of $M_i$ are disjoint (though note the same lower-degree variable may appear in multiple blocks).
\end{definition}

\begin{remark} Two degenerate cases that appear are Jordan blocks of size one and nilpotent blocks of size one. These respectively look like 
$$
\begin{pmatrix} x_{i,j_1} \\ \epsilon x_{i,j_1} + p_1 \end{pmatrix},
\text{ and }
\begin{pmatrix} p_{1} \\ x_{i,j_1}  \end{pmatrix}.
$$
\end{remark}

Using this characterization, we now prove a structure theorem for $2\times n$ matrices with certain profiles. Note that we do not assume $S(\bfw)$ is divisible.

\begin{thm}\label[theorem]{thm_minDegStructure}
Let $M$ be a weighted $1$-generic  
matrix over $S(\bfw)$ with two rows having profile 
\[
(1^{a_0-1}, m_1^{a_1}, \ldots, m_{k-1}^{a_{k-1}}, m_{k}^{r_k}).
\]
Denote by $M_i$ the submatrix consisting of columns of degree $m_i$ so that the matrix $M$ can be written in block form as $M = (M_0 \mid M_1 \mid \cdots \mid M_k)$. Then, up to a change of coordinates,
\begin{itemize}
\item $M_0$ is a scroll block,
\item for each $1 \leq i \leq k-1$, $M_i$ is made up of Jordan blocks and at most one nilpotent block of size one,
\item $M_k$ is made up of Jordan blocks, scroll blocks and at most one nilpotent block of size one.
\end{itemize}
Furthermore, $I_2(M)$ is prime.
\end{thm}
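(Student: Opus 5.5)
The plan is to handle each degree piece $M_i$ separately. I will use Kronecker--Weierstrass (\Cref{matrix_decomposition_notation}) together with a counting argument controlled by pseudo 1-genericity, and then prove primality by localizing at a degree-one variable.

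\textbf{Step 1: $M_0$.} The block $M_0$ has $a_0-1$ columns of linear forms in $a_0$ variables. Since the entries of any generalized row form a regular sequence, no generalized entry of $M_0$ vanishes, so $M_0$ is classically 1-generic. A Jordan block fails this: the combination $\epsilon\cdot(\text{row }1)-(\text{row }2)$ has vanishing last entry. A nilpotent block or a zero block already has a vanishing entry, since $p_j=0$ in degree one. So $M_0$ consists only of scroll blocks. A scroll block with $l-1$ columns uses $l$ variables, the variables used by different blocks are disjoint, and there are at most $a_0$ of them. Hence $M_0$ is a single scroll block involving all degree-one variables.

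\textbf{Step 2: $M_i$ for $i\ge 1$, by counting.} Fix a generalized row $R$ and let $\overline{R}$ be its image modulo the variables of degree $<m_i$. Any entry of $R$ in $M_i$ that vanishes in $\overline{R}$ is a polynomial in the lower variables $S(1^{a_0},\dots,m_{i-1}^{a_{i-1}})$. That ring has dimension $a_0+\dots+a_{i-1}$, and it already contains the $a_0-1+a_1+\dots+a_{i-1}$ lower-degree entries of $R$. Since the whole set of entries is a regular sequence, the height count lets at most one further entry lie there.

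Now a Jordan block contributes such an entry at the row $[\epsilon:-1]$. A nilpotent block contributes entries at both $[1:0]$ and $[0:1]$, namely $p_0$ and $p_{l+1}$. A zero block contributes all of its entries at every row. It follows that there is no zero block of size $\ge 2$ and at most one nilpotent block. A zero block of size one would have to be the only vanishing entry at every row, which is impossible once any Jordan block is present.

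Then compare columns with variables. A scroll block has $\#\text{cols}=\#\text{vars}-1$, a Jordan block has them equal, and a nilpotent block has $\#\text{cols}=\#\text{vars}+1$. For $i\le k-1$ we have exactly $a_i$ columns and at most $a_i$ variables, so the number of scroll blocks is at most the number of nilpotent blocks, which is at most one.

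\textbf{The main obstacle} is to exclude the configuration (one scroll block $+$ one nilpotent block) in $M_i$ for $i<k$, and to show that the surviving nilpotent block has size one. My first attempt is to choose a generalized row cleverly, combining the row $[1:0]$ or $[0:1]$ of the nilpotent block with the free parameter of the scroll block, and show that one further entry then becomes a zero divisor modulo the lower-degree entries. If that fails, I would instead use the weighted change of coordinates to absorb the $p_j$ of a nilpotent block of size $\ge 2$ into its variables, which exhibits two entries lying in the lower ring at the same row. For $M_k$ there are only $r_k\le a_k$ columns, so scroll blocks are allowed, and the same argument gives at most one nilpotent block.

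\textbf{Step 3: primality.} By \Cref{thm_CM}, $S/I_2(M)$ is Cohen--Macaulay of the expected codimension. First I will show that $x_{0,1}$, the top-left entry of the scroll block $M_0$, is a nonzerodivisor. For this it suffices to check, using the normal form, that setting $x_{0,1}=0$ drops the dimension by one. Then it is enough to show that $I_2(M)S_{x_{0,1}}$ is prime. Column operations using the invertible entry $x_{0,1}$ turn the ideal into one generated by
\[
f_{2,j}-\frac{x_{0,2}}{x_{0,1}}f_{1,j}.
\]
From the block normal forms, each of these generators can be solved for a distinct variable of the highest degree appearing in its column: the ``next'' variable of a scroll or Jordan block, or the new variable of the size-one nilpotent block. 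The localized quotient is therefore a localized polynomial ring, hence a domain, as in \Cref{profile_not_unique}.
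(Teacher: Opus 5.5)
Your Step 1 and the lower-ring height count in Step 2 are fine, but the height count cannot finish the structural part. The step you postpone as ``the main obstacle'' is where the real content is.

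At the row $[a:b]$, a nilpotent block on $l\ge 1$ variables has linear parts $bx_1,\ ax_1+bx_2,\ \dots,\ ax_{l-1}+bx_l,\ ax_l$. These have rank exactly $l$ for \emph{every} $[a:b]$. So the block contributes exactly one generalized entry lying in the lower ring at every row, and no choice of row or coordinates produces two.

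A lone zero column behaves the same way, and you have not excluded it. You rule it out only when a Jordan block is present, and then leave it out of the column count. So, e.g., $M_i=(\text{zero column}\mid\text{one scroll block on all } a_i\text{ variables})$ passes all your tests.

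One extra lower-ring element per row is exactly what the height bound allows. So your fallback (``two entries in the lower ring at the same row'') cannot succeed. What is missing is an argument that this single extra element is a zero divisor modulo the lower entries at \emph{some} row.

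The paper gets this as follows. It reduces the extra entries modulo $M_0$ to multiples of $x_{0,1}^{m_1}$ and passes to the row $R_{[1:1/c]}$. It then shows regularity fails at the nonzero roots of a nonzero polynomial in $c$: namely $f(c)c+1$, resp.\ a $2\times 2$ determinant. These roots exist because $\kk$ is algebraically closed.

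Equivalently, the $M_0$-part of $R_{[a:b]}$ has common zero $\gamma(a,b)=(b^{a_0-1},-ab^{a_0-2},\dots,(-a)^{a_0-1})$. Substituting $\gamma(a,b)$ into the extra element yields a binary form of positive degree (or zero), which must vanish somewhere on $\bfP^1$. Without an argument of this kind, neither nilpotent blocks with $\ge 2$ columns nor zero columns are excluded, in $M_i$ for $i<k$ or in $M_k$.

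Your Step 3 also fails as written. Your reduction to the localization, using that $x_{0,1}$ is a nonzerodivisor, is fine; the problem is your description of the localized ring.

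After inverting only $x_{0,1}$, the Jordan-block generators cannot be solved for fresh variables. Put $u=x_{0,2}/x_{0,1}$. The block $J(\epsilon)$ on $x_1,\dots,x_l$ gives $x_{j+1}+(\epsilon-u)x_j+p_j$ for $j<l$, and $(\epsilon-u)x_l+p_l$ for the last column, which has no ``next'' variable. The coefficient matrix of the block has determinant $(\epsilon-u)^l$, which is not a unit in $S_{x_{0,1}}$.

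Already for the paper's curve $C_2\subset\bfP(1,1,2,2)$, written as $\begin{pmatrix} x_1 & y_1 & x_2^2\\ x_2 & x_1^2 & y_2\end{pmatrix}$, the localization is $\kk[x_1^{\pm1},x_2,y_1]/(x_1^3-x_2y_1)$. This is a domain, but not one obtained by solving for $y_1$. In general you are left with relations $(\epsilon-u)^l x_1=h$, whose primality depends on the $p$'s.

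The paper avoids this in three steps:
\begin{itemize}
\item It proves irreducibility through the incidence variety $\widetilde X\to\bfP^1$. This map is flat by miracle flatness, and its general fibers are linear spaces after a coordinate change, so $\widetilde X$ and hence $X$ are irreducible.
\item It deduces from Cohen--Macaulayness that $I_2(M)$ is primary.
\item It gets generic reducedness by localizing at the prime $Q$ generated by the first-row entries. There $x_{0,a_0}$ is a unit and $S_Q/I_2(M)S_Q$ is a regular local ring.
\end{itemize}
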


\begin{remark}\label{rmk_max_wt}
The proof of the theorem holds also if the index $k$ is replaced by some $k'<k$. Indeed if this is the case, then none of the variables of degree $>m_{k'}$ appear in the matrix at all, so the subscheme is a cone over a scroll in a smaller weighted projective space.
\end{remark}

\begin{proof}
We use the notation introduced in \Cref{matrix_decomposition_notation}, and we note in particular that a degree $i$ weighted scroll block involving $\ell$ variables of degree $i$ has $\ell-1$ columns, a degree $i$ weighted Jordan block involving $\ell$ variables of degree $i$ has $\ell$ columns, and a degree $i$ weighted nilpotent block involving $\ell$ variables of degree $i$ has $\ell+1$ columns unless $\ell = 1$, in which case weighted nilpotent block may have just a single column.

Since $M_0$ has $a_0-1$ columns and involves only variables of degree $1$, it must consist of a single scroll block. After a change of coordinates we may assume
\begin{align} \label{scroll_block_1}
M_0 = \begin{pmatrix} x_{0,1} & x_{0,2} & \cdots & x_{0,a_0-2} & x_{0,a_0-1}\\ 
x_{0,2} & x_{0,3}& \cdots &x_{0,a_0-1}& x_{0,a_0}
 \end{pmatrix}.
\end{align}

Let $c^\text{scr}_1,c^\text{scr}_2,\dots,c^\text{scr}_{\alpha_1}$ be the sizes of all the scroll blocks of $M_1$, $c^{\text{jor}}_1,c^{\text{jor}}_2,\dots,c^{\text{jor}}_{\alpha_2}$ be the sizes of all the Jordan blocks of $M_2$, $c^{\text{nil}}_1,c^{\text{nil}}_2,\dots,c^{\text{nil}}_{\alpha_3}>1$ the sizes of nilpotent blocks of $M_1$ and $c^{\text{zer}}_1,c^{\text{zer}}_2,\dots,c^{\text{zer}}_{\alpha_4}$ the sizes of the zero blocks of $M_1$. Also, assume that there are $c^{\text{nil,1}}$ nilpotent blocks of size $1$. Because the number of columns in $M_1$ and the number of variables of degree $m_2$ are equal, we have 
\[
\sum_{\text{jor, scr, nil, zer}}\sum_i c_i^j = a_1\geq \alpha_1+\sum_i c_i^{\text{scr}} + \sum_i c_i^{\text{jor}} + \sum_i c_i^{\text{nil}} - \alpha_3 + c^{\text{nil,1}}.
\]
In particular, the total size of all weighted zero blocks appearing is $\sum_i c_i^{\text{zer}} \geq \alpha_1-\alpha_3+c^{\text{nil,1}}$. Suppose there were a weighted zero block in $M_1$. 
Using column operations, we see that the polynomials in the second row of the weighted zero block of $M_1$ are just multiples of $x_{0,1}^{m_1}$. Since $M$ is pseudo $1$-generic, so is $(M_0|M_1)$ and this implies that there is at most one column in the weighted zero block and its entry in the second row is $x_{0,1}^{m_1}$. Permuting columns we may assume that $M$ has a submatrix of the form
$$
\begin{pmatrix} 
 x_{0,1} & x_{0,2} & \cdots & x_{0,a_0-2} & x_{0,a_0-1} & p \\ 
x_{0,2} & x_{0,3}& \cdots &x_{0,a_0-1}& x_{0,a_0} & x_{0,1}^{m_1}
 \end{pmatrix}.
$$
Write $p = dx_{0,a_0}^{m_1} + q$ where $\deg_{x_{0,a_0}}(q) < m_1$. If $d=0$ then the elements of the first row will not form a regular sequence, contradicting the pseudo 1-genericity of $M$. So assume $d \ne 0$ and 
consider the generalized row $R_{[1:\frac{1}{c}]}$. We will show that there exists a $c$ such that the entries $R_{[1:\frac{1}{c}]}$ do not form a regular sequence.
The ideal of the generalized row is
$$(x_{0,1}+c^{-1}x_{0,2},\ x_{0,2}+c^{-1}x_{0,3},\  \dots ,\  x_{0,a_0-1} + c^{-1}x_{0,a_0},\ p + c^{-1}x_{0,1}^{m_1})$$
Starting with $i=a_0$ and decrementing by one, we apply the change of coordinates $x_{0,i} \mapsto c(x_{0,i} - x_{0,i-1})$ to get the ideal 
$$(x_{0,2},x_{0,3}, \dots , x_{0,a_0}, \widetilde{p} + c^{-1}x_{0,1}^{m_1}).$$
The above is a regular sequence exactly when $\widetilde{p}$ does not contain the monomial $-c^{-1}x_{0,1}^{m_1}$. If we just look at coefficient of  $x_{0,1}^{m_1}$ in $\widetilde{p}$, as a polynomial in $c$, it looks like $\pm dc^{(a_0-1)m_1} + \text{lower order terms}$ since after the change of coordinates, each $x_{0,i}$ appearing in $p$ contributes to a $\pm c^{i-1}x_{0,1}$ in $\widetilde{p}$ and this contribution is maximized by the monomial $dx_{0,a_0}^{m_1}$. 
In particular, the coefficient of $x_{0,1}^{m_1}$ is a non-zero polynomial $f(c)$ over $\kk$. 
If $f(c) = -c^{-1}$, then $\widetilde{p}$ contains the term $-c^{-1}x_{0,1}^{m_1}$, in which case $M$ fails to be pseudo 1-generic. 
But $f(c) = -c^{-1}$ if and only if $f(c)c+1=0$, and the latter has a non-zero solution in $\kk$. 
Thus, $M$ is not pseudo 1-generic, which is a contradiction. Thus we may assume that $M_1$ has no weighted zero blocks. This implies $\alpha_3-c^{\text{nil,1}} \geq \alpha_1$, that is the number of scroll blocks is at most the number of nilpotent blocks of size at least two.

Now suppose there were a nilpotent block of size at least two, and that after permuting columns $M$ had a submatrix of the form
$$
\begin{pmatrix} 
 x_{0,1} & x_{0,2} & \cdots & x_{0,a_0-2} & x_{0,a_0-1} & p_0 & x_{1,1} & x_{1,2} & \cdots & x_{1,l-1} & x_{1,l} \\ 
x_{0,2} & x_{0,3}& \cdots &x_{0,a_0-1}& x_{0,a_0} & x_{1,1} + p_1 & x_{1,2} + p_2 & x_{1,3} + p_3 & \cdots & x_{1,l}+ p_{l} & p_{l+1}
 \end{pmatrix}
$$
with $p_i \in S(1^{a_0})$. 
Starting with $i = l+1$ and decrementing by $1$, use the first $a_0-1$ columns to subtract off everything except a power of $x_{0,1}$ from each $p_i$ in the second row and then apply a weighted change of coordinates to make the corresponding entry in the first row a variable. This gives us the submatrix
$$
\begin{pmatrix} 
 x_{0,1} & x_{0,2} & \cdots & x_{0,a_0-1} & p & x_{1,1} & x_{1,2} & \cdots & x_{1,l-1} & x_{1,l} \\ 
x_{0,2} & x_{0,3}& \cdots & x_{0,a_0} & x_{1,1} + c_{1}x_{0,1}^{m_1}& x_{1,2} + c_{2}x_{0,1}^{m_1} & x_{1,3} + c_{3}x_{0,1}^{m_1} & \cdots & x_{1,l}+ c_{l}x_{0,1}^{m_1}& c_{l+1}x_{0,1}^{m_1}
 \end{pmatrix}.
$$
We will once again show that this is not pseudo 1-generic by exhibiting a generalized row whose entries fail to be a regular sequence. 
Write $p = dx_{0,a_0}^{m_1} + q$ where $\deg_{x_{0,a_0}}(q) < m_1$. If $d=0$ then the first row itself will not form a regular sequence. Pseudo 1-genericity is similarly violated if $c_{l+1} \ne 0$. Consider the generalized row $R_{[1:\frac{1}{c}]}$ and, starting at $i = l$ and decrementing by one until $i=2$, apply the change of coordinates $x_{1,i} \mapsto c(x_{1,i} - x_{1,i-1}) - c_{i}x_{0,1}^{m_1}$. Then starting with $i=a_0$ and decrementing by one until $i=2$, apply the change of coordinates $x_{0,i} \mapsto c(x_{0,i} - x_{0,i-1})$. Since the nilpotent block has size at least two, this leaves us with the generalized row 
$$
\left(
x_{0,2},\ x_{0,3},\ \dots ,\ x_{0,a_0},\
c^{-1}x_{1,1} + ( c_{1}c^{-1} - f_1(c))x_{0,1}^{m_1},\
x_{1,2},\ \dots,\ x_{1,l},\  
(c^{-1}c_{l+1}+f_2(c))x_{0,1}^{m_1} -x_{1,1}
\right).
$$
Just as before, $f_1(c)$ is a non-zero polynomial in $c$ of degree $m_1a_0$ and $f_2(c)$ is linear in $c$. This is a regular sequence if and only if the matrix 
$$
\begin{pmatrix}
c^{-1}  & c_1c^{-1} -f_1(c) \\
-1 & c^{-1}c_{l+1} + f_2(c)
\end{pmatrix}
$$
is invertible. Since $f_1$ is non-zero,
the determinant of this matrix is a non-zero Laurent polynomial of degree $a_0+l-1$ in $c$ (note that $a_0 \geq 1$ and  $m_1 \geq 2$). In particular, there is some $c$ for which the matrix is not invertible, again contradicting pseudo 1-genericity. Thus, there are no nilpotent blocks of size at least $2$, which in turn implies that there are no scroll blocks in $M_1$.

So assume the nilpotent block had size one. If there were another nilpotent block, the first row would not form a regular sequence. Thus, $M_1$ is made up of Jordan blocks and possibly one nilpotent block of size one. So $(M_0 \mid M_1)$ looks like 
$$
\begin{pmatrix} 
 x_{0,1} & x_{0,2} & \cdots & x_{0,a_0-2} & x_{0,a_0-1}    
 & \text{Jordan} & \text{nilpotent}\\ 
x_{0,2} & x_{0,3}& \cdots &x_{0,a_0-1}& x_{0,a_0} 
& \text{blocks} & \text{block of size 1}
 \end{pmatrix}.
$$
Since the number of columns of $M_1$ is equal to the number of variables of degree $m_1$, every variable of degree $m_1$ appears in $M_1$ and thus appears in any generalized row of $M$. In particular, there is a change of coordinates leaving the variables of degree $\ne m_1$ fixed such that, after a change of coordinates, the generalized row of $M$ contains the variables of degree $m_1$. It follows that $(M_0 \mid M_1 \mid M_2)$ is pseudo 1-generic iff $(M_0 \mid M_2)$ is pseudo $1$-generic. By induction, we see that each  $M_i$ is made up of Jordan blocks and possibly one single nilpotent block for $1\leq i \leq k-1$. Similarly, the induction shows that $M_k$ cannot have a weighted zero blocks and nilpotent blocks of size two or more. However, since we don't require $M_k$ to have $a_k$ columns, we can also have scroll blocks.

We have shown that $M$ is made up of scroll blocks, nilpotent blocks of size one and Jordan blocks. We will now use this fact to show that $I_2(M)$ is prime. With notation as in \Cref{thm_CM}, consider the map $\pi:\widetilde{X} \to \bfP^1$ and note that by \Cref{XtildeCM},  $\widetilde{X}$ is Cohen--Macaulay. Since $\pi$ has equidimensional fibers, and maps a Cohen--Macaulay scheme to a smooth scheme, miracle flatness implies that $\pi$ is flat. In particular, $\pi$ is an open map. If $a,b \ne 0$, the ideal defined by the generalized row $R_{[a:b]}$ is projectively equivalent to a an ideal defined by variables, and thus is irreducible. This implies that $\pi^{-1}(y)$ is irreducible for $y \ne [0:1],[1:0]$. Since $\pi$ is an open map and it follows that $\widetilde{X}$ is irreducible \cite[Tag 004Z]{stacks_project} and, in particular, its image in $\bfA(\bfw)$, which is $X$, is also irreducible. Thus, $\sqrt{I}$ is a prime ideal. Since $S/I$ is Cohen--Macaulay, $I$ is a primary ideal.

After adding a multiple of the second row to the first and changing coordinates, the structural statement we have already shown allows us to assume that the first row of $M$ is made of distinct variables.
Let $Q \subseteq S(\bfw)$ be the ideal generated by the entries of the first row and note that it is prime. Let $I := I_2(M)$ and note that  $I \subseteq Q$. We will now show that $IS(\bfw)_{Q}$ is a prime ideal. Since $x_{0,a_0} \notin Q$, $x_{0,a_0}$ is a unit in $I_2(M)S(\bfw)_{Q}$ and thus we can apply invertible column operations to put $M$ into the form
$$
\begin{pmatrix}
p_1 & \cdots & p_{l-1} & p_{l} \\
0 & \cdots & 0 	& 1
\end{pmatrix}.
$$
Note that $IS_Q  = (p_1,\dots,p_l)S_Q$. Since the first row of $M$ was linearly independent modulo $Q^2S_{Q}$, so is the first row of the above matrix. Thus $S_Q/(p_1,\dots,p_n) = S_Q/IS_Q$ is a regular local ring and, in particular, a domain. Therefore $IS_Q$ is a prime ideal and since $I$ is a primary ideal, $I$ must also be prime. 
\end{proof}

We will prove in \cref{noncone_degree} that the scrolls with profile as in \cref{thm_minDegStructure} have minimal degree. Though we defer this result, we will use some consequences of it to further explore the one-dimensional case.

\begin{cor} \label{cor_smooth_curves} Let $M$ be a $1$-generic $2\times r$ matrix over $S(\bfw)$ such that $X = V(I_2(M))$ is one-dimensional. Then $X$ is integral and smooth. 
\end{cor}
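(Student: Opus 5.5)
The plan is to reduce to \Cref{thm_minDegStructure} by a count of columns, and then obtain smoothness from the minimal degree theory of \Cref{minimal_rational}.

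First I pin down the shape of $M$. By \Cref{thm_CM}, $X = V(I_2(M))$ has codimension $r-1$ in $\bfP(\bfw)$. Since $\dim \bfP(\bfw) = n$ and $\dim X = 1$, we get $r = n$. The profile of a $1$-generic matrix is obtained by deleting entries from $(w_0,\dots,w_n)$, which has $n+1$ entries, so the profile of $M$ comes from deleting exactly one weight. By \Cref{chop_first}, $M$ has at most $a_0-1$ columns of degree $1$, so the deleted weight must be one of the weight-$1$ entries. Hence the profile is exactly
\[
(1^{a_0-1}, m_1^{a_1},\ldots,m_{k-1}^{a_{k-1}}, m_k^{a_k}),
\]
which is the profile in \Cref{thm_minDegStructure} with $r_k = a_k$. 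That theorem then says $I_2(M)$ is prime, so $X$ is integral. \Cref{thm_CM} shows $I_2(M)$ has the expected codimension and gives the Cohen--Macaulay property. Note that divisibility is not needed for this step, since \Cref{thm_minDegStructure} does not assume it.

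For smoothness I would first observe that $X$ is non-degenerate. Every entry of $M$ lies in the irrelevant ideal $\mfm=(x_0,\dots,x_n)$, so every $2\times 2$ minor lies in $\mfm^2$, and therefore $I_X = I_2(M) \subseteq \mfm^2$. In the divisible case, the degree computation deferred to \cref{noncone_degree} says that a scroll with the above profile has degree equal to the bound \eqref{degree_bound_divisible}. With $d=1$, this gives $\deg(X) = a_0-1+\sum_i a_i/m_i$. So $X$ is a non-degenerate curve of minimal degree with $\dim X = 1 \le a_k$, and \Cref{minimal_rational} yields that $X$ is rational and smooth. Tracing that argument, the closed immersion $\psi$ of \Cref{m_1va} carries $X$ isomorphically onto a minimal degree curve in a space with one fewer large weight, and inductively onto a rational normal curve.

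The main obstacle is the case where $\bfP(\bfw)$ is not divisible, since \Cref{m_1va} and \Cref{minimal_rational} use divisibility. There I would argue directly from the structure theorem. After row operations and coordinate changes, the first row of $M$ consists of distinct variables, so on the chart $x_{0,a_0}\neq 0$ (and symmetrically $x_{0,1}\ne0$) the minors let one solve for each remaining variable as a rational function of a single parameter. This is the same localization used at the end of the proof of \Cref{thm_minDegStructure}, and on each chart it exhibits $X$ as a graph over a line. Since the weight-$1$ coordinates $x_{0,1},\dots,x_{0,a_0}$ have no common zero on $X$ (their vanishing would force all minors of the scroll block, and hence the whole first row, to vanish), these charts cover $X$. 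On each chart, $X$ is the quotient of a smooth affine curve by a trivial stabilizer, because a weight-$1$ coordinate is nonzero. Hence $X$ is smooth.
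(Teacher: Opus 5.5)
In the divisible case your argument is the paper's argument. The column count together with \Cref{chop_first} forces the profile $(1^{a_0-1},m_1^{a_1},\dots,m_k^{a_k})$. Then \Cref{thm_minDegStructure} gives primality, \Cref{noncone_degree} gives minimal degree, and \Cref{minimal_rational} gives smoothness; your check that $I_2(M)\subseteq\mfm^2$ just makes non-degeneracy explicit. The paper's proof depends on divisibility through \Cref{noncone_degree} and \Cref{minimal_rational} exactly as yours does. So the corollary is really a statement about divisible spaces (compare Theorem~C), and the paper does not attempt the non-divisible case.

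Your extra argument for the non-divisible case has a genuine gap: the weight-$1$ charts need not cover $X$. That part of your argument never uses non-divisibility, so it can be tested on divisible examples, and it fails there.

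\begin{itemize}
\item The claim that $x_{0,1},\dots,x_{0,a_0}$ have no common zero on $X$ is false. Their vanishing kills the columns of $M_0$, but this forces nothing about the remaining columns, which can still have rank one.
\item \Cref{ex_sing_stacky_curve} is a counterexample. The curve cut out by the minors of $\begin{pmatrix} x_1 & y_1 & z_1\\ x_2 & x_1^2 & y_1^2\end{pmatrix}$ in $\bfP(1,1,2,4)$ contains $[0:0:0:1]$, where the matrix evaluates to $\begin{pmatrix}0&0&1\\0&0&0\end{pmatrix}$. This point is the image of $s=0$ under $[s:t]\mapsto[s^3t:s^7:s^2t^3:t^7]$.
\item Correspondingly, solving the minors gives rational functions with poles, so a weight-$1$ chart only shows $X$ as a graph over an \emph{open subset} of a line. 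Setting $x_1=1$ gives $y_1=x_2^{-1}$ and $z_1=x_2^{-3}$, and the pole at $x_2=0$ is exactly the missing point.
\item In general, such poles sit over the points where the parameterization of \Cref{stacky_parameterization} must be twisted by root sections coming from the Jordan and nilpotent blocks. At those points every weight-$1$ coordinate vanishes.
\end{itemize}

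These are precisely the delicate points. There you must use a chart $D_+(x_{i,j})\cong\bfA^n/\mu_{m_i}$ of a higher-weight variable. The preimage of $X$ in $\mcP(\bfw)$ can be singular there (it is cuspidal in the example above), so smoothness of $X$ is a statement about a cyclic quotient of a possibly singular germ. In the divisible case this is handled by the closed immersion $\psi$ of \Cref{m_1va} inside \Cref{minimal_rational}. Without divisibility you would need a real local analysis at the points of $X\cap V(x_{0,1},\dots,x_{0,a_0})$. Otherwise you should restrict the statement to divisible $\bfP(\bfw)$, as the paper implicitly does.
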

\begin{proof}
By \Cref{thm_CM}, $I_2(M)$ is Cohen--Macaulay so its codimension in $S(\bfw)$ is $r-1$. Keeping with the notation in \Cref{matrix_decomposition_notation}, since $S(\bfw)/I_2(M)$ is two-dimensional, we get that $r -1 = \sum a_i - 2$. By \Cref{chop_first}, the profile must be  $(1^{a_0-1}, m_1^{a_1},\dots,m_k^{a_k})$ and thus \Cref{thm_minDegStructure} implies that $X$ is integral. By \Cref{noncone_degree}, $X$ has minimal degree and the result follows from \Cref{minimal_rational}.
\end{proof}

The structural portion of the proof of \Cref{thm_minDegStructure} allows us to explicitly describe the blocks appearing in the matrix $M$; after a weighted change of coordinates, we assume that $M$ is of the following Kronecker--Weierstrass form. As in \cref{rmk_max_wt}, it is safe to replace all instances of the index $k$ in the following with some $k'<k$, but it suffices to prove for $k$.

\begin{definition}[{A Kronecker--Weierstrass form}] \label{KW_form_simplified}
Let $M = (M_0\mid M_1\mid\cdots\mid M_k)$ be a $1$-generic matrix with profile $(1^{a_0-1}, m_1^{a_1}, \ldots, m_{k-1}^{a_{k-1}}, m_{k}^{r_k})$.  

For $1 \leq i \leq k$ let
$$
N_i = 
\begin{pmatrix}
q_{i} \\
x_{i,1}
\end{pmatrix}.
$$
For $1 \leq i \leq k$, let $n_i$ denote the number of Jordan blocks appearing in $M_i$. For $1\leq j\leq n_i$, write
$$
J(\epsilon_{i,j})= 
\small
\begin{pmatrix} 
x_{i,a_{i,j}+1} & \cdots & x_{i,a_{i,j+1}-1} & x_{i,a_{i,j+1}} \\ 
x_{i,a_{i,j}+2} + \epsilon_jx_{i,a_{i,j}+1} + p_{a_{i,j}+1} & 
 \cdots & 
x_{i,a_{i,j+1}}+ \epsilon_jx_{i,a_{i,j+1}-1} + p_{i,a_{i,j+1}-1} & 
\epsilon_jx_{i,a_{i,j+1}} + p_{i,a_{i,j+1}}
 \end{pmatrix}
$$
where $0 \leq a_{i,1} < a_{i,2} < \cdots < a_{i, n_i} < a_{i,{n_{i}+1}}$, $a_{i,1} \in\{0,1\}$, and $a_{i,{n_i+1}} = a_i$ for all $i\neq k$.

Let $\sigma$ denote the number of scroll blocks in $M_k$ and for $1\leq j\leq \sigma$, write 
$$
S(j) = 
\small
\begin{pmatrix} 
x_{k,a_{k,n_k+j}+1} & \cdots & x_{k,a_{k,n_k+j+1}-2} & x_{k,a_{k,n_k+j+1}-1} \\ 
x_{k,a_{k,n_k+j}+2} + p_{a_{k,n_k+j}+1} & 
 \cdots & 
x_{k,a_{k,n_k+j+1}} + p_{k,a_{k,n_k+j+1}-1} & 
x_{k,a_{k,n_k+j+1}} 
 \end{pmatrix}
$$
where $ a_{k,n_k+1} < \cdots < a_{k,n_k+\sigma-1} < a_{k,{n_{k}+\sigma}} =  r_k$. 

We may also assume that 
\begin{itemize}
\item $p_{i,\star} \in \text{span}\{(x_{\ell,a_{\ell,j}+1})^{m_i/m_{\ell}}:  1 \leq \ell \leq i-1\text{ and } j \geq 1 \}$, and
\item $q_{i,1} \in \text{span}\{x_{\ell,a_{\ell,1}}^{m_i/m_{\ell}}: 1 \leq \ell \leq i-1\} \cup \{x_{\ell,1}^{m_i/m_{\ell}}: m_\ell \mid m_i \text{ and } 2 \leq \ell \leq i-1\}$.
\end{itemize}

The matrix $M =(M_0\mid M_1\mid\cdots\mid M_k)$ is in \textbf{Kronecker--Weierstrass form} if
\begin{enumerate}
\item The first block is
$$
M_0 = 
\begin{pmatrix} 
 x_{0,1} & x_{0,2} & \cdots & x_{0,a_0-2} & x_{0,a_0-1} \\ 
x_{0,2} & x_{0,3}& \cdots &x_{0,a_0-1}& x_{0,a_0} 
 \end{pmatrix}.
$$
\item For $1\leq i\leq k-1$, if $a_{i,1} = 0$ 
$$
M_i = 
\begin{pmatrix}
J(\epsilon_{i,1})&  \cdots &  J(\epsilon_{i,n_i})
\end{pmatrix}
$$  
and if $a_{i,1} = 1$
$$
M_i = 
\begin{pmatrix}
N_i & J(\epsilon_{i,1})&  \cdots &  J(\epsilon_{i,n_i})
\end{pmatrix}.
$$  
\item If $a_{k,1} = 0$,
$$
M_k =
\begin{pmatrix}
J(\epsilon_{k,1}) & \cdots & J(\epsilon_{k,n_k}) & S(1) & \cdots & S(\sigma)
\end{pmatrix}
$$
and if $a_{i,1} = 1$
$$
M_k = 
\begin{pmatrix}
N _k &J (\epsilon_{k,1}) & \cdots & J(\epsilon_{k,n_k}) & S(1) & \cdots & S(\sigma)
\end{pmatrix}.
$$  
\end{enumerate}
\end{definition}

\begin{remark} If $S(\bfw)$ is a divisible weighted projective space, there exists a $1$-generic matrix $M$ with profile $(1^{a_0-1}, m_1^{a_1}, \ldots, m_{k-1}^{a_{k-1}}, m_{k}^{r_k})$ for every valid sequence $a_{i,j}$ appearing in \Cref{KW_form_simplified}. That is, a $1$-generic matrix exists for any valid collection of Jordan blocks (with prescribed $\epsilon_{i,j}$ values), nilpotent blocks of size one, and scroll blocks in degree $m_k$.
\end{remark}

\begin{remark} \label[remark]{not_proj_equiv}The Kronecker--Weierstrass form is not determined up to projective equivalence. For example, in $\bfP(1^2,2^2)$ , consider the ideal defined by the $2\times2$ minors of the following $1$-generic matrix:
$$
\begin{pmatrix}
x_1 & y_1 & y_2 \\
x_2 & y_2 + \epsilon y_1 & \epsilon y_2 + x_1^2 
\end{pmatrix}.
$$
The ideal is independent of $\epsilon \in \kk$.
More generally,  one can perform a row operation followed by a change of coordinates to always assume that one of the $\epsilon_{i,j}$ parameters equals zero. 
\end{remark}

The curves defined by $1$-generic matrices are rational \cite[Corollary 6.9(b)]{eisenbud:syzygies}, which naturally leads to the question of their parameterization. It would be beneficial to define these parameterizations globally; however, because these curves lie in a weighted projective space, the morphism from $\mathbf{P}^1$ is not always determined by a single line bundle and sections in the same manner as maps to $\mathbf{P}^n$. However, we can instead provide a map from a stacky curve of genus zero to the weighted projective \emph{stack} $\mcP(\bfw)$, whose image under the coarse space morphism to $\bfP(\bfw)$ is our minimal degree curve. To do this, we use the root stack construction. For background on the root stack construction, we refer the reader to \cite{Olsson2016AlgebraicSA}; we introduce here only the notation necessary to state and prove our result. 
We remark that the use of stack-theoretic machinery is restricted to the remainder of this section. This application serves as a friendly invitation to the utility of stacks in describing concrete phenomena that arise quite naturally in nonstandard graded commutative algebra.

\begin{notation}
For a curve $C$ and an effective Cartier divisor $D$, we denote by $\sqrt[r]{(C,D)}$ the \textbf{$r^{th}$ root stack} along $D$. The result of this construction is essentially to introduce stacky structure along $D$, where the points of $D$ are replaced by stacky points of order $r$. If $r$ is invertible and $C$ is smooth, the result is a tame Deligne--Mumford stack with coarse space $\pi\colon \sqrt[r]{(C,D)} \to C$. For us, a crucial feature of this construction is the existence of a line bundle $\mcL = \sqrt[r]{\mcO_C(D)}$ on $\sqrt[r]{(C,D)}$ such that $\mcL^{\otimes r} = \pi^*\mcO_C(D)$. The root stack construction can be iterated; that is, for disjoint effective divisors $D,D'$, we can form the stack $\sqrt[r]{(C,D)}\times_C\sqrt[r']{(C,D')}$ whose coarse space is $C$, and where stacky structure of order $r$ is introduced along $D$ while stacky structure of order $r'$ is introduced along $D'$. We will only invoke the case where $C = \bfP^1$ and each divisor $D$ is a single point.  
\end{notation}

\begin{prop} \label[prop]{stacky_parameterization} Let $\bfP(\bfw)$ be a divisible weighted projective space. Let $C \subseteq \bfP(\bfw) $ be a curve cut out by the minors of a 1-generic matrix with two rows, and let $\mathcal{C}$ be the preimage of $C$ in $\mathcal{P}(\bfw)$, under the coarse space morphism.
Then there is an injective morphism from a smooth stacky curve to $\mathcal{C}$.
\end{prop}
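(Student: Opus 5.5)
We will construct the parameterization explicitly from the Kronecker--Weierstrass form. By \Cref{cor_smooth_curves} and its proof, the profile of $M$ is $(1^{a_0-1},m_1^{a_1},\dots,m_k^{a_k})$, so by \Cref{thm_minDegStructure} we may put $M$ in the form of \Cref{KW_form_simplified}. Here $M_0$ is a classical scroll block, and each $M_i$ with $i\ge 1$ consists of Jordan blocks $J(\epsilon_{i,j})$ together with possibly one nilpotent block $N_i$ of size one. In a curve with full profile there are no scroll blocks in $M_k$, since $r_k=a_k$.

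The points of $C$ with a given ratio $[s:t]$ between the two rows are those where the generalized row $R_{[t:-s]}$ vanishes. We solve the resulting equations inductively in the degree. First, $M_0$ forces $x_{0,j}=s^{a_0-j}t^{j-1}$, the rational normal curve of degree $a_0-1$, with the whole row scaled by a parameter. Now suppose the coordinates of degree $<m_i$ have been found as forms in $s,t$ and in previously adjoined roots. Then a Jordan block $J(\epsilon_{i,j})$ of size $\ell$ gives a triangular linear system of the shape $t\,x_{i,\cdot}=s(x_{i,\cdot+1}+\epsilon x_{i,\cdot}+p)$, with last equation $t\,x_{i,\ell}=s(\epsilon x_{i,\ell}+p_\ell)$. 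Solving it divides by $(t-\epsilon s)^{\ell}$. To clear this denominator we scale the degree-$1$ coordinates by a new section $v$ with $v^{m_i}=(t-\epsilon s)$. Since the lower-degree coordinates have weighted degree dividing $m_i$ by divisibility, this rescaling is compatible with the $\mathbf{G}_m$ action. A nilpotent block $N_i$ plays the same role at $\epsilon=\infty$, that is, at $[0:1]$, via the root of $t$ (or $s$). This produces, exactly as in the $C_2$ example of the introduction, a map
\[
\mcC_{\mathrm{sm}}=\sqrt[r_1]{(\bfP^1,D_1)}\times_{\bfP^1}\cdots\times_{\bfP^1}\sqrt[r_N]{(\bfP^1,D_N)}\longrightarrow \mcP(\bfw),
\]
where the $D_j$ run over the distinct eigenvalues $\epsilon_{i,j}$ (and $[0:1]$ if some $N_i$ occurs), and each $r_j$ is the largest $m_i$ whose block has that eigenvalue. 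The coordinates are sections of $\pi^*\mcO(e)\otimes\bigotimes\sqrt[r_j]{\mcO(D_j)}^{\otimes b_j}$, with the degree-$m_i$ coordinates being the $m_i$-th tensor power of the line bundle carrying the degree-$1$ coordinates. This is what gives a morphism to $\mcP(\bfw)$. The root stack is smooth because $\bfP^1$ is smooth and the $D_j$ are distinct reduced points; we take $\mathrm{char}\,\kk$ prime to the $m_i$, or otherwise work with the root stack as a tame stack.

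Next we verify that the map lands in $\mcC$ and is injective. Landing in $\mcC$ holds by construction: the vector of coordinates kills $R_{[t:-s]}$, so all $2\times2$ minors vanish. Basepoint-freeness must also be checked, namely that the sections never simultaneously vanish. Away from the $D_j$ the degree-$1$ coordinates $s^{a_0-j}t^{j-1}$, times units, do not all vanish. At a point of $D_j$ the top-degree Jordan entry with eigenvalue $\epsilon_j$ is nonzero by the triangular solution. For injectivity, the map is injective on points because the composite to $C$ recovers $[s:t]$ as the ratio of the two rows, which is generically $x_{0,1}/x_{0,2}$. It is also injective on automorphism groups because $\mu_{r_j}$ acts faithfully through the coordinate carrying $v$ to the first power.

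The main obstacle is the bookkeeping of the polynomials $p_{i,\star}$ and $q_i$. These mix lower-degree coordinates, which already involve roots, into higher blocks. One must check that after substitution each Jordan-block solution acquires denominators only of the form $(t-\epsilon s)^{\ast}$ of order exactly compatible with the root $v^{m_i}$. One must also check that roots introduced at lower degree for the same eigenvalue are absorbed consistently; this is where divisibility $m_{i-1}\mid m_i$ is used. We expect to handle this by induction on $i$, tracking the order of vanishing at each $D_j$ of every coordinate, using the normalization of the $p_{i,\star}$ in \Cref{KW_form_simplified}.
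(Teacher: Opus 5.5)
Your construction is the paper's own. The shared steps are: Kronecker--Weierstrass form; $x_{0,j}=s^{a_0-j}t^{j-1}$; back-substitution through each Jordan block, giving $(t-\epsilon s)$-denominators, plus $s$-denominators from the $N_i$; clearing these with powers of root sections; and reading off a basepoint-free weighted series of a line bundle on the root stack.

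Two of your choices are tidier than the paper's. Using one root per distinct point, of order the largest $m_i$ carrying that eigenvalue, handles repeated $\epsilon_{i,j}$ directly. The paper instead first assumes the $\epsilon_{i,j}$ distinct and then only remarks that one must clear denominators minimally. Recovering $[s:t]$ as the ratio of the two rows is also sounder than the paper's appeal to $x_{0,1},x_{0,2}$, which both vanish over the $D_j$. The ratio is well defined because every variable appears as an entry of $M$ in Kronecker--Weierstrass form, so $M$ has rank exactly one at each point of $C$.

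There are two corrections.

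First, drop the claim that the map is injective on automorphism groups; it is false in general. In $\bfP(1,1,2,2)$ take $M=\begin{pmatrix} x_1 & y_1 & y_2\\ x_2 & y_2 & x_1^2\end{pmatrix}$, a single size-two Jordan block with $\epsilon=0$, and let $D=\{t=0\}$. Solving gives $y_2=s^3/t$ and $y_1=s^4/t^2$. With $v^2=t$, your recipe multiplies the degree-$1$ coordinates by $v^2$. The line bundle is then $\pi^*\mcO(1)\otimes\sqrt[2]{\mcO(D)}^{\otimes 2}\cong\pi^*\mcO(2)$, on which $\mu_2$ acts trivially. So your map from $\sqrt[2]{(\bfP^1,D)}$ factors through the coarse space; indeed $[st:t^2:s^4:s^3t]$ is already a morphism from $\bfP^1$, and no coordinate carries $v$ to the first power. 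In general the induced map on stabilizers at $D_j$ is $\zeta\mapsto\zeta^{b_j}$, which is injective only when $\gcd(b_j,r_j)=1$. If you want a representable map, replace $r_j$ by $r_j/\gcd(b_j,r_j)$. The proposition, however, only needs injectivity on geometric points, which is all the paper proves.

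Second, nonvanishing of the top head at $D_j$ does not follow from the triangular solve alone. You need the pole order of that head at $D_j$ to be exactly what you cleared, i.e.\ the leading coefficient of $p_\ell$ along the parameterization at $D_j$ must be nonzero. This is precisely where $1$-genericity is used: regularity of the generalized row $R_{[-\epsilon:1]}$ forces it. Without this, $p_\ell$ could vanish there and the weighted series would acquire a basepoint.
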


Before we give the proof we first demonstrate with an example in hopes that it will help to guide the reader through the notation required in the general case.

\begin{example}\label{ex_stack_parameterization} Consider the curve $C \subseteq \bfP(1^3,2^3,4^3)$ whose associated $1$-generic matrix has exactly one Jordan block in degrees 2 and 4:
\[
\begin{tikzpicture}
\matrix (M) [matrix of math nodes,
             left delimiter=(, right delimiter=),
             column sep=6pt, row sep=6pt,
             nodes={anchor=center}] 
{
x_{0,1} & x_{0,2} & x_{1,1} & x_{1,2}                 & x_{1,3} & x_{2,1}                                              & x_{2,2}                                             & x_{2,3} \\
\vphantom{x_{0,1}^2} x_{0,2} & \vphantom{x_{0,1}^2}x_{0,3} &  \vphantom{x_{0,1}^2}x_{1,2} & \vphantom{x_{0,1}^2}x_{1,3}  &  x_{0,1}^2& x_{2,2} + x_{2,1} + x_{1,1}^2 & x_{2,3} +  x_{2,2}  &x_{2,3} +  x_{0,1}^4 + x_{1,1}^2\\
};

\coordinate (b12) at ($(M-2-2.east)!0.5!(M-2-3.west)$);
\coordinate (b34) at ($(M-2-5.east)!0.5!(M-2-6.west)$);

\draw[dashed] (b12 |- M-1-1.north) -- (b12 |- M-2-1.south);
\draw[dashed] (b34 |- M-1-1.north) -- (b34 |- M-2-1.south);

\draw[decorate,decoration={brace,mirror,amplitude=6pt}]
  (M-2-1.south west) -- (M-2-2.south east)
  node[midway,below=8pt] {$M_0$};
\draw[decorate,decoration={brace,mirror,amplitude=6pt}]
  (M-2-3.south west) -- (M-2-5.south east)
  node[midway,below=8pt] {$M_1$, $(\epsilon=0)$};
\draw[decorate,decoration={brace,mirror,amplitude=6pt}]
  (M-2-6.south west) -- (M-2-8.south east)
  node[midway,below=8pt] {$M_2$, $(\epsilon=1)$};

\end{tikzpicture}
\]
Naively, we attempt to parameterize $C$ using sections of a line bundle on $\bfP^1$ where the coordinates are $[s:t]$. Examining the $M_0$ block, we define the degree $1$ variables using sections of $L = \mcO_{\bfP^1}(2)$, setting $x_{0,1} = s^2$, $x_{0,2} = st$, and $x_{0,3} = t^2$.

We now seek sections of powers of $L$ to parameterize the degree $2$ and $4$ variables while satisfying the determinantal relations forced by $M$. 
For the degree $2$ variables, we start with the relation $x_{0,1}^3 = x_{0,2}x_{1,3}$ and then recursively obtain $x_{1,3} = s^5t^{-1}$, $x_{1,2} = s^{6}t^{-2}$ and $x_{1,1} = s^{7}t^{-3}$. Moving recursively through the degree $4$ variables, we find
\begin{align*}
x_{2,3}  &= \frac{x_{0,1}(x_{0,1}^4 + x_{1,1}^2)}{x_{0,2}-x_{0,1}}
= \frac{s^{10} + s^{16}t^{-6}}{(st-s^{2})}
= \frac{s^{9}t^{6}+s^{15}}{t^{6}(t-s)} \\
x_{2,2}  &= \frac{x_{0,1}(x_{2,3})}{x_{0,2}-x_{0,1}}
=\frac{x_{0,1}^6+x_{0,1}^2x_{1,1}^2}{(x_{0,2}-x_{0,1})^2}
=\frac{s^{12}+s^{18}t^{-6}}{(st-s^{2})^{2}}
=\frac{s^{10}t^6+s^{16}}{t^{6}(t-s)^{2}} \\
x_{2,1} &=  \frac{x_{0,1}(x_{2,2} + x_{1,1}^2)}{x_{0,2}-x_{0,1}}
= \frac{x_{0,1}\left(x_{0,1}^6 + x_{0,1}^2x_{1,1}^2+x_{1,1}^2(x_{0,2}-x_{0,1})^2\right)}{(x_{0,2}-x_{0,1})^3}
 =\frac{s^2 \left( s^{12}t^6 + s^{18} + s^{14}(st - s^2)^2 \right)}{t^6 (st - s^2)^3}\\
& \hspace{9.6cm} = \frac{\left( s^{11}t^6 + s^{17} + s^{15} (t - s)^2 \right)}{t^6 (t - s)^3}.
\end{align*}
In order to get an actual map, we need to clear denominators. In order to clear the $(t-s)^3$ from the denominator of $x_{2,1}$, we scale all degree $4$ variables by $(t-s)^3$. This in turn scales all degree $2$ variables by $(t-s)^{3/2}$ and all degree $1$ variables by $(t-s)^{3/4}$. To resolve this issue, we pass from $\bfP^1$ to the root stack $\sqrt[4]{(\bfP^1, [1:1])}$ and let $v$ be a local section of the root line bundle $\sqrt[4]{\mcO_{\bfP^1}([1:1])}$ so that $v^4 = t-s$. Now we can happily replace all instances of $(t-s)^{1/4}$ by $v$. In the same vein, in order to remove the $t^6$ from the denominator of all of the degree $4$ variables, we scale each one by $t^6$, simultaneously scaling the degree $2$ variables by $t^3$ and the degree 1 variables by $t^{3/2}$. As before, we handle the root by passing to $\mcX = \sqrt[4]{(\bfP^1, [1:1])}\times_{\bfP^1}\sqrt[2]{(\bfP^1, [1:0])}$, letting $u$ be a local section of $\sqrt[2]{\mcO_{\bfP^1}([1:0])}$ so that $u^2 = t$.

Let $L'$ be the line bundle $\pi^*\left(\mcO_{\bfP^1}(2)\right)\otimes_\mcX \left(\sqrt[2]{\mcO_{\bfP^1}([1:0])}\right)^3 \otimes_\mcX \left(\sqrt[4]{\mcO_{\bfP^1}([1:1])}\right)^3$. The weighted series
\[
\{ s^2u^3v^3,\ stu^3v^3,\ t^2u^3v^3,\ s^7v^6,\ s^6tv^6,\ s^5t^2v^6,\ s^{11}t^6+s^{17} + s^{15}v^8,\ (s^{10}t^6+s^{16})v^4,\ (s^{9}t^6+s^{15})v^8\}
\]
defines a map
$
\mcX\to \mcP(1^3,\ 2^3,\ 4^3)
$
whose image, after composing with the coarse space morphism, is $C$.
\end{example}

\begin{proof}
Write $M$ in Kronecker--Weierstrass form, so that $M = (M_0 \mid M_1 \mid \cdots \mid M_k)$ and \[M_i = \begin{pmatrix}N_i & J(\epsilon_{i,1}) & \cdots & J(\epsilon_{i,n_i})\end{pmatrix} \text{ or } \begin{pmatrix} J(\epsilon_{i,1}) & \cdots & J(\epsilon_{i,n_i})\end{pmatrix}\] as in \Cref{KW_form_simplified}. First, assume that all of the $\epsilon_{i,j}$ are distinct. We will define a map to $\mcC$ from the root stack 
$
\sqrt[m_k]{(\bfP^1, [0:1])}\times_{\bfP^1} \sqrt[m_1]{(\bfP^1,[1:\epsilon_{1,1}])}\times_{\bfP^1}\dots\times_{\bfP^1} \sqrt[m_1]{(\bfP^1, [1:\epsilon_{1,n_1}])}\times_{\bfP^1} \dots \times_{\bfP^1}\sqrt[m_k]{(\bfP^1, [1:\epsilon_{k,n_k}])}.
$
Denote this stack by $\mcX$ and let $[s:t]$ be the coordinates on $\bfP^1$, its coarse space. To begin, we set $x_{0,i} = s^{a_0-i}t^{i-1}$, which clearly satisfies all determinantal relations coming from $M_0$. We work block by block, starting at the tail of each Jordan block and working our way  to the beginning, enforcing the determinantal relations. For a block $J(\epsilon_{i,j})$, the determinantal relations force
\[
x_{i,a_{i,j+1}} = \frac{x_{0,1}p_{a_{i,j+1}}}{x_{0,2}-\epsilon_{i,j}x_{0,1}}
\]
and for $1\leq \ell<a_{i,j+1}-a_{i,j}$ we have
\begin{align*}
x_{i,a_{i,j+1}-\ell} &= \frac{x_{0,1}}{x_{0,2}-\epsilon_{i,j}x_{0,1}}(x_{i,a_{i,j+1}-\ell+1} + p_{a_{i,j+1}-\ell})\\
&= \frac{x_{0,1}}{(x_{0,2}-\epsilon_{i,j}x_{0,1})^{\ell+1}} \sum_{b=0}^\ell (x_{0,2}-\epsilon_{i,j}x_{0,1})^b x_{0,1}^{\ell-b} p_{a_{i,j+1}-b}\\
&=\frac{1}{(s^{a_0-2})^{\ell+1}(t-\epsilon_{i,j}s)^{\ell+1}}\sum_{b=0}^\ell s^{a_0(\ell+1)-b}(t-\epsilon_{i,j}s)^{b}p_{a_{i,j+1}-b}\\
&=\frac{1}{(t-\epsilon_{i,j}s)^{\ell+1}}\sum_{b=0}^\ell s^{\ell+1-b}(t-\epsilon_{i,j}s)^{b}p_{a_{i,j+1}-b}
\end{align*}
where $p_{a_{i,j+1}-b}(s,t)$ has degree $m_i(a_0-1)$. We repeat this for all Jordan blocks. If a nilpotent block $N_i$ appears, we similarly set 
\[
x_{i,1} = \frac{tq_i}{s}.
\]

For each $\epsilon_{i,j}$, let $v_{i,j}$ be a local section of the root bundle $\sqrt[m_i]{\mcO_{\bfP^1}([1:\epsilon_{i,j}])}$ so that $v_{i,j}^{m_i} = (t-\epsilon_{i,j}s)$. We scale all the degree $m_i$ variables by 
$v_{i,j}^{(a_{i,j+1}-a_{i,j})m_i}$; 
the exponent here is just the size of the Jordan block $J(\epsilon_{i,j})$ multiplied by $m_i$. By homogeneity, this scales the degree $m_{i'}$ variables by 
$
v_{i,j}^{(a_{i,j+1}-a_{i,j})m_{i'}}.
$
This clears all powers of $(t-\epsilon_{i,j}s)$ from the denominators of all variables.
Note that this procedure will guarantee that for each $\epsilon_{i,j}$ there is a degree $m_i$ variable that is not divisible by $v_{i,j}$. 

After repeating for all $\epsilon_{i,j}$, each variable now has a denominator consisting only of powers of $s$ coming from the variables $x_{i,1}$ and polynomials thereof.
For each index $i$, let $\mu_i$ be the highest power of $s$ that can be factored out of the denominator of any of the $x_{i,\star}$ variables. Pick an index $i$ such that $\frac{\mu_i}{m_i} \geq \frac{\mu_{i'}}{m_i'}$ for all indices $i'$. 
Let $u$ be a local section of the root bundle $\sqrt[m_k]{\mcO_{\bfP^1}([0:1])}^{m_k/m_i}$ so that $u^{m_i} = s$.
To clear denominators, we scale the degree $m_i$ variables $x_{i,\star}$ by $s^{\mu_{i}} = u^{m_i\mu_i}$. By homogeneity, this scales the degree $m_{i'}$ variables by $u^{\mu_{i}m_{i'}}$. This clears powers of $s$ from every denominator. The choice of $\mu_i$ also guarantees the existence of at least one variable that is not divisible by $s$.

By construction, there is no common factor among all of the $x_{i,j}$. We have thus parameterized the $x_{i,j}$ by a basepoint free weighted series for the line bundle $L = \pi^*(\mcO(a_0))\otimes L'$ where $L'$ is a tensor product of powers of the root line bundles $\sqrt[m_k]{\mcO_{\bfP^1}([0:1])}$ and $\sqrt[m_i]{\mcO_{\bfP^1}([1:\epsilon_{i,j}])}$. This defines a map from $\mcX\to \mcP(\bfw)$ whose image after composing with the coarse morphism is $V(I_2(M))$.

In case not all of the $\epsilon_{i,j}$ are distinct, we must be slightly more careful not to introduce common factors. However, we can clear powers of $(1-\epsilon_{i,j}t)$ from denominators minimally as we did for powers of $s$ in order to ensure that the resulting weighted series is basepoint free.
This map is injective on geometric points by construction, as the variables $x_{0,1}, x_{0,2}$ completely recover $s$ and $t$.
\end{proof} 

Since weighted $1$-generic curves are smooth (\Cref{cor_smooth_curves}), it is natural to ask if the parameterization above is a closed immersion into $\mcP(\bfw)$. Unfortunately, this is usually not the case. In fact, the preimage of the $1$-generic curve in the weighted projective stack is usually a singular stack and the map in \Cref{stacky_parameterization} is a resolution of singularities. 

\begin{example}\label{ex_sing_stacky_curve} Consider the integral curve $C \subseteq \bfP(1,1,2,4)$ defined by
$$ I =
I_2
\begin{pmatrix}
x_1 & y_1  & z_1 \\
x_2 & x_1^2 & y_1^2
\end{pmatrix}
$$ 
Let $\mathcal{C} \subseteq \mcP(1,1,2,4)$ be its preimage. To see that $\mathcal{C}$ is singular, we consider the open substack $[U/\mathbf{G}_m]$ where $U = \text{Spec}(S_{z_1}/I)$. The quotient stack $[U/\mathbf{G}_m]$ is singular since 
$$
(S/I)_{z_1}  \simeq \frac{\kk[x_1,x_2,y_1,z_1^{\pm1}]}{(x_1^3-y_1x_2,x_1\frac{y_1^2}{z_1}-x_2,\frac{y_1^3}{z_1}-x_1^2)} \simeq \frac{\kk[x_1,y_1,z_1^{\pm 1}]}{(\frac{y_1^3}{z_1}-x_1^2)}
$$
defines a cuspidal singularity. 

Note that \Cref{stacky_parameterization} gives a parameterization $\mcP(1,4) \to\mathcal{C}$ via
$
[s:t] \mapsto [s^3t:s^7:s^2t^3:t^7].
$
It is not hard to show that the morphism is not a closed immersion since it does not separate tangent vectors. 
\end{example}

We end this section with two remarks on the structure of higher dimensional weighted scrolls. 

\begin{example}  As the dimension of the scroll increases, all the different blocks in \Cref{matrix_decomposition_notation} can appear. The number of blocks of a given type are subject to constraints analogous to the ones given in \Cref{thm_minDegStructure}. For an explicit example, here is a four dimensional weighted scroll in $\mathbf{P}(1^5,m^7,(mn)^3)$ whose $1$-generic matrix has a weighted Jordan, nilpotent and scroll blocks in degree $m$, and a Jordan block in degree $mn$:
\[
\begin{tikzpicture}
\matrix (M) [matrix of math nodes,
             left delimiter=(, right delimiter=),
             column sep=6pt, row sep=6pt,
             nodes={anchor=center}] 
{
 x_1 & y_1 & y_2 & y_4 & y_5 & x_4^m & y_6 & y_7 & z_1 & z_2 & z_3 \\
\vphantom{x_1^{m}} x_2 & \vphantom{x_1^{m}} y_2 & \vphantom{x_1^{m}} y_3 & \vphantom{x_1^{m}} y_5 & \vphantom{x_1^{m}} x_3^m & \vphantom{x_1^{m}} y_6 & \vphantom{x_1^{m}} y_7 & x_5^m &\vphantom{x_1^{m}} z_2+z_1 & \vphantom{x_1^{m}} z_3+z_2 & z_3+x_1^{mn} \\
};

\coordinate (b12) at ($(M-2-1.east)!0.5!(M-2-2.west)$);
\coordinate (b34) at ($(M-2-3.east)!0.5!(M-2-4.west)$);
\coordinate (b56) at ($(M-2-5.east)!0.5!(M-2-6.west)$);
\coordinate (b89) at ($(M-2-8.east)!0.5!(M-2-9.west)$);

\draw[dashed] (b12 |- M-1-1.north) -- (b12 |- M-2-1.south);
\draw[dashed] (b34 |- M-1-1.north) -- (b34 |- M-2-1.south);
\draw[dashed] (b56 |- M-1-1.north) -- (b56 |- M-2-1.south);
\draw[dashed] (b89 |- M-1-1.north) -- (b89 |- M-2-1.south);

\draw[decorate,decoration={brace,mirror,amplitude=6pt}]
  (M-2-1.south west) -- (M-2-1.south east)
  node[midway,below=8pt] {$M_0$};
\draw[decorate,decoration={brace,mirror,amplitude=6pt}]
  (M-2-2.south west) -- (M-2-3.south east)
  node[midway,below=8pt] {scroll};
\draw[decorate,decoration={brace,mirror,amplitude=6pt}]
  (M-2-4.south west) -- (M-2-5.south east)
  node[midway,below=8pt] {Jordan};
\draw[decorate,decoration={brace,mirror,amplitude=6pt}]
  (M-2-6.south west) -- (M-2-8.south east)
  node[midway,below=8pt] {nilpotent};
\draw[decorate,decoration={brace,mirror,amplitude=6pt}]
  (M-2-9.south west) -- (M-2-11.south east)
  node[midway,below=8pt] {Jordan};

\end{tikzpicture}
\]
\end{example}

\begin{example}\label{ex_sing_scroll}  If we are interested in smooth determinantal scrolls, we usually want to avoid weighted zero blocks. For example, consider the  surface scroll $X$ in $\bfP(1^3,2^2)$ defined by
$$I_2
\begin{pmatrix}
x_1 & x_2^2 & y_1 \\
x_2 & x_3^2 & y_2
\end{pmatrix}.
$$ 
Consider the smooth affine open $U = D(x_1) \simeq \mathbf{A}^4$. The coordinate ring of $X|_{U}$ is
$$
\frac{\kk[x_2,x_3,y_1,y_2]}{(x_3^2-x_2^3, y_2-y_1x_2 , y_2x_2^2-y_1x_3^2 )} 
= \frac{\kk[x_2,x_3,y_1,y_2]}{(x_3^2-x_2^3,y_2-y_1x_2, y_2x_2^2 - y_1x_3^2)}
= \frac{\kk[x_2,x_3,y_1]}{(x_3^2-x_2^3)}.
$$ 
In particular, $X|_U$ is a cone over a cuspidal curve.
\end{example}

\section{Degree and syzygies of determinantal scrolls in divisible spaces}
In this section, we compute the degree of a determinantal scroll in a divisible weighted projective space. Using this, we show that the minimal degree bound obtained in \Cref{prop_degree_bound} is sharp, and we classify the scrolls that attain this bound. We also compute the regularity and the “linearity” of the free resolution. The results of this section rely on a Hilbert series computation of the homogeneous coordinate ring of a weighted determinantal scroll via the Eagon–Northcott complex.

\subsection{Degrees of determinantal scrolls}
\begin{prop} \label[prop]{lemma_scroll_hs} Let $\bfP(\bfw) = \bfP(1^{a_0}, m_1^{a_1},\ldots, m_k^{a_k})$ be a divisible weighted projective space. The Hilbert series of a determinantal scroll $X \subseteq \bfP(\bfw)$ with profile $(1^{r_0},m_1^{r_1},\dots,m_k^{r_k})$ is
\begin{align} \label{HilbertSeries}
HS(t) = \frac{\sum_{i=0}^k r_it^{m_i}\left(1+t^{m_i}+t^{2m_i} + \cdots + t^{\left(\frac{m_k}{m_i}-1\right)m_i} \right) + 1- t^{m_k}}{\left(\prod_{j=0}^{k-1}(1-t^{m_j})^{a_j-r_j}\right)(1-t^{m_k})^{a_k-r_k+1}}.
\end{align}
\end{prop}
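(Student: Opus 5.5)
The plan is to compute the Hilbert series from the Eagon--Northcott resolution. By \Cref{thm_CM}, since $M$ is 1-generic (hence pseudo 1-generic), the Eagon--Northcott complex of the $2\times q$ matrix $M$, with $q=\sum_i r_i$, is a minimal free resolution of $S(\bfw)/I_2(M)$. I will view $M$ as a map $F=\bigoplus_j S(-d_j)\to G=S\oplus S$, where $d_j$ runs over the profile entries. Since all entries of a column have the same degree, both rows have the same degree shift, so $G=S^2$ in degree $0$. Then $HS(t)$ is determined by the graded ranks of $\wedge^{\ell+1}F\otimes D_{\ell-1}(G^*)$ for $\ell\ge 1$, together with the term $S$ in homological degree $0$.

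The key step is to encode these graded pieces with generating functions. Put $e_\ell$ for the elementary symmetric polynomial in the monomials $t^{d_j}$, so that $\wedge^{\ell}F$ contributes $e_\ell(t^{d_1},\dots,t^{d_q})$. The divided power $D_{\ell-1}(G^*)$ has rank $\ell$ and lives in a single degree. Hence the numerator of $HS(t)$ over $\prod_i(1-t^{w_i})$ is
\[
K(t)=1+\sum_{\ell\ge1}(-1)^\ell\,\ell\, e_{\ell+1}(t^{d_j}).
\]
One checks that this equals $1-\frac{d}{dz}\Big|_{z=1}$ of something. More precisely, with $E(z)=\prod_j(1+zt^{d_j})$, we have $\sum_\ell (-1)^{\ell}\ell e_{\ell+1}$ expressed via $E(-1)$ and $E'(-1)$. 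Working this out gives
\[
K(t)=\prod_j(1-t^{d_j})+\sum_j t^{d_j}\prod_{j'\ne j}(1-t^{d_{j'}}).
\]
This is the standard formula for Eagon--Northcott with two rows; equivalently, $K(t)=\prod_j(1-t^{d_j})\bigl(1+\sum_j \frac{t^{d_j}}{1-t^{d_j}}\bigr)$.

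Next I divide by $\prod_i(1-t^{w_i})=\prod_{j}(1-t^{m_j})^{a_j}$. The factors $\prod_j(1-t^{d_j})=\prod_i(1-t^{m_i})^{r_i}$ cancel, which leaves
\[
HS(t)=\frac{1+\sum_i r_i\frac{t^{m_i}}{1-t^{m_i}}}{\prod_i(1-t^{m_i})^{a_i-r_i}}.
\]
The last step uses divisibility. Because $m_i\mid m_k$, I write $\frac{1}{1-t^{m_i}}=\frac{1+t^{m_i}+\cdots+t^{(m_k/m_i-1)m_i}}{1-t^{m_k}}$. Multiplying numerator and denominator by $1-t^{m_k}$ then gives exactly \eqref{HilbertSeries}.

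The main obstacle is the sign and index bookkeeping in the Eagon--Northcott ranks, namely getting the combinatorial identity for $K(t)$ right. I would verify it by induction on $q$ or by checking small cases, for example $q=2$, where $K=1-t^{d_1+d_2}$. Integrality of $X$ is not actually needed for this computation, only that \Cref{thm_CM} applies.
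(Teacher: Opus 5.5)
Your proposal is correct and is essentially the paper's proof: the paper also resolves $S(\bfw)/I_X$ by the Eagon--Northcott complex, reads off $\beta_{i,j}$ as $i$ times the coefficient of $t^jx^{i+1}$ in $\Phi(x,t)=\prod_\ell(1-t^{m_\ell}x)^{r_\ell}$, and evaluates $\Phi-\partial_x\Phi$ at $x=1$ to get exactly your $K(t)$, before using $m_i\mid m_k$ to clear the remaining denominators. Your identity $K=E(-1)+E'(-1)$ and the final rewriting via $\frac{1}{1-t^{m_i}}=\frac{1+t^{m_i}+\cdots+t^{(m_k/m_i-1)m_i}}{1-t^{m_k}}$ both check out.
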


\begin{proof} 
Let $M$ be the $1$-generic matrix whose maximal minors cut out $X$. By \cite[Thm A2.60]{eisenbud:syzygies}, the minimal free resolution of $S/I_X$ is the Eagon-Northcott complex associated to the map
$F:= \bigoplus_{i=0}^k S(-m_i)^{\oplus r_i} \overset{M}{\rightarrow} G:= S^2$. This complex is of the form 
$
0 \rightarrow E_{r} \rightarrow \cdots \rightarrow E_2 \rightarrow E_1 \rightarrow S \rightarrow 0
$
where the module $E_i$ is equal to $(\Sym_{i-1}(G))^\star\otimes \bigwedge^{i+1}F $ which expands as
$$ 
S^i \otimes 
	\left(\bigoplus_{s_1,\dots,s_k} \bigwedge^{i+1- \sum s_i} S^{r_0}(-1) \otimes \bigwedge^{s_1} S^{r_1}(-m_1) \otimes \cdots \otimes \bigwedge^{s_k} S^{r_k}(-m_k) \right).
	$$
From this we see that $\beta_{i,j}$ is $i$ times the coefficient of $t^jx^{i+1}$ in $\Phi(x,t) = \prod_{\ell=0}^k (1-t^{m_\ell}x)^{r_\ell}$ for $i \geq 1$. On the other hand, we can use the multinomial expansion of $\Phi$ to obtain,
\begin{align*}
\sum_i  (-1)^i \left(\sum_j \beta_{i,j}t^j\right)x^{i} & = 1 + \sum_i (-1)^i \left(\sum_j i[t^jx^{i+1}] \Phi(x,t)t^j\right)x^i \\
 & = 1+\sum_i (-1)^i i \left(\sum_{s_1,\dots,s_k} 
\binom{r_0}{i+1-s_1-\cdots -s_k}
\binom{r_1}{s_1}\cdots \binom{r_k}{s_k}t^{i+1-\sum s_\ell + \sum s_\ell m_\ell}\right)x^{i} \\
&= -\frac{\partial}{\partial x} \Phi(x,t) + \Phi(x,t)   \\
&=  -\sum_{i=0}^kr_it^{m_i}(1-t^{m_i}x)^{r_i-1}\prod_{j \ne i}(1-t^{m_j-1}x)^{r_j} + \prod_{j}(1-t^{m_j}x)^{r_j} \\
& = \prod_{j}(1-t^{m_j}x)^{r_j-1}\left(\sum_{i=0}^k r_it^{m_i}\prod_{j \ne i}(1-t^{m_j}x) + \prod_{j}(1-t^{m_j}x)\right). 
\end{align*}
Then the Hilbert series of the scroll is 
$$
\frac{\left[-\frac{\partial}{\partial x} \Phi(x,t) + \Phi(x,t)\right]_{x=1}}{\prod_{j}(1-t^{m_j})^{a_j}} = 
\frac{\sum_{i=0}^k r_it^{m_i}\prod_{j \ne i}(1-t^{m_j}) + \prod_{j}(1-t^{m_j})}{\prod_{j}(1-t^{m_j})^{a_j-r_j+1}}.
$$
Using the fact that $m_i\mid m_{k}$ the numerator can be written as
\small
$$
\sum_{i=0}^k r_it^{m_i}\prod_{j \ne i}(1-t^{m_j}) + \prod_{j}(1-t^{m_j}) = 
\prod_{j=0}^{k-1}(1-t^{m_j}) \left(\sum_{i=0}^k r_it^{m_i}\left(1+t^{m_i}+t^{2m_i} + \cdots + t^{\left(\frac{m_k}{m_i}-1\right)m_i} \right) + (1-t^{m_k})\right).
$$
In conclusion, the reduced Hilbert series of the scroll is
$$
\frac{\sum_{i=0}^k r_it^{m_i}\left(1+t^{m_i}+t^{2m_i} + \cdots + t^{\left(\frac{m_k}{m_i}-1\right)m_i} \right) + 1- t^{m_k}}{\left(\prod_{j=0}^{k-1}(1-t^{m_j})^{a_j-r_j}\right)(1-t^{m_k})^{a_k-r_k+1}}. \qedhere
$$
\end{proof}

\begin{cor} \label[cor]{theorem_degree_scroll} 
Let $\bfP(1^{a_0}, m_1^{a_1}, \ldots, m_k^{a_k})$ be a divisible weighted projective space and $X$ a determinantal scroll with profile $(1^{r_0},m_1^{r_1},\dots,m_k^{r_k})$.   The degree of $X$ is 
$$
\frac{\sum_{i=0}^k \frac{r_i}{m_i}}{\prod_{j=0}^{k}{m_j}^{a_j-r_j}}.
$$
\end{cor}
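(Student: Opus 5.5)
The plan is to apply \Cref{thm_degree} directly to the reduced Hilbert series computed in \Cref{lemma_scroll_hs}. The denominator of \eqref{HilbertSeries} is
\[
\Big(\prod_{j=0}^{k-1}(1-t^{m_j})^{a_j-r_j}\Big)(1-t^{m_k})^{a_k-r_k+1}.
\]
This is a product of factors $(1-t^{w'})$ with each $w'$ among the weights. The number of factors is
\[
\sum_j (a_j-r_j)+1 = n+1-(q-1) = \dim X + 1,
\]
where $q=\sum_j r_j$ is the number of columns and $X$ has codimension $q-1$ by \Cref{thm_CM}. So the denominator has exactly $d+1$ factors, as \Cref{thm_degree} requires.

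I also need the weights appearing in the denominator to form a subsequence of $\bfw$. Weight $m_j$ occurs $a_j-r_j$ times for $j<k$ and $a_k-r_k+1$ times for $j=k$. This requires $a_k-r_k+1\le a_k$, which holds when $r_k\ge 1$. If $r_k=0$, I would replace $k$ by the largest index with $r_{k'}>0$, as in \cref{rmk_max_wt}. Alternatively, I note that the proof of \Cref{thm_degree} never uses the subsequence condition, only the shape of the denominator and the count $d+1$. I will also check $r_0\le a_0-1$ (\Cref{chop_first}) so that all exponents are nonnegative.

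Next I take $P_X(t)$ to be the numerator
\[
\sum_{i=0}^k r_i t^{m_i}\bigl(1+t^{m_i}+\cdots+t^{(m_k/m_i-1)m_i}\bigr)+1-t^{m_k}.
\]
This is a polynomial, so it has no pole at $t=1$. Evaluating at $t=1$, each inner geometric sum contributes $m_k/m_i$ and the term $1-t^{m_k}$ vanishes. This gives
\[
P_X(1)=\sum_i r_i\,\frac{m_k}{m_i}.
\]
The product of the denominator weights is
\[
\prod_{j=0}^{k-1}m_j^{a_j-r_j}\cdot m_k^{a_k-r_k+1}.
\]
Dividing, one factor of $m_k$ cancels, and \Cref{thm_degree} gives
\[
\deg X=\frac{\sum_i r_i/m_i}{\prod_{j=0}^k m_j^{a_j-r_j}},
\]
which is the claimed formula.

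The main subtlety is the bookkeeping that the denominator of \eqref{HilbertSeries} really has $d+1$ factors, so that it matches the hypothesis of \Cref{thm_degree}. This depends on the codimension count $q-1$ from \Cref{thm_CM} and on the Eagon--Northcott resolution being minimal. I also need to handle the edge case $r_k=0$ cleanly, either by the reduction to a smaller $k$ or by the remark about the proof of \Cref{thm_degree}. Beyond these points the argument is a direct evaluation.
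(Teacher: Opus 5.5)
Your proof is correct, and its core matches the paper's: apply \Cref{thm_degree} to the reduced Hilbert series of \Cref{lemma_scroll_hs}, get $P_X(1)=\sum_i r_i m_k/m_i$, and cancel one factor of $m_k$. The difference is how cones are treated.

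The paper first assumes $X$ is not a cone. This forces $r_k\geq 1$, since a column of degree $m_j<m_k$ cannot contain a variable of degree $m_k$, and that is exactly what makes the denominator weights a subsequence of $\bfw$. It then extends to cones via \Cref{cone_rem}: an $m_i$-cone raises $a_i$ by one with $r_i$ fixed, which multiplies the denominator of the formula by $m_i$ and matches $\deg(X')=\deg(X)/m_i$.

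You instead note that the Eagon--Northcott computation behind \Cref{lemma_scroll_hs} does not care whether $X$ is a cone. So you apply \Cref{thm_degree} to every scroll at once and only have to handle $r_k=0$. Redoing the reduction with $m_{k'}$, where $k'=\max\{j: r_j>0\}$, works: it only uses $m_i\mid m_{k'}$ for $i\leq k'$, and it yields exponents $a_{k'}-r_{k'}+1\leq a_{k'}$ and $a_j$ for $j>k'$. Your route gives a uniform argument that never needs \Cref{lemma_cone_degree}. The paper's route keeps the hypotheses of \Cref{thm_degree} literally satisfied, at the cost of invoking the cone formula.

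One refinement to your fallback remark. For any positive weights $w'_i$, the quantity $P_X(1)/\prod w'_i$ equals $d!$ times the average over a period of the leading coefficient of $HQ_X$. So the subsequence condition is indeed irrelevant. What makes this quantity equal $\deg X$ is that the leading coefficient is constant, which holds here because $X$ is non-degenerate and $w_0=1$ (\Cref{quasi_poly}(2)).

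Also, the Hilbert series only needs the Eagon--Northcott complex to be acyclic; minimality is not needed.
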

\begin{proof} 
First, assume that $X$ is not a cone. Since the codimension of $X$ is $\sum_{j=0}^k r_j - 1$, its dimension is $\sum_{j=0}^k (a_j - r_j)$. The result then follows from \Cref{thm_degree} and \Cref{lemma_scroll_hs}. 
Now notice that taking the cone with respect to a weight $m_i$ coordinate will increase $a_i$ while leaving $r_i$ unchanged. This multiplies the denominator in the above formula by $m_i$. By \Cref{cone_rem}, we see that this is precisely the degree of the $m_i$-cone.
\end{proof}

\begin{thm} \label[thm]{noncone_degree}  \label[thm]{cone_degree} Let $\bfP(\bfw) = \bfP(1^{a_0}, m_1^{a_1}, \ldots, m_k^{a_k})$ be a divisible weighted projective space and $X$ a $d$-dimensional determinantal scroll.
\begin{enumerate}
\item If $d \leq a_k$, then $X$ has minimal degree iff its profile is $(1^{a_0-1},m_1^{a_1},\dots,m_{k-1}^{a_{k-1}},m_k^{a_k+1-d})$. In particular, its degree is  
$$
\frac{1}{m_k^{d-1}}\left(a_0 -1+ \frac{a_1}{m_1} + \cdots + \frac{a_k}{m_k} + \frac{1-d}{m_k} \right).
$$
\item If $d > a_k$, then $X$ has minimal degree iff its profile is $(1^{a_0-1},m_1^{a_1},\dots,m_{i-1}^{a_{i-1}},m_{i}^{(a_{i}+\cdots+a_k)+1-d})$ where $i$ is the smallest index for which $d> \sum_{j=i+1}^k a_j$. In particular, its degree is 
\small \begin{align*} 
 \frac{1}{m_i^{d-\sum_{j=i+1}^ka_j - 1}m_{i+1}^{a_{i+1}}\cdots m_k^{a_k}}\left(a_0 -1+ \frac{a_1}{m_1} + \cdots + \frac{a_i}{m_i} + \frac{1+\sum_{j=i+1}^ka_j-d}{m_i} \right).
\end{align*}  
\end{enumerate} 
\end{thm}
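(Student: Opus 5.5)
The plan is to reduce everything to an optimization over admissible profiles, using the explicit degree formula of \Cref{theorem_degree_scroll}. A $d$-dimensional determinantal scroll $X$ has a profile $(1^{r_0},m_1^{r_1},\dots,m_k^{r_k})$ with $0\le r_j\le a_j$, $r_0\le a_0-1$ by \Cref{chop_first}, and, since the codimension is $\sum r_j-1$, we have $\sum_j (a_j-r_j)=d$. Write $s_j=a_j-r_j\ge 0$, so $s_0\ge 1$ and $\sum s_j=d$. By \Cref{theorem_degree_scroll},
\[
\deg(X)=\frac{\sum_{i} r_i/m_i}{\prod_j m_j^{s_j}} .
\]
The task is to show that this quantity, over all such $(s_j)$, is minimized exactly at the stated profile, and that the minimum equals the bound of \Cref{prop_degree_bound}. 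Since $\deg(X)$ is always at least that bound, attaining it is then equivalent to having the stated profile.

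The key step is an exchange argument. Suppose some $s_j>0$ with $j\ge 1$ (or $s_0>1$) while $s_{j'}<a_{j'}$ for some $j'>j$. Move one unit of $s$ from index $j$ to index $j'$; equivalently, increase $r_j$ by one and decrease $r_{j'}$ by one. Then the denominator is multiplied by $m_{j'}/m_j\ge 2$, an integer by divisibility. The numerator changes by $1/m_j-1/m_{j'}$, which is positive but at most $1/m_j$.

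I would check that the new value is strictly smaller. Write $N=\sum r_i/m_i$; we need $(N+1/m_j-1/m_{j'})\,m_j<N m_{j'}$. Since $r_0\ge0$, and since when $s_0=a_0-r_0$ is large the other $r$'s are near their maxima, we have $N\ge$ something like $1/m_j$. I expect this to need care: one must confirm $N\,(m_{j'}-m_j)>1-m_j/m_{j'}$, i.e.\ $N>1/m_{j'}$. This fails only if $N$ is tiny, which is the degenerate situation where almost no columns remain. In that case $\sum r_i\le 1$ and $X$ is a hypersurface or the whole space, and it can be handled directly.

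Iterating the exchange, the minimum is attained when the $s$-mass is pushed to the largest weights: $s_0=1$, and $s_j$ fills from index $k$ downward. If $d\le a_k$ (more precisely $d-1\le a_k$, absorbing the forced $s_0=1$), this gives $s_k=d-1$ and $r_k=a_k+1-d$. Otherwise it fills $a_k,a_{k-1},\dots$ until index $i$, with $r_i=(a_i+\cdots+a_k)+1-d$. I would match the indexing of part (2) against the condition $d>\sum_{j>i}a_j$ carefully; this off-by-one bookkeeping between $s_0=1$ and the top block is where I expect to slip.

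Substituting into the formula gives exactly the displayed degrees, which coincide with \eqref{degree_bound_divisible} and \eqref{degree_bound_divisible_2}. Such profiles are realized by integral scrolls, via the remark after \Cref{KW_form_simplified} together with \Cref{thm_minDegStructure}, so the bound is sharp. Strictness of each exchange gives the ``only if'' direction.

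The main obstacle is the strict-inequality verification in the exchange step, including the edge cases where $N$ is small or $r_{j'}$ is already $0$.
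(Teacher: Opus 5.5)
Your proposal is correct and is essentially the paper's proof: via \Cref{theorem_degree_scroll} the problem becomes minimizing $\bigl(\sum_j r_j/m_j\bigr)\prod_j m_j^{r_j}$ over admissible profiles with $\sum_j r_j$ fixed, and a one-unit exchange of columns toward smaller weights shows that the greedy profile ($r_0=a_0-1$, then filling $r_1,r_2,\dots$ in order) is optimal. The strictness you flag as the main obstacle is immediate: the exchange requires $r_{j'}\ge 1$, so $N\ge r_{j'}/m_{j'}\ge 1/m_{j'}$, with equality only for a single-column profile (where $X=\bfP(\bfw)$); this gives the strict decrease needed for the ``only if'' direction, which is slightly more than the paper records, since it states only a weak inequality for adjacent exchanges.
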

\begin{proof} The profile $(1^{r_0},m_1^{r_1},\dots,m_k^{r_k})$ of a $d$-dimensional determinantal scroll satisfies $d = \sum_{j=0}^k (a_j - r_j)$, $0 \leq r_0 \leq a_0-1$, and $0 \leq r_j\leq a_j$ for larger $j$. By \Cref{theorem_degree_scroll}, we can write the degree of the scroll as $C(\sum_{j=0}^k \frac{r_j}{m_j})\prod_{j=0}^k m_j^{r_j}$, for a constant $C$ only depending on the ambient projective space. Thus, we need to minimize the function
 $$
 f(r_0,\dots,r_k) = \left(\sum_{j=0}^k \frac{r_j}{m_j}\right)\prod_{j=0}^k m_j^{r_j}
$$
subject to the constraints $0 \leq r_0 \leq a_0 -1$ and $0 \leq r_j \leq a_j$. We also require the dimension $d$ (equivalently the sum $\sum_{j=0}^k r_j$) to be fixed. 

We can minimize $f$ greedily. In particular, we have $f(r_0,\dots,r_{i-1},r_i+1,r_{i+1}-1,r_{i+2},\dots,r_k) \leq f(r_0,\dots,r_k)$. Indeed, let $\omega = \sum_{j=0}^k \frac{r_j}{m_j}$ and note that 
$$
\frac{f(r_0,\dots,r_{i-1},r_i+1,r_{i+1}-1,r_{i+2},\dots,r_k)}{f(r_0,\dots,r_k)}
= \frac{\omega + \frac{1}{m_i} - \frac{1}{m_{i+1}}}{\omega} \cdot \frac{m_i}{m_{i+1}}
$$
The desired inequality now follows from 
$m_{i}(\omega + \frac{1}{m_i} - \frac{1}{m_{i+1}}) 
\leq m_i\omega + 1 - \frac{m_i}{m_{i+1}}
\leq m_{i+1}\omega$.
Thus, to minimize $f$, we first maximize $r_0$, then maximize $r_1$, then maximize $r_2$ until the constraint on $\sum r_j$ forces us to stop. In other words, let $i$ be the smallest index such that $d > \sum_{j=i+1}^ka_j$. Then we take $r_0 = a_0-1$, $r_j = a_j$ for all $1 \leq j \leq i-1$ and  $r_{i} = \sum_{j=i}^{k}a_j+1-d$. Finally, plugging this into the formula in \Cref{theorem_degree_scroll} gives the desired degree.
\end{proof}

\begin{remark}\label{rmk_minDegCone} 
\Cref{theorem_degree_scroll} also implies that a cone over a minimal degree scroll is itself of minimal degree only when the weight of the new variable is at least as large as all pre-existing weights.
More precisely, consider a minimal degree scroll $X$ in a divisible weighted projective space $\bfP(1^{a_0}, m_1^{a_1}, \ldots, m_k^{a_k})$. A cone over $X$ in $\bfP(1^{a_0}, m_1^{a_1}, \ldots, m_k^{a_k}, m_{k+1})$ is of minimal degree if $m_k \mid m_{k+1}$. In contrast, a cone over $X$ in $\bfP(1^{a_0}, m_1^{a_1}, \ldots, m_i^{a_i+1}, \ldots, m_k^{a_k})$ with $i < k$ will not be of minimal degree.\end{remark}

We have already encountered the first clear difference between weighted determinantal scrolls and classical rational normal scrolls in projective space: not all weighted determinantal scrolls have minimal degree. That said, every divisible weighted projective space does contain a minimal degree variety that is either a weighted determinantal scroll or a cone over one. However, as seen in \Cref{three_curves_1}, the weighted determinantal scrolls we have described do not account for all varieties of minimal degree. 
Similarly, as we have already mentioned, we expect that there are varieties of minimal degree which should be considered ``scrolls" from a geometric standpoint but which are not determinantal.

\subsection{Syzygies of determinantal scrolls}\label{section_regularity}

Our next goal is to explore the relationship between the degree of a weighted projective variety and its syzygies, using determinantal scrolls as a family of test cases. We focus on two main measures of the simplicity of syzygies: the ``linearity" of the resolution as captured by a weighted version of the $N_p$ conditions introduced by \cite{Brown_Erman_Positivity}, and the regularity of the defining ideal. 

Before defining the weighted versions of these measurements, we first recall the definitions in the standard case. Let $M$ be a module over $S$ with homogeneous maximal ideal $\mfm$, and let $F_\bullet\to M$ be its minimal free resolution. Then $M$ satisfies the \textbf{$N_0$ condition} if $H^1_\mfm(M) = 0$, and $M$ satisfies the \textbf{$N_p$ condition} if $F_i$ is generated in degrees $\leq i+1$ for all $1\leq i\leq p$. The $N_p$ conditions measure how many steps the maps in $F_\bullet$ are linear.
The \textbf{(Castelnuovo--Mumford) regularity} of $M$ is the least integer $r$ such that $H^i_\mfm(M)_j = 0$ for all $i\geq 0$ and $j>r-i$. Equivalently, it is equal to the greatest $d_i-i$, where $d_i$ is the largest degree of a generator of $F_i$. 

It is not clear what should replace the notion of “linearity” in a nonstandard graded ring. One approach is to compare the degrees of the maps in a free resolution with those in the Koszul complex on the variables. Adopting this viewpoint, if we wish to treat all variables as “linear,” then, in order to read off the linearity of the resolution from its Betti numbers alone, we must also regard as “linear” any polynomial whose degree coincides with that of a variable. This idea is formalized in the following definition, first introduced in \cite[Definition 1.1]{Brown_Erman_Positivity}.

\begin{definition}[{\textbf{Weighted $N_p$ conditions}}] \label[definition]{def_wNk}
For a weight sequence $\bfw = (w_0 \leq \ldots \leq w_n)$, write $w^i = \sum_{j=0}^{i-1} w_{n-j}$, i.e., $w^i$ is the sum of the $i$ largest weights. Let $X\subset \bfP(\bfw)$  and suppose $\bfF_\bullet$ is a minimal graded free $S(\bfw)$-resolution of the coordinate ring $S(\bfw)/I_X$. We say that $X$ satisfies the condition
\begin{itemize}
\item $wN_0$  if $H^1_\mfm(S(\bfw)/I_X) = 0$, 
\item $wN_p$ if $X$ satisfies $wN_0$ and $F_i$ is generated in degrees $\leq w^{i+1}$ for $1\leq i\leq p$.
\end{itemize}
\end{definition}

By virtue of being Cohen--Macaulay, all determinantal scrolls automatically satisfy $wN_0$. To determine if they satisfy the $wN_p$ condition in general, we compute their Betti numbers. 

\begin{notation}  For each $i>0$, let \textbf{$\tau_i(X)$} denote the largest $j$ for which $\beta_{i,j}(X)$ is nonzero.
\end{notation} 

\begin{lemma}\label[lemma]{lemma_max_betti}
Let $\bfP(\bfw)$ be a divisible weighted projective space and $X$ a determinantal scroll with profile $\bfu = (1^{r_0},m_1^{r_1},\dots,m_k^{r_k})$. Then for $0<i\leq \left(\sum_{j=0}^k r_j\right) -1$, 
 we have $\tau_i = u^{i+1}$
where $u^{i+1}$ denotes the sum of the greatest $i+1$ entries of $\bfu$.
\end{lemma}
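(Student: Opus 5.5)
The plan is to read off the graded Betti numbers directly from the Eagon--Northcott complex, which by \Cref{thm_CM} (and \cite[Thm A2.60]{eisenbud:syzygies}) is the minimal free resolution of $S(\bfw)/I_X$. As in the proof of \Cref{lemma_scroll_hs}, write $F = \bigoplus_{\ell=0}^k S(-m_\ell)^{\oplus r_\ell}$ and $G = S^2$. Since all entries of $G$ sit in degree $0$, the module $E_i = (\Sym_{i-1}G)^\star\otimes \bigwedge^{i+1}F$ is a direct sum of $i$ copies of $\bigwedge^{i+1}F$. So the degrees of the generators of $E_i$ are exactly the degrees of the generators of $\bigwedge^{i+1}F$, and the nonzero $\beta_{i,j}$ occur precisely at the degrees $j$ for which $\bigwedge^{i+1}F$ has a summand $S(-j)$.

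Next I would describe these summands explicitly. The module $\bigwedge^{i+1}F$ splits as
\[
\bigoplus_{s_0+\cdots+s_k=i+1}\ \bigwedge^{s_0}S^{r_0}(-1)\otimes\cdots\otimes\bigwedge^{s_k}S^{r_k}(-m_k),
\]
with $0\le s_\ell\le r_\ell$. Each such summand is free and generated in degree $\sum_\ell s_\ell m_\ell$. Equivalently, the generator degrees are the sums of the entries of $\bfu$ indexed by $(i+1)$-element subsets of the $\sum r_j$ column positions. This is the same as reading off the coefficient of $x^{i+1}$ in $\Phi(x,t)=\prod_\ell(1-t^{m_\ell}x)^{r_\ell}$, as in \Cref{lemma_scroll_hs}.

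The maximum of such a sum over $(i+1)$-subsets is attained by choosing the $i+1$ largest entries of $\bfu$. In other words, one greedily takes $s_k=\min(r_k,i+1)$, then fills $s_{k-1}$ from what remains, and so on. The resulting degree is $u^{i+1}$. The hypothesis $i+1\le\sum_j r_j$ guarantees that such a subset exists, so $E_i\neq 0$. Hence $\tau_i=u^{i+1}$.

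The only point that needs care, and I expect it to be the main obstacle, is making sure that no cancellation occurs. One cannot simply take the coefficient of $t^jx^{i+1}$ in the signed expansion of $\Phi$, because signs could cancel there. Working with the modules $E_i$ themselves avoids this: the coefficient of $t^{u^{i+1}}$ in $\prod_\ell(1+t^{m_\ell}x)^{r_\ell}$ counts summands of $\bigwedge^{i+1}F$ and is a positive integer. Minimality of the Eagon--Northcott resolution, which holds since all entries of $M$ lie in $\mfm$, then makes $\beta_{i,u^{i+1}}$ genuinely nonzero.
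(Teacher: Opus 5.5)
Your proposal is correct and follows essentially the same route as the paper: read off the generator degrees of the Eagon--Northcott terms $E_i\cong(\bigwedge^{i+1}F)^{\oplus i}$, then maximize $\sum_\ell s_\ell m_\ell$ greedily subject to $s_\ell\le r_\ell$ and $\sum_\ell s_\ell=i+1$. The cancellation you flag as the main obstacle cannot actually occur: every monomial contributing to the $x^{i+1}$-coefficient of $\Phi(x,t)=\prod_\ell(1-t^{m_\ell}x)^{r_\ell}$ carries the same sign $(-1)^{i+1}$, which is why the paper can read $\tau_i$ directly off $\Phi$.
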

\begin{proof} As in \Cref{lemma_scroll_hs}, the Betti numbers $\beta_{i,j}$ of $X$ are given by the coefficients of $t^jx^{i+1}$ in $\prod_{\ell = 0}^k(1-t^{m_\ell}x)^{r_\ell}$. From this we see that $\tau_i$ is equal to the maximum over tuples $(s_0, \ldots, s_k)$ of the quantity $i+1+\sum_{\ell=0}^k s_\ell(m_\ell-1)$ subject to $s_\ell\leq r_\ell$ for each $0\leq \ell\leq k$ and $\sum_{\ell=0}^ks_\ell= i+1$. This can be maximized greedily by taking $s_k$ as large as possible, then $s_{k-1}$, and so on in descending order until $\sum s_\ell = i+1$. This yields a maximum of $i+1 + \sum_{\ell = 0}^k s_\ell(m_\ell -1)  = \sum_{\ell = 0}^k s_\ell m_\ell= u^{i+1}$, the sum of the largest $i+1$ values in the profile.
\end{proof}

\begin{cor}\label{cor_scrolls_wNk}
Determinantal scrolls in divisible weighted projective spaces satisfy the $wN_p$ conditions for all $p$.
\end{cor}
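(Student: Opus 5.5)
The plan is to verify the two conditions in \Cref{def_wNk} separately, using \Cref{lemma_max_betti} for the degree bounds. First, $wN_0$: by \Cref{thm_CM} the coordinate ring $S(\bfw)/I_X$ of a determinantal scroll is Cohen--Macaulay. Its dimension is $\dim(X)+1 \geq 2$ once $X$ is positive dimensional, so $H^1_\mfm(S(\bfw)/I_X)=0$. If $X$ is a cone, adjoining variables preserves the Cohen--Macaulay property, so the same argument applies.

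Second, the degree bound. The minimal free resolution is the Eagon--Northcott complex, as in the proof of \Cref{lemma_scroll_hs}. Its $i$-th module $F_i$ is nonzero only for $1\leq i\leq \left(\sum_j r_j\right)-1$, where $\sum_j r_j$ is the number of columns of the $1$-generic matrix. For $i$ in this range, \Cref{lemma_max_betti} gives that the largest generator degree of $F_i$ is $\tau_i = u^{i+1}$, the sum of the $i+1$ largest entries of the profile $\bfu$. For $i$ beyond this range, $F_i=0$ and the condition is vacuous.

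It therefore suffices to show $u^{i+1}\leq w^{i+1}$ for every $i$ in range. This is where $1$-genericity enters (\Cref{def_1generic}): the profile $\bfu$ is obtained from $\bfw=(w_0,\dots,w_n)$ by deleting entries, so $\bfu$ is a sub-multiset of $\bfw$. Sort both sequences in decreasing order. The $j$-th largest entry of a sub-multiset is at most the $j$-th largest entry of the whole multiset, since deleting elements can only lower order statistics. Summing over $j\leq i+1$ gives $u^{i+1}\leq w^{i+1}$. This yields $wN_p$ for all $p$.

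I do not expect a serious obstacle. The only points needing care are the bookkeeping of indices and the edge cases: when $X$ is a cone, the resolution over the larger ring is the base change of the original, so its Betti numbers are unchanged while $\bfw$ only grows. Divisibility is used only through \Cref{lemma_max_betti}, which itself just reads off the Eagon--Northcott degrees.
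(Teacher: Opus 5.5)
Your proposal is correct and matches the paper's argument: $wN_0$ comes from Cohen--Macaulayness (\Cref{thm_CM}), and the degree bound comes from $\tau_i = u^{i+1} \leq w^{i+1}$ via \Cref{lemma_max_betti}, using that a $1$-generic profile is a sub-multiset of $\bfw$; you simply spell out the order-statistics step that the paper leaves implicit. The positive-dimensionality you assume in the $wN_0$ step always holds, since by \Cref{chop_first} a $1$-generic matrix has at most $n$ columns, so $\codim X \leq n-1$.
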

\begin{proof} 
Since a weighted determinantal scroll with profile $\bfu$ is given by a $1$-generic matrix, we have 
$u^{i+1} \leq w^{i+1}$ and the result follows from \Cref{lemma_max_betti}.
\end{proof}

This parallels the classical setting, where rational normal scrolls in standard projective space satisfy the $N_p$ conditions. However, a significant departure in the weighted setting is that the $wN_p$ conditions allow for a much broader range of Betti tables than their standard graded counterparts. Specifically, the $wN_p$ conditions impose less rigid restrictions on the overall shape of the resolution. 

\begin{example}\label{ex_scrolls_wNk}
Consider $\bfP(1^2, 3^2, 6^3)$ and let $X_1$ be a determinantal scroll with profile $(1,3,6^3)$, $X_2$ a determinantal scroll with profile $(3^2,6^3)$, and $X_3$ a determinantal scroll with profile $(1,3^2,6^2)$. In \Cref{fig_bettiTables}, we display the Betti tables for each, highlighting the last entry of each column in the Betti table that may be nonzero in order for the relevant $wN_p$ condition to be satisfied. We can see that $X_2$ is in a sense ``maximally $wN_p$" for all $1\leq p\leq 4$ in that each column of the Betti table attains the maximum allowable height, while $X_1$ attains this maximum in columns $1$, $2$, and $3$, and $X_3$ attains it only in column $1$. 
\end{example}
\begin{figure}[H]
\scriptsize 
\renewcommand{\arraystretch}{0.84} 
\setlength{\arraycolsep}{4pt}

\begin{minipage}{.3\textwidth}
\[
\begin{matrix}
       & 0 & 1 & 2 & 3 & 4\\
      0: & 1 & . & . & . & .\\
      1: & . & . & . & . & .\\
      2: & . & . & . & . & .\\
      3: & . & 1 & . & . & .\\
      4: & . & . & . & . & .\\
      5: & . & . & . & . & .\\
      6: & . & 3 & . & . & .\\
      7: & . & . & . & . & .\\
      8: & . & 3 & 6 & . & .\\
      9: & . & . & . & . & .\\
      10: & . & . & . & . & .\\
      11: & . & \textcolor{red}{3} & 6 & . & .\\
      12: & . & . & . & . & .\\
      13: & . & . & 6 & 9 & .\\
      14: & . & . & . & . & .\\
      15: & . & . & . & . & .\\
      16: & . & . & \textcolor{red}{2} & 3 & .\\
      17: & . & . & . & . & .\\
      18: & . & . & . & \textcolor{red}{3} & 4\\
      19: & . & . & . & . & .\\
      20: & . & . & . & . & \textcolor{red}{-}
      \end{matrix}
      \]
\end{minipage}
\hfill  
\begin{minipage}{.3\textwidth}
\[
\begin{matrix}
       & 0 & 1 & 2 & 3 & 4\\
      0: & 1 & . & . & . & .\\
      1: & . & . & . & . & .\\
      2: & . & . & . & . & .\\
      3: & . & . & . & . & .\\
      4: & . & . & . & . & .\\
      5: & . & 1 & . & . & .\\
      6: & . & . & . & . & .\\
      7: & . & . & . & . & .\\
      8: & . & 6 & . & . & .\\
      9: & . & . & . & . & .\\
      10: & . & . & 6 & . & .\\
      11: & . & \textcolor{red}{3} & . & . & .\\
      12: & . & . & . & . & .\\
      13: & . & . & 12 & . & .\\
      14: & . & . & . & . & .\\
      15: & . & . & . & 9 & .\\
      16: & . & . & \textcolor{red}{2} & . & .\\
      17: & . & . & . & . & .\\
      18: & . & . & . & \textcolor{red}{6} & .\\
      19: & . & . & . & . & .\\
      20: & . & . & . & . & \textcolor{red}{4}
      \end{matrix}
\]
\end{minipage}
\hfill
\begin{minipage}{.3\textwidth}
\[
\begin{matrix}
       & 0 & 1 & 2 & 3 & 4\\
      0: & 1 & . & . & . & .\\
      1: & . & . & . & . & .\\
      2: & . & . & . & . & .\\
      3: & . & 2 & . & . & .\\
      4: & . & . & . & . & .\\
      5: & . & 1 & 2 & . & .\\
      6: & . & 2 & . & . & .\\
      7: & . & . & . & . & .\\
      8: & . & 4 & 8 & . & .\\
      9: & . & . & . & . & .\\
      10: & . & . & 4 & 6 & .\\
      11: & . & \textcolor{red}{1} & 2 & . & .\\
      12: & . & . & . & . & .\\
      13: & . & . & 4 & 6 & .\\
      14: & . & . & . & . & .\\
      15: & . & . & . & 3 & 4\\
      16: & . & . & \textcolor{red}{-} & . & .\\
      17: & . & . & . & . & .\\
      18: & . & . & . & \textcolor{red}{-} & .\\
      19: & . & . & . & . & .\\
      20: & . & . & . & . & \textcolor{red}{-}
      \end{matrix}
      \]
\end{minipage}
\caption{Betti tables for three scrolls in $\bfP(1^2, 3^2, 6^3)$ with profiles $(1,3,6^3)$ (left), $(3^2, 6^3)$ (center), and $(1,3^2,6^2)$ (right). The red entry in the $i^{\text{th}}$ column shows the last position in the column that may contain a nonzero entry if the $wN_i$ condition is to be satisfied.}
\label[figure]{fig_bettiTables}
\end{figure}

A somewhat orthogonal notion to the linearity of the resolution as measured by the $N_p$ properties is the Castelnuovo-Mumford regularity of the variety. In weighted projective space, there are a couple of different notions of regularity, which coincide when all the weights are one. We can investigate them here using our knowledge of the Betti table of the scroll.

\begin{definition}[{\cite[Section 5]{Ben04}; \cite[Definition 1.5]{Brown_Erman_Positivity}}]
Let $M$ be a module over $S(\bfw)$. The \textbf{(weighted) Castelnuovo-Mumford regularity} of $M$ is the minimal $r$ such that $H^i_\mfm(M)_j = 0$ for all $i\geq 0$ and $j>r-i$. 
The $\textbf{Koszul regularity}$ of $M$ is the minimal $r$ such that $H^i_\mfm(M)_j = 0$ for all $i\geq 0$ and $j>r-w^{i-1}-1$, where $w^i$ is as in \Cref{def_wNk}. 
For a subscheme $X \subseteq \bfP(\bfw)$, we use  $kReg(X)$ and  $wReg(X)$ to denote the Koszul and weighted regularity of $S/I_X$, respectively.
\end{definition}
We immediately see that, for a Cohen--Macaulay subscheme of dimension $d$, the Koszul regularity and weighted regularity differ by the constant $w^d-d$, as the only nonzero local cohomology module is $H^{d+1}_{\mfm}(-)$. For scrolls, we compute the regularity directly from the Betti table.

\begin{cor}\label[corollary]{cor_regularity}
Let $\bfP(\bfw)$ be a divisible weighted projective space and $X$ 
a $d$-dimensional determinantal scroll with profile $(1^{r_0}, m_1^{r_1}, \ldots, m_k^{r_k})$. Then
\[
kReg(X) =  w^d +1 - \sum_{i=0}^k (a_i-r_i)m_i  
\quad \text{and}\quad
wReg(X) =  d + 1 - \sum_{i=0}^k (a_i-r_i)m_i.
\]
\end{cor}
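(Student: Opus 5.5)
The plan is to reduce both regularities to the $a$-invariant of $R := S(\bfw)/I_X$, and to read that invariant off the last term of the Eagon--Northcott resolution.

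By \Cref{thm_CM}, $R$ is Cohen--Macaulay of Krull dimension $d+1$. Hence $H^i_\mfm(R)=0$ for all $i\neq d+1$, and both regularities depend only on the top nonvanishing degree
\[
a(R):=\max\{j : H^{d+1}_\mfm(R)_j\neq 0\}.
\]
The weighted regularity is the least $r$ with $H^{d+1}_\mfm(R)_j=0$ for all $j>r-(d+1)$, so $wReg(X)=a(R)+d+1$. The Koszul regularity requires vanishing for $j>r-w^{d}-1$, so $kReg(X)=a(R)+w^d+1$. This also recovers the observation that the two regularities differ by $w^d-d$. It therefore suffices to show
\[
a(R) = -\sum_{i=0}^k (a_i-r_i)m_i .
\]

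To compute $a(R)$ I will use graded local duality over $S(\bfw)$, whose canonical module is $S(\bfw)(-\sum_{j} w_j)$. This gives
\[
H^{d+1}_\mfm(R)^\vee \cong \operatorname{Ext}^{c}_{S(\bfw)}\bigl(R, S(\bfw)\bigr)\Bigl(-\sum_j w_j\Bigr), \qquad c=\operatorname{codim} X=\sum_j r_j-1 .
\]
Because $R$ is Cohen--Macaulay of codimension $c$, the module $\operatorname{Ext}^c(R,S(\bfw))$ is the cokernel of $F_{c-1}^\ast\to F_c^\ast$, where $F_\bullet$ is the Eagon--Northcott resolution from the proof of \Cref{lemma_scroll_hs}. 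Its lowest nonzero degree is $-\tau_c(X)$. The reason is that minimality puts the image of $F_{c-1}^\ast$ inside $\mfm F_c^\ast$, so the generators of $F_c^\ast$ in the most negative degree $-\tau_c$ survive to the cokernel. Dualizing, the top degree of $H^{d+1}_\mfm(R)$ is $\tau_c-\sum_j w_j$. The main care needed in this step is the grading convention for local duality in the weighted setting: the twist must be by $-\sum_j w_j$, not by $-(n+1)$.

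Finally, \Cref{lemma_max_betti} with $i=c$ gives $\tau_c=u^{c+1}$, the sum of the largest $c+1=\sum_j r_j$ entries of the profile. That is all of the entries, so $\tau_c=\sum_i r_im_i$. Since $\sum_j w_j=\sum_i a_im_i$, we get $a(R)=-\sum_i(a_i-r_i)m_i$. Substituting into the two expressions for $wReg(X)$ and $kReg(X)$ above yields the stated formulas.

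The step that needs the most attention is the local duality identification. One can sanity-check it independently using the reduced Hilbert series \eqref{HilbertSeries}. The $a$-invariant of a Cohen--Macaulay graded ring equals the degree of its Hilbert series as a rational function. The numerator of \eqref{HilbertSeries} has degree $\sum_i r_im_i-m_k+m_k$, once one checks that the top-degree terms do not cancel. The denominator has degree $\sum_{j<k}(a_j-r_j)m_j+(a_k-r_k+1)m_k$. Their difference is $-\sum_i(a_i-r_i)m_i$, which agrees with the value of $a(R)$ obtained above.
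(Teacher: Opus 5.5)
Your proof is correct, but it takes a different route from the paper's.

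\textbf{The paper's route.} It quotes the Betti-table formula of Symonds: the weighted regularity equals the height of the Betti table minus $\sum_i a_i(m_i-1)$. It then checks that the height $\max_p\{\tau_p-p\}$ is attained in the last column, where it equals $1+\sum_i r_im_i-\sum_i r_i$. Finally it gets $kReg$ from the shift $w^d-d$, which it asserts just before the corollary.

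\textbf{Your route.} You use Cohen--Macaulayness and graded local duality with $\omega_{S(\bfw)}=S(\bfw)(-\sum_j w_j)$. This gives $a(R)=\tau_c-\sum_j w_j$ directly from the last Eagon--Northcott module. In effect you prove, for the Cohen--Macaulay ring at hand, what the paper imports from Symonds. This buys three things:
\begin{itemize}
\item The argument is self-contained.
\item It needs only $\tau_c=\sum_i r_im_i$, not a comparison of $\tau_p-p$ across all columns.
\item It derives both regularities, and the shift $w^d-d$ between them, from the single invariant $a(R)$.
\end{itemize}
The paper's version is shorter given the citation, and it is phrased through the Betti table, which fits the surrounding discussion of $wN_p$ conditions.

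\textbf{A slip in your optional cross-check.} The numerator in \eqref{HilbertSeries} has degree $m_k$, not $\sum_i r_im_i$. Each summand $r_it^{m_i}(1+t^{m_i}+\cdots+t^{m_k-m_i})$ tops out at $t^{m_k}$, and the coefficient of $t^{m_k}$ is $\sum_i r_i-1=c\neq 0$. The value $\sum_i r_im_i$ is the degree of the unreduced numerator over $\prod_j(1-t^{m_j})^{a_j}$. Paired with the reduced denominator as you wrote, your difference comes out off by $\sum_i r_im_i-m_k$. Either consistent pairing does give $-\sum_i(a_i-r_i)m_i$: $m_k$ against the reduced denominator, or $\sum_i r_im_i$ against $\sum_i a_im_i$. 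This affects only the sanity check, not your main argument.
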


\begin{proof}
By \cite{Symonds2011}, the weighted regularity may be computed as 
$ \text{(height of the Betti table) } - \sum_{i=0}^k a_i(m_i-1).$ 
Since the height of the Betti table is realized by the last column and the projective dimension of $X$ is equal to its codimension $\left(\sum_{i=0}^k r_i\right) -1$, the height of the Betti table is 
$
\max_p\{\tau_p-p\} = 1 + \sum_{i=0}^k r_im_i - \sum_{i=0}^k r_i.
$
This gives the desired weighted regularity. As noted above, the Koszul regularity of a $d$-dimensional Cohen--Macaulay scheme in $\bfP(\bfw)$ differs from the weighted regularity by $w^d-d$.
\end{proof}

Applying these formulas gives us two corollaries, one of which suggests a close relationship between the regularity and the degree, while the other highlights the subtle differences between them. 

\begin{cor}\label[cor]{minDeg_minReg}
Let $\bfP(\bfw)$ be a divisible weighted projective space and $X$ a $d$-dimensional determinantal scroll of minimal degree. Then $X$ has minimal regularity among all $d$-dimensional weighted determinantal scrolls in $\bfP(\bfw)$.
\end{cor}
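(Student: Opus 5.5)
The plan is to combine the explicit formulas of \Cref{cor_regularity} with the classification of minimal degree profiles in \Cref{cone_degree}, turning the statement into a discrete optimization over admissible profiles. Fix $\bfP(\bfw) = \bfP(1^{a_0},m_1^{a_1},\dots,m_k^{a_k})$ and $d$. By \Cref{cor_regularity}, for a $d$-dimensional scroll with profile $(1^{r_0},m_1^{r_1},\dots,m_k^{r_k})$ we have
\[
wReg(X) = d+1-\sum_{i=0}^k (a_i-r_i)m_i, \qquad kReg(X) = w^d + 1 - \sum_{i=0}^k (a_i-r_i)m_i .
\]
Both $d$ and $w^d$ depend only on the ambient space and on $d$. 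Hence minimizing either regularity is the same as \emph{maximizing} $g(r_0,\dots,r_k) := \sum_{i=0}^k (a_i-r_i)m_i$. The constraints are $0\le r_0\le a_0-1$, $0\le r_j\le a_j$ for $j\geq 1$, and $\sum_i (a_i-r_i) = d$. These are exactly the constraints used in the proof of \Cref{cone_degree}.

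Set $e_i := a_i - r_i \geq 0$, with $e_0\ge 1$. Then $\sum e_i = d$, and we are maximizing $\sum e_i m_i$. Since $m_0<m_1<\cdots<m_k$, a greedy exchange argument works. If $e_i>0$ for some $i$ with $i<j$ and $e_j<a_j$ (or, for the entry $e_0$, $e_0>1$), then moving one unit from $e_i$ to $e_j$ strictly increases $g$. So at a maximizer the large weights are saturated first. That is, $e_k=a_k$, then $e_{k-1}=a_{k-1}$, and so on, until the budget $d$ runs out. The boundary case is $e_0\geq 1$, which forces $r_0\le a_0-1$.

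Translating back, the maximizer has $r_0=a_0-1$ and $r_j=a_j$ for $j<i$. At the critical index $i$ it has $r_i = \sum_{j\ge i} a_j + 1 - d$, and $r_j = 0$ for $j>i$. Here $i$ is the smallest index with $d>\sum_{j>i}a_j$. When $d\le a_k$ this gives the profile $(1^{a_0-1},m_1^{a_1},\dots,m_k^{a_k+1-d})$. This is precisely the profile that \Cref{cone_degree} identifies as the unique minimal degree profile, so a minimal degree scroll realizes the maximum of $g$ and hence has minimal weighted and Koszul regularity.

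The main thing to be careful about is the bookkeeping of the extremal profile. I need to check that the greedy maximizer of $g$ coincides exactly with the greedy minimizer of $f$ in the proof of \Cref{cone_degree}, including the $r_0\le a_0-1$ constraint and the index $i$ in case (2) of that theorem.

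I also need to account for cones. A scroll that is a cone has some $r_j$ with the corresponding variables not appearing, which is still covered by allowing $r_j<a_j$. The formulas of \Cref{cor_regularity} apply to cones as well, since the Betti table is unchanged and the local cohomology shifts consistently. If that is in doubt, I would re-derive it from the Eagon--Northcott resolution exactly as in that corollary.

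If existence of a minimizing scroll is needed for the comparison, it is supplied by the minimal degree scroll $X$ itself. The argument therefore only requires the inequality $g(\text{profile of }X)\ge g(\text{any admissible profile})$.
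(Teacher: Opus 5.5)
Your proposal is correct and is essentially the paper's argument. The paper also uses Corollary~\ref{cor_regularity} to reduce both regularities to minimizing $\sum_i r_i m_i$ (equivalently, maximizing your $\sum_i (a_i-r_i)m_i$) subject to $\sum_i r_i=\sum_i a_i-d$. It then applies the same one-step exchange $(r_j,r_{j+1})\mapsto(r_j+1,r_{j+1}-1)$ to land on the greedy profile from the proof of Theorem~\ref{cone_degree}.

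The bookkeeping you worry about is automatic: with $\sum_i r_i$ fixed, filling $r$ from the smallest weights and filling $e=a-r$ from the largest give the same vector. So you never need the closed form for $r_i$, which, as you wrote it, is off by one when the critical index is $i=0$; there the greedy profile is simply $r_0=\sum_j a_j-d$ and $r_j=0$ for $j\ge 1$.
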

\begin{proof}
Note first that to minimize the regularity we need to minimize the sum $\sum_{i=0}^kr_im_i$ subject to the constraint $\sum_{i=0}^k r_i = \sum_{i=0}^ka_i - d$. We begin with the former. Let $r = (r_0, \ldots, r_k)$ and consider $r' = (r_0, \ldots, r_{j-1}, r_j+1, r_{j+1}-1, r_{j+2}, \ldots, r_k)$. Then 
\[
\sum_{i=0}^k r'_im_i = \left(\sum_{i=0}^k r_im_i\right) +m_j -m_{j+1}  < \sum_{i=0}^k r_im_i
\]
so we can minimize greedily just as in \cref{noncone_degree}, obtaining the minimal degree profile. \end{proof}

Apart from when all three are minimized, the relationship between the degree and regularity becomes murkier. In particular, we have the following. 

\begin{cor}\label[cor]{cor_kReg_implication}
Let $X$ and $Y$ be two $d$-dimensional weighted determinantal scrolls in the same divisible weighted projective space. Then there are no implications between the equalities \begin{enumerate}
\item $reg(X) = reg(Y)$
\item $\deg(X) = \deg(Y)$
\end{enumerate}
\end{cor}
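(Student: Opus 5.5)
The plan is to exhibit explicit counterexamples using the closed formulas already available. By \Cref{theorem_degree_scroll}, a $d$-dimensional scroll with profile $(1^{r_0},m_1^{r_1},\dots,m_k^{r_k})$ in a fixed space has degree $C\cdot f(r)$, where $C$ depends only on $\bfP(\bfw)$ and
\[
f(r_0,\dots,r_k)=\Bigl(\sum_j r_j/m_j\Bigr)\prod_j m_j^{r_j}.
\]
By \Cref{cor_regularity}, for fixed $d$ both $wReg$ and $kReg$ are an affine function of
\[
g(r)=\sum_j r_jm_j .
\]
They differ only by the constant $w^d-d$, so the statement holds simultaneously for either notion of regularity. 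The constraint is that $\sum_j r_j$ is fixed, equal to $\sum_j a_j-d$. The claim therefore reduces to two things: (a) find two admissible profiles with equal $g$ but different $f$; (b) find two admissible profiles with equal $f$ but different $g$. Then we must realize each profile by an honest determinantal scroll.

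For (a), two distinct weights never suffice: with $r_0+r_1$ fixed, $g$ determines the profile. So I would take three weights, e.g.\ $\bfP(1^{a_0},2^{a_1},4^{a_2})$. Here the move $(r_0,r_1,r_2)\mapsto(r_0+1,r_1-2,r_2+1)$ preserves both $\sum r_j$ and $g$. For instance, the profiles $(2^2)$ and $(1,4)$ both have $g=4$, but $f=4$ versus $f=5$. Taking $a=(2,2,1)$, say, both profiles are admissible with the same $d$, and they give scrolls of equal regularity and different degree.

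For (b), I would do a small finite search over divisible weight sequences and profiles with fixed $\sum r_j$, looking for a coincidence $f(r)=f(r')$ with $g(r)\neq g(r')$. Small cases such as $(1,2,4)$ and $(1,2,6)$ with $\sum r_j\le 3$ produce no ties. I would therefore first try four distinct weights, e.g.\ $(1,2,4,8)$ or $(1,2,6,12)$, and larger $\sum r_j$, exploiting that $f$ mixes an additive and a multiplicative factor so coincidences become likely. I expect this numerical search to be the main obstacle. I would also consider the degenerate but allowed case where one scroll is a cone, using \Cref{rmk_minDegCone}.

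Finally, for each chosen profile I would write an explicit 1-generic matrix, built from blocks as in \Cref{matrix_decomposition_notation}, with Jordan and nilpotent blocks in the higher degrees. I would verify pseudo 1-genericity by the generalized-row argument as in \Cref{profile_not_unique}. For integrality I would localize at $x_{0,1}$ and check that the resulting ideal is generated by equations solving for variables, as in \Cref{profile_not_unique}, or confirm with \cite{M2}.
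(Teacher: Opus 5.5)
Your reduction is sound and is what the paper's proof rests on. For fixed $\sum_j r_j$, \Cref{theorem_degree_scroll} makes the degree proportional to $f(r)$, and \Cref{cor_regularity} makes both regularities equal to $g(r)=\sum_j r_jm_j$ plus a constant. So everything comes down to exhibiting two pairs of realizable profiles. But a ``no implication'' statement is proved only by its counterexamples, and neither of yours is in place.

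In (a), your move $(r_0,r_1,r_2)\mapsto(r_0+1,r_1-2,r_2+1)$ changes $g$ by $1-4+4=1$, so it does not preserve regularity. Indeed $g(2^2)=4$ but $g(1,4)=5$. In $\bfP(1^2,2^2,4)$ these two threefolds have $wReg=-2$ and $-1$ (and degrees $1/4$ and $5/16$), so the pair proves nothing. For weights $1,2,4$, the moves fixing both $\sum_j r_j$ and $g$ are the multiples of $(2,-3,1)$. For example, $(2^3)$ and $(1^2,4)$ in $\bfP(1^3,2^3,4)$ both have $g=6$ but $f=12$ versus $f=9$. The paper's \Cref{ex_same_deg_dif_wReg} uses this same move: $(1,2^4,4)$ and $(1^3,2,4^2)$ in $\bfP(1^4,2^4,4^4)$ both have $g=13$, hence $kReg=6$ and $wReg=-8$, but their degrees are $13/4^4$ and $1/2^5$.

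In (b) you give no example at all, only a search plan, so half of the statement is unproved. Four weights are unnecessary. Take $\bfP(1^2,3^2,6^3)$ with $\sum_j r_j=4$, so $d=3$. The profiles $(1,6^3)$ and $(3^2,6^2)$ have $f=(1+\tfrac{3}{6})\cdot 6^3=324$ and $f=(\tfrac{2}{3}+\tfrac{2}{6})\cdot 3^2\cdot 6^2=324$, so both scrolls have degree $1/6$. However $g=19$ versus $g=18$, giving $kReg=12$ versus $11$ (and $wReg=-3$ versus $-4$). This is the pair the paper reads off from \Cref{fig_table}.

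With these two pairs substituted, your plan of writing explicit 1-generic matrices and checking primality completes the argument. The primality check can be done by localizing at a variable that is a nonzerodivisor on the Cohen--Macaulay quotient. That realizability check is a small point the paper leaves implicit, relying on the tabulated scrolls.
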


\begin{proof}
The examples with profiles $(1,6^3)$ and $(3^2, 6^2)$ in \Cref{fig_table} demonstrate that $(2)$ does not imply $(1)$. Meanwhile \Cref{ex_same_deg_dif_wReg} demonstrates that $(1)$ also fails to imply $(2)$.
\end{proof}

\begin{example}\label[example]{ex_same_deg_dif_wReg}
Let $X$ and $Y$ be weighted determinantal scrolls in $\bfP(1^4, 2^4, 4^4),$ with profiles  $(1, 2^4, 4)$ and $(1^3, 2, 4^2)$ respectively. Then $kReg(X) =  kReg(Y)= 6$ and $wReg(X) = wReg(Y) = -8$. On the other hand, $\deg(X) = \frac{13}{4^4}$ while $\deg(Y) = \frac{1}{2^5}$.
\end{example}

We emphasize that \cref{minDeg_minReg} does not hold for minimal degree varieties that are not Cohen-Maculay. In these cases, the behavior of weighted and Koszul regularity diverges. We see in examples that at least the weighted regularity seems to become less coupled with the degree even for minimal degree varieties, as demonstrated in the following example.

\begin{example} \label{three_curves_2}
Returning once again to \Cref{three_curves_1}, we see that  $C_1$ and $C_2$ are Cohen--Macaulay with degree 2, Koszul regularity 2 and weighted regularity 1. On the other hand, a direct local cohomology computation shows that $C_0$, which is not Cohen--Macaulay, has Koszul regularity 2 and weighted regularity 2. In particular, we have a curve of minimal degree that does \emph{not} exhibit minimal weighted regularity.
\end{example}

Notice that if we take an $m_i$-cone over $X$, this corresponds to incrementing $a_i$ by 1 while the Betti table remains unchanged, so the weighted regularity decreases by $m_i-1$. On the other hand, if $m_i\geq w_{r}$, then the Koszul regularity of the cone will remain unchanged, while if $m_i< w_r$ the Koszul regularity will increase by $w_r-m_i$. In particular, the weighted regularity is unchanged by the addition of degree 1 variables, while the Koszul regularity is unchanged by the addition of variables of degree at least the maximum degree of a current variable. Comparing with \cref{rmk_minDegCone}, we draw the reader's attention to the similarity of the behavior under the $m$-cone operation of the Koszul regularity and the degree.

\begin{example}\label{ex_cones_reg}
Let $Y_1,Y_2 \subseteq \bfP(1^2, 3^2, 6^3)$ be the determinantal scrolls defined by
\[
I_2\begin{pmatrix}
x_1 & y_1 & y_2 & z_1 & z_2 \\
x_2 & y_2 & x_2^3 & z_2 & y_2^2
\end{pmatrix}
\quad\text{and}\quad
I_2\begin{pmatrix}
x_1 & y_1 & z_1 & z_2 & z_3\\
x_2 & x_1^3 & z_2 & z_3 &y_1^2
\end{pmatrix}.
\]
Both are cones over curves that are minimal degree in their weighted linear span: that is $Y_1$ is a $6$-cone over a minimal degree curve $C_1\subset \bfP(1^2, 3^2, 6^2)$ and $Y_2$ is a $3$-cone over a minimal degree curve $C_2\subset \bfP(1^2, 3, 6^3)$. Both $C_1$ and $C_2$ have weighted regularity 1 and Koszul regularity 6. The cone $Y_1$ also has Koszul regularity 6 and has weighted regularity -4, while  $Y_2$ has Koszul regularity 9 and weighted regularity -1.

On the other hand, if we take a $3$-cone over $C_1$ we get a surface in $\bfP(1^2, 3^3, 6^2)$ with Koszul regularity $9$ and weighted regularity -1, while taking a $1$-cone over $C_1$ gives a surface in $\bfP(1^3, 3^2, 6^2)$ with Koszul regularity $11$ and weighted regularity $1$. In particular, if we consider a fixed ideal and change the ambient ring, the weighted regularity is more stable upon increasing the number of low degree variables, while the Koszul regularity is more stable upon increasing the number of high degree variables.
\end{example}

The degrees and regularities of all weighted determinantal scrolls (including cones) with codimension at least 2 in $\bfP(1^2, 3^2, 6^3)$ are recorded in \Cref{fig_table}.

\begin{figure}[h!]
\small
\[
\renewcommand{\arraystretch}{1.5}
\begin{array}[t]{c c}
\begin{array}[t]{c|c|c|c|c}
dim & profile & deg & kReg & wReg \\
\hline
 1 & \color{red}(1,\ 3^2,\ 6^3) & \color{red}\frac{13}{6} & \color{red}6 & \color{red}1\\
 \hline
 2 & \color{red}(1,\ 3^2,\ 6^2) & \color{red}\frac{1}{3} & \color{red} 6 &\color{red} -4\\
    & (1,\ 3,\ 6^3) & \frac{11}{18} & 9 & -1\\
    & (3^2,\ 6^3) & \frac{7}{6} & 11 & 1\\
\hline
3 & \color{red}(1,\ 3^2,\ 6) & \color{red}\frac{11}{6^3} & \color{red} 6 & \color{red}-9\\
   & (1,\ 3,\ 6^2) & \frac{5}{54} & 9 & -6 \\
   &(1,\ 6^3) & \frac{1}{6} & 12 & -3\\
   & (3^2,\ 6^2) & \frac{1}{6} & 11 & -4\\
   & (3,\ 6^3) & \frac{5}{18} & 14 & -1
\end{array} &
\begin{array}[t]{c|c|c|c|c}
dim & profile & deg & kReg & wReg \\
\hline
4 & \color{red}(1,\ 3\ ,3) & \color{red}\frac{5}{3\cdot 6^3} & \color{red}3 & \color{red}-14\\
   & (1,\ 3\ ,6) & \frac{1}{2\cdot6^2} & 6 & -11\\
   & (1,\ 6\ ,6) & \frac{5}{6\cdot3^3} & 9 & -8\\
   & (3,\ 3\ ,6) & \frac{5}{6^3} & 8 & -9\\
   & (3,\ 6,\ 6) & \frac{1}{3^3} & 11 & -6\\
   & (6,\ 6\ ,6) & \frac{1}{2\cdot3^2} & 14 & -3
\end{array}
\end{array}
\]
\caption{Degree and regularities for weighted determinantal scrolls in $\bfP(1^2, 3^2, 6^3)$. Lines in red correspond to varieties of minimal degree of each dimension.}\label[figure]{fig_table}
\end{figure}

\section{Curves in weighted projective threefolds} \label{section_threefold}

In this section, we study curves in more general weighted projective threefolds. We take the results from the divisible case as a starting point and use them to bound the degree in the nondivisible setting, illustrating how an inductive approach can be effective in obtaining more general bounds. 

Suppose $C\subset \bfP(w_0,w_1,w_2,w_3)$ is an integral Cohen--Macaulay curve defined by the $2\times 2$ minors of a $2\times 3$  matrix over $S(\bfw)$. We will call such $C$ a \textbf{determinantal curve}, and we begin by computing its degree. Note that in more general weighted projective spaces, the existence of 1-generic matrices is not guaranteed. We are thus forced to deal with the more general pseudo 1-generic matrices. As a reminder, the entries of these matrices may have degrees that are not among the weights of the ambient weighted projective space, and the degrees of entries in a given column may not all be the same.

\begin{prop} \label[prop]{det_curve_degree}
Let $M$ be a $2\times 3$ matrix over $S(w_0,w_1,w_2,w_3)$ with profile $(a_1,a_2,a_3; b)$. If the curve $C$ defined by the maximal minors of $M$ is Cohen--Macaulay, then $C$ has degree
\[
\frac{b(a_1+a_2+a_3+b) + a_1a_2 + a_1a_3 + a_2a_3}{w_0w_1w_2w_3}.
\]
\end{prop}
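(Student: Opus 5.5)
Since $C$ is Cohen--Macaulay of codimension $2 = 3-2+1$, the ideal $I_2(M)$ has the expected codimension. The Hilbert--Burch theorem (equivalently the Eagon--Northcott complex of \cite[Thm A2.60]{eisenbud:syzygies}) therefore gives the minimal graded free resolution
\[
0 \to S(\bfw)^2 \xrightarrow{M^T} S(\bfw)^3 \to S(\bfw) \to S(\bfw)/I_C \to 0 .
\]
The plan is to read off the graded shifts, write down the Hilbert series, and apply \Cref{thm_degree}.

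First we fix the degrees. With profile $(a_1,a_2,a_3;b)$, the first row has degrees $a_1,a_2,a_3$ and the second row has degrees $a_1+b,a_2+b,a_3+b$. The maximal minor omitting column $j$ has degree $\sum_{i\ne j}a_i + b$. Set $A=a_1+a_2+a_3$. The generators then sit in degrees $A+b-a_j$ for $j=1,2,3$. The two syzygies are the rows of $M$, which gives degrees $A+b$ and $A+2b$; one checks that the shifts $A+b$ and $A+2b$ are compatible with the minors. Hence
\[
HS_C(t)=\frac{1-\sum_{j=1}^3 t^{A+b-a_j} + t^{A+b}+t^{A+2b}}{\prod_{i=0}^3(1-t^{w_i})} .
\]

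Next we reduce to the form needed in \Cref{thm_degree}. The curve has dimension $d=1$, so we keep two denominator factors, say $(1-t^{w_2})(1-t^{w_3})$, and set
\[
P_C(t)=\frac{N(t)}{(1-t^{w_0})(1-t^{w_1})},
\]
where $N(t)$ is the numerator above. Then $\deg C = P_C(1)/(w_2w_3)$. Because $N$ has a double zero at $t=1$, we get $P_C(1)=N''(1)/(2w_0w_1)$ by L'Hôpital; one could equally expand both sides to second order in $t-1$. It follows that $\deg C = N''(1)/(2w_0w_1w_2w_3)$, so everything reduces to computing $\tfrac12 N''(1)$.

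The remaining step is the computation. Write $N(t)=\sum_\ell \epsilon_\ell t^{e_\ell}$. We have $N(1)=0$ and $N'(1)=\sum \epsilon_\ell e_\ell = -(3A+3b-A)+(A+b)+(A+2b)=0$, which serves as a check on the shifts. Since $N'(1)=0$, $\tfrac12 N''(1)=\tfrac12\sum\epsilon_\ell e_\ell^2$. This equals
\[
\tfrac12\Big[(A+b)^2+(A+2b)^2-\sum_j (A+b-a_j)^2\Big].
\]
Expanding, the $A^2$, $Ab$ and $b^2$ terms give $-\tfrac12 A^2 + bA + b^2 + \tfrac12\sum a_j^2$. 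Since $A^2-\sum a_j^2 = 2\sum_{i<j}a_ia_j$, this is $b(A+b)+a_1a_2+a_1a_3+a_2a_3$, which is the claimed numerator.

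The only real obstacle is justifying that the Hilbert--Burch complex is the minimal resolution with exactly these shifts. This requires depth/codimension two, which the Cohen--Macaulay hypothesis supplies via Hilbert--Burch. It also requires that the shifts are right in the possibly non-1-generic, pseudo 1-generic setting; the arithmetic itself is routine. For that, homogeneity of $I_2(M)$ forces the column-degree structure encoded by the profile, and minimality is not actually needed, since any graded free resolution computes the Hilbert series.
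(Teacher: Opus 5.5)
Your proposal is correct and is essentially the paper's argument: the Eagon--Northcott (Hilbert--Burch) resolution gives the same Hilbert series, and \Cref{thm_degree} with the factors $(1-t^{w_2})(1-t^{w_3})$ kept reduces everything to $P_C(1)$. You evaluate $P_C(1)$ as $N''(1)/(2w_0w_1)$ using $N(1)=N'(1)=0$, rather than by the paper's long division and coefficient count, which is a cleaner computation that does not depend on ordering the $a_i$. One slip: your intermediate expansion should read $\tfrac12A^2+bA+b^2-\tfrac12\sum_j a_j^2$ (you have the signs of $A^2$ and $\sum_j a_j^2$ reversed), and it is this expression that equals $b(A+b)+a_1a_2+a_1a_3+a_2a_3$.
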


\begin{proof}
The homogeneous coordinate ring of $C$ is resolved over $S(\bfw)$ by the Eagon-Northcott complex on $M$, which has the form
$$0 \to S(-a_1-a_2-a_3-2b) \oplus S(-a_1-a_2-a_3-b)
\to \bigoplus_{1 \le i < j \le 3} S(-a_i-a_j-b)
\to S \to 0.
$$
From this we obtain the Hilbert series
\[
HS_C(t) = \frac{1-t^{a_1+a_2+b}-t^{a_1+a_3+b}-t^{a_2+a_3+b}+t^{a_1+a_2+a_3+b}+t^{a_1+a_2+a_3+2b}}{(1-t^{w_0})(1-t^{w_1})(1-t^{w_2})(1-t^{w_3})}
\]
Using polynomial long division to divide the numerator by $(1-t)^2$, we can reduce this to
\[
\frac{P_C(t)}{(1+t+\cdots+t^{w_0-1})(1+t+\cdots + t^{w_1-1})(1-t)^{w_2}(1-t)^{w_3}}
\]
where $P_C(t)$ is a polynomial of degree $a_1+a_2+a_3+2b-2$ with coefficients
\begin{multline}
1,2,\ldots, b, b+2, b+4, \ldots, b+2a_1, b+2a_1+1, b+2a_1+2, \ldots, b+a_1+a_2, \\ \underbrace{b+a_1+a_2, \ldots, b+a_1+a_2}_{a_3-a_2}, b+a_1+a_2-1, b+a_1+a_2-2, \ldots, 1.
\end{multline}
By \cref{thm_degree}, the degree of $C$ is
\begin{align*}
\deg(C) &= \frac{P_C(1)}{w_0w_1w_2w_3} \\
&= \frac{\left(\displaystyle\sum_{i=1}^b i\right) + \left( \displaystyle\sum_{i=1}^{a_1}b+2i\right) + \left(\displaystyle\sum_{i=1}^{a_2-a_1}b+2a_1+i \right) + (a_3-a_2)(b+a_1+a_2) + \left(\displaystyle\sum_{i=1}^{b+a_1+a_2-1}i \right)}{w_0w_1w_2w_3}
\end{align*}
which simplifies to the expression in the statement.
\end{proof}

 \begin{example} \label[example]{candidate_11mn} Consider $\bfP(1,1,m,n)$ with coordinates $x_1,x_2,y,z$, in order of increasing degree. Let $k = \floor{\frac{n}{m}}$ and consider the matrix
$$
M = \begin{pmatrix}
 x_1 & x_2^{(k+1)m-n} & y^k   \\
 x_2^{n-km+1} & y  & z 
\end{pmatrix}.
$$
Note that $M$ is pseudo 1-generic and $X = V(I_2(M))\subseteq \bfP(1,1,m,n)$ is an integral curve. Since $M$ has profile $(1,(k+1)m-n,mk;n-km)$, \Cref{det_curve_degree} shows that the degree of $X$ is
 $1 + \frac{1}{n} + \frac{k}{n}$. 
 \end{example}
\begin{prop} \label[prop]{threefold_bound_weak} Let $C$ be a non-degenerate integral curve on $\bfP(1,1,m,n)$. Then 
$$
\deg(C)  
\geq 1 + \frac{1}{m} - \frac{n\bmod m}{mn}. 
$$
\end{prop}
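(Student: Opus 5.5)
The plan is to reduce to a classical degree bound by pulling back along a Veronese embedding, and to feed that bound a lower bound on the Hilbert function obtained from a general weight-one section. First I would simplify the target. Write $n = km + r$ with $k=\floor{n/m}$ and $r = n \bmod m$. Then
\[
1+\tfrac{1}{m}-\tfrac{r}{mn} \;=\; 1+\tfrac{n-r}{mn} \;=\; 1+\tfrac{k}{n},
\]
so it suffices to show $\deg(C)\geq 1+k/n$. This already sits just below the degree $1+(k+1)/n$ of the determinantal curve in \Cref{candidate_11mn}, so the target is plausible.

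Next, fix $L$ a multiple of $\operatorname{lcm}(m,n)$ with $\mcO(L)$ very ample, and let $\nu$ be the associated Veronese embedding into $\bfP^N$. By the remark relating weighted degree to Veronese degree, $\deg(C)=\deg(\nu(C))/L$. The curve $\nu(C)$ is integral and spans a linear space of dimension $HF_C(L)-1$. The classical bound $\deg \geq 1+\codim$ in that span gives $\deg\nu(C)\geq HF_C(L)-1$. Since everything scales correctly, I would apply this along multiples $L=tq$ with $t\to\infty$, using the leading coefficient as in \Cref{lemma_line_bundle_degree}. This reduces the statement to an asymptotic lower bound
\[
HF_C(t) \;\gtrsim\; t\left(1+\tfrac{k}{n}\right).
\]

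To get that Hilbert function bound, I would take a general form $\ell$ of degree $1$. Since $C$ is integral and not contained in $V(\ell)$, $\ell$ is a nonzerodivisor on $S(\bfw)/I_C$. Hence
\[
HF_C(t)=HF_C(t-1)+h(t), \qquad h(t)=\dim_\kk\bigl(S(\bfw)/(I_C,\ell)\bigr)_t .
\]
It then suffices to show $h(t)\geq 1$ for all $t\geq 0$ and $h(t)\geq 2$ for a set of $t$ of density $k/n$. The first is immediate, since the remaining weight-one variable $x$ satisfies $x^t\notin (I_C,\ell)$. For the second, I would use the monomials $x^{t-jm}y^j$ ($1\le j\le k$) and $zx^{t-n}$. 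I would argue that for each residue class $t\equiv jm \pmod n$ with $1\leq j\leq k$ (and $t$ large), $x^t$ together with one of $x^{t-jm}y^j$ or $zx^{t-n}$ stays independent modulo $(I_C,\ell)$. The reason is that a relation $x^{t-jm}(y^j-cx^{jm})\in (I_C,\ell)$ or $x^{t-n}(z-cx^n)\in(I_C,\ell)$ can be transported back to a relation on $C$. Here I would use saturation and that $x$ is a nonzerodivisor on the section of an aCM-free reduced point scheme after removing the $V(x)$ component. That would place $C$ on a weighted hypersurface containing a variable of degree $m$ or $n$ linearly, and this should contradict non-degeneracy. Summing the increments then yields $HF_C(t)\geq t+\tfrac{k}{n}t+O(1)$.

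The main obstacle is exactly this lifting step: non-degeneracy of $C$ is a condition on $I_C$, and I need it to survive passage to the hyperplane section $\Gamma=C\cap V(\ell)$. The intended route is to show that if $\Gamma$ lay on $V(y-\lambda x^m)$ or $V(z-\mu x^n - \cdots)$ for a general $\ell$, then by varying $\ell$ in a one-parameter family these hypersurfaces would glue to one containing $C$. If that gluing argument is too weak, I would fall back on a direct monomial count. That count would use the weighted series viewpoint of \Cref{def_weighted_series} on the normalization $\bfP^1$ (rational not assumed, but genus only helps the count), bounding the dimension of the space spanned by $x_1^a x_2^b y^j$ and $z$ in degree $t$ from below.
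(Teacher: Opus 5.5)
There is a genuine gap: the whole argument rests on the claim that $h(t)\geq 2$ for $t\equiv jm \pmod n$, and that claim is both unproved and false.

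\textbf{What is fine.} Your preliminary reductions are correct. You have $1+\frac1m-\frac{n \bmod m}{mn}=1+\frac kn$, a general degree-one $\ell$ is a nonzerodivisor on $S(\bfw)/I_C$, $HF_C(t)=\sum_{s\le t}h(s)$, and $h(t)\ge 1$. The Veronese detour through the classical bound is harmless but empty. Already $\deg C=\lim HF_C(t)/t$, and the classical bound on $\nu(C)$ only returns $\deg C\ge \deg C-o(1)$.

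\textbf{Why the mechanism cannot work.} You propose that a dependence $x^{t-jm}(y^j-cx^{jm})\in(I_C,\ell)$ or $x^{t-n}(z-cx^n)\in(I_C,\ell)$ can be ``transported back'' to an equation of $C$. This never uses the residue of $t$, since those monomials exist in every degree $t\ge n$. If it were valid it would give $h(t)\ge 2$ for all $t\gg 0$, hence $\deg C\ge 2$. But $h$ is eventually periodic with average $\deg C$. So every non-degenerate curve of degree $<2$ has $h(t)=1$ infinitely often, and for those $t$ both relations hold modulo $(I_C,\ell)$ although $C$ is non-degenerate. A hyperplane section lies on many hypersurfaces the curve does not, and varying $\ell$ in a pencil does not change that. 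The fallback ``monomial count on the normalization'' is not yet an argument.

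\textbf{The residue-class claim is false.} In $\bfP(1,1,3,4)$ we have $m=3$, $n=4$, $k=1$, so your claim reads $h(t)\ge2$ for $t\equiv 3\pmod 4$. Let $C$ be the image of $\mcP(4,3)\to\mcP(1,1,3,4)$, $[u:v]\mapsto[uv^5:u^4v:v^{19}:u^{19}]$. It is cut out by the minors of $\begin{pmatrix} x_1 & x_2^4 & y\\ x_2 & z & x_1^3\end{pmatrix}$ and is integral and non-degenerate of degree $\frac{19}{12}$. The Eagon--Northcott complex gives
\[
\sum_{t\ge 0} h(t)\,s^t=\frac{1+s+s^2+s^3-s^5-s^6-2s^7}{(1-s^3)(1-s^4)}.
\]
Hence for $t\ge 7$, $h(t)$ equals $1$, plus $1$ if $3\mid t$, plus $1$ if $4\mid t$. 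In particular $h(t)=1$ for all $t\equiv 7\pmod{12}$, even though these $t$ satisfy $t\equiv m\pmod n$.

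\textbf{What is actually missing.} The extra contributions to $h$ come from the points of $C\cap V(x_1,x_2)$, here the $\mu_3$- and $\mu_4$-points. Which residues they occupy is dictated by the stabilizers and local vanishing orders there, not by $jm \bmod n$. You need an argument that these contributions have density at least $\frac kn$. Such an argument would run roughly as follows.
\begin{itemize}
\item If $[x_1:x_2]$ has degree $\ge 2$ on $C$, then $h\ge 2$ eventually and you are done.
\item Otherwise $C$ is rational, and non-degeneracy forces $C$ to meet $V(x_1,x_2)$.
\item If it meets $V(x_1,x_2)$ at a point with $y\ne 0$, then $y^{t/m}$ and $x^t$ are independent modulo $(I_C,\ell)$ whenever $m\mid t$. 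This gives density $\frac1m\ge\frac kn$.
\item If it meets $V(x_1,x_2)$ only at $[0:0:0:1]$, you need a genuine local computation there.
\end{itemize}
The paper avoids all of this. It maps by the degree-$m$ monomials in $x_1,x_2,y$ together with $z$ to $\bfP(1^{m+2},n)$, where the degree grows by at most a factor $m$. It then applies the divisible bound $m+1+\frac1n$ from \Cref{prop_degree_bound} to force $\psi(C)$ into a hyperplane, and pulls that hyperplane back to contradict non-degeneracy.
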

\begin{proof} 
Let $C$ be a curve of degree less than $1 + \frac{1}{m} - \frac{n\bmod m}{mn} $. Take all of the degree $m$ monomials in $x_1, x_2, y$ along with $z$. This gives a morphism 
$$
\psi:\bfP(1,1,m,n) \to \bfP(1^{m+2},n).
$$
Note that regardless of whether this map is generically injective we have $\deg \psi(C) \leq m \deg(C)$.
Thus, the image $\psi(C)$ has degree less than $m +1  - \frac{n\bmod m}{n} $. Since a curve of minimal degree in $\bfP(1^{m+2},n)$ has degree $m+1 + \frac{1}{n}$, it follows that $\psi(C)$ lies on a hyperplane. If the hyperplane contained $z$, then $\psi(C)$ would lie in $\bfP(1^{m+2})$ and, since the minimal degree in this projective space is $m+1$, it follows that $\psi(C)$ is contained in another hyperplane not involving $z$. Regardless,  $\psi(C)$ is contained in a hyperplane not involving $z$. If it involves $y$, then the pullback also involves $y$ and $C$ would be degenerate. If it only involved the weight 1 variables, then the pullback of the equation is a homogeneous polynomial in two variables. This completely factors and once again $C$ is degenerate, a contradiction.
\end{proof}

As can be gleaned from the proof, the bound might not be sharp when the map $\psi$ is not generically injective. We conjecture that the determinantal curve constructed in \Cref{candidate_11mn} has minimal degree.

\begin{conjecture} \label[conjecture]{threefold_conjecture} Let $C$ be a non-degenerate integral curve on $\bfP(1,1,m,n)$. Then 
$$
\deg(C)  
\geq  1 + \frac{1}{m} + \frac{1}{n} - \frac{n \bmod m}{mn}.
$$
\end{conjecture}

\begin{remark} Using \Cref{prop_degree_bound} and \Cref{threefold_bound_weak}, we see that \Cref{threefold_conjecture} is true when  $n \equiv 0$ or $m-1 (\bmod m)$.
\end{remark}

Finally, we end with an example of a rational curve in a three-dimensional weighted projective space whose degree is less than that of any determinantal curve. We consider the weighted projective space $\bfP(1,3,4,7)$ with coordinates $x,y,z,w$, in order of increasing degree. Consider the map $\bfP^1\to\bfP(1,3,4,7)$ given by the weighted series 
$$
\{s^{12}, s^{15}t^{21}, s^6 t^{42}, t^{84}\}\subset \bigoplus_{i\in\Z}H^0(\bfP^1, \mcO(12)^{\otimes i}).
$$
The image of this map is a rational curve of degree $\frac{12}{21} = \frac{4}{7}$. This curve is in fact a complete intersection, cut out by the ideal $(y^2-x^2z, z^2-xw)$. 

\begin{prop} \label{prop_1347}Every integral determinantal curve $C$ in $\bfP(1,3,4,7)$ has degree strictly larger than $\frac{4}{7}$.
\end{prop}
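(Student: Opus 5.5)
The plan is to reduce the statement to a finite combinatorial check using \Cref{det_curve_degree}. Let $C$ be an integral determinantal curve with $2\times 3$ matrix $M$ of profile $(a_1,a_2,a_3;b)$, where $a_1\le a_2\le a_3$ and $b\ge 0$. Since $w_0w_1w_2w_3=84$ and $\frac{4}{7}=\frac{48}{84}$, the claim is equivalent to
\[
N(a_1,a_2,a_3;b):=b(a_1+a_2+a_3+b)+a_1a_2+a_1a_3+a_2a_3\ \ge\ 49 .
\]

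First I will normalize $M$. I may assume no entry of $M$ is a unit. Otherwise column and row operations reduce $M$ to a $2\times 2$ matrix, and $C$ is a complete intersection rather than determinantal in the intended sense. Then the Eagon--Northcott complex is a minimal resolution, so the profile is well defined and every $a_i\ge 1$. By \Cref{prime_is_generic}, $M$ is pseudo 1-generic. Since $N$ is strictly increasing in each $a_i$ and in $b$, only finitely many profiles have $N\le 48$. The task is therefore to show that none of these \emph{small} profiles can occur for a pseudo 1-generic matrix over $S(1,3,4,7)$ whose minors cut out an integral curve.

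To rule out small profiles I will use the explicit low-degree pieces of $S=\kk[x,y,z,w]$:
\[
S_1=\langle x\rangle,\quad S_2=\langle x^2\rangle,\quad S_3=\langle x^3,y\rangle,\quad S_4=\langle x^4,xy,z\rangle,\quad \ldots,\quad S_7\ni w .
\]
I will combine these with three constraints.

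\begin{enumerate}
\item Each generalized row consists of three forms that are a regular sequence. Forms of degree $<3$ are powers of $x$, so at most one entry of a generalized row has degree $\le 2$. Forms of degree $<4$ lie in $\kk[x,y]$, so at most two entries have degree $\le 3$.
\item In a column where $b=0$, the two entries have the same degree. If that graded piece is one-dimensional ($S_1$ or $S_2$), some generalized row kills that entry, which contradicts pseudo 1-genericity. More generally, the same argument applied to the pencil of rows, in the spirit of the proof of \Cref{thm_minDegStructure}, forces the degrees $a_i$ and $a_i+b$ up.
\item The variable $w$ must appear in $M$. Otherwise $I_2(M)$ is extended from $\kk[x,y,z]$, and $V(I_2(M))$ is a $7$-cone over a zero-dimensional scheme. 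Such a cone is either reducible or can be checked directly to be excluded (for instance, it is degenerate, or its ideal is a complete intersection). So some entry has degree $\ge 7$; that is, $a_3+b\ge 7$.
\end{enumerate}

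With these constraints in hand, I will run a case analysis on $b$. For $b=0$, the constraints give roughly $a_1\ge 3$, $a_2\ge 4$ and $a_3\ge 7$, which yields $N\ge 12+21+28=61$. For $b\ge 1$, I enumerate the possibilities $(a_1,a_2,a_3)$ with $N\le 48$. The quantity $b(s+b)$ grows quickly, so the relevant cases are $b\le 5$ or so with small $a_i$. For each of these profiles I will exhibit either a generalized row whose entries fail to be a regular sequence (typically two entries forced to be powers of $x$, or forced into $\kk[x,y]$) or a nonzero-divisor obstruction showing that $I_2(M)$ is not prime.

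The main obstacle is this enumeration, and specifically the profiles with $b>0$ in which the entries of degree $a_i+b$ in the second row can genuinely involve $y$, $z$ or $w$. In those cases the simple dimension count of the graded pieces does not immediately kill a generalized row. There I would first try the device from the proof of \Cref{thm_minDegStructure}: choose $[1:c]$, change coordinates, and find a value of $c$ at which a polynomial condition in $c$ has a root, producing a non-regular generalized row. If that fails, I would check primality directly by localizing at $x$, as in \Cref{profile_not_unique}.
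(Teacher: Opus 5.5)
Your outline is the paper's: \Cref{det_curve_degree} reduces the claim to $N\geq 49$, \Cref{prime_is_generic} gives pseudo 1-genericity, the first row forces $a_2\geq 3$ and $a_3\geq 4$, the appearance of $w$ forces $a_3+b\geq 7$, and a finite check remains. Your exclusion of unit entries is a point the paper leaves implicit, and it is needed: the degree-$\frac47$ complete intersection $(y^2-x^2z,\,z^2-xw)$ is $I_2$ of $\begin{pmatrix}1&0&0\\0&y^2-x^2z&z^2-xw\end{pmatrix}$.

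\textbf{The gap.} The finite check is the entire content of the proof, and you do not carry it out. Worse, the mechanism you propose for it fails. Your item (2) claims that pseudo 1-genericity ``forces $a_i$ and $a_i+b$ up'' also when $b\geq 1$, and this is false. Take $M=\begin{pmatrix}x&y&z^2\\x^2&z&y^3+x^2w\end{pmatrix}$.
\begin{itemize}
\item It has profile $(1,3,8;1)$, so $N=48$ and the degree would be exactly $\frac47$.
\item All four variables occur, so your item (3) does not exclude it.
\item It is pseudo 1-generic. Locally at the origin, $R_1+\beta R_2$ generates $(x,\,y+\beta z,\,z^2+\beta y^3)$ and $R_2$ generates $(x^2,\,z,\,y^3+x^2w)$; both have height $3$.
\end{itemize}
So no generalized-row argument or $[1:c]$ device can exclude this profile. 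Localizing at $x$ will not exclude it either, because $I_2(M)S_x=(z-xy,\,y^3-x^3y^2+x^2w)$ is prime.

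\textbf{The missing idea.} What excludes this profile is that $I_2(M)$ itself is not prime: $x$ divides the two minors through the first column. In general, if $a_1=1$ and $b=1$, the first column is $(\lambda x,\mu x^2)^T$. If $\lambda\mu=0$, a row contains $0$ and pseudo 1-genericity fails. Otherwise, the unimodular row operation $R_2\mapsto R_2-\frac{\mu}{\lambda}xR_1$ preserves $I_2(M)$ and creates a zero entry, so \Cref{prime_is_generic} shows $I_2(M)$ is not prime. This is the paper's constraint ``$a_1=1\Rightarrow b\geq 2$''. The paper attributes it to pseudo 1-genericity, but it is primality that fails.

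\textbf{How the check closes.} Add also $a_1=2\Rightarrow b\geq 1$; here $S_2=\langle x^2\rangle$, so your $b=0$ argument does work. Then the only profiles with $N\leq 48$ are $(1,3,5;2)$ and $(2,3,6;1)$. The difficulty you anticipate with second-row entries never arises: both profiles fail in the \emph{first} row. Indeed $S_5=\langle x^5,x^2y,xz\rangle\subseteq (x)$ and $S_2,S_3,S_6\subseteq (x,y)$, so in each case the first row generates an ideal of height at most $2$. This is how the paper finishes, via the normal forms $\begin{pmatrix}x^2&c_1y&c_2y^2\\ \star&\star&\star\end{pmatrix}$ and $\begin{pmatrix}x&\star&0\\ \star&\star&\star\end{pmatrix}$.
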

\begin{proof}
Let $M$ be the $2\times 3$ matrix whose minors define $C$.
By \Cref{prime_is_generic}, $M$ must be pseudo 1-generic. Let $M$ have profile $(a_1,a_2,a_3;b)$, so that the degree of $C$ is as in \Cref{det_curve_degree}. We may assume that $a_1\leq a_2\leq a_3$, so the degrees of the entries of $M$ look like 
 \[
 \begin{pmatrix}
 a_1 & a_2 & a_3\\
 a_1+b & a_2+b & a_3+b
 \end{pmatrix}.
 \]
 We note some immediate restrictions on the profile of $M$:
\begin{enumerate}
\item Since there is only one variable, $x$, of degree 1 and no variable of degree 2, $a_2$ must be at least 3; otherwise, the first two entries of the top row of $M$ would share the common factor $x$, violating pseudo 1-genericity. Similarly, $a_3 \geq 4$.
\item If $a_1 = 1$, then $b \geq 2$; otherwise, both entries in the first column would share the common factor $x$, again violating pseudo 1-genericity. Similarly, if $a_1 = 2$, then $b \geq 1$.
\item Since $C$ is non-degenerate, each of the variables $x,y,z,w$ in $S(\bfw)$ must appear somewhere in $M$, so we must have that $a_3+b\geq 7$.
 \end{enumerate}
By \Cref{det_curve_degree},  if $\deg(C) \leq\frac{4}{7}$, the profile $(a_1,a_2,a_3;b)$ must satisfy
 \begin{equation}\label[equation]{eq_48}
 b(a_1+a_2+a_3+b) + a_1a_2 + a_1a_3 + a_2a_3 \leq 48.
 \end{equation}
Given the earlier constraints, we obtain $b(10+a_1) + 7a_1+ 12  \leq 48$. The only options are $a_1 = 1$ and $b = 2$ or $a_1 = 2$ and $b=1$ (see point (2) above). The first case forces $a_3 \geq 5$ which in turn gives us the profile $(1,3,5;2)$ and the second case forces $a_3 \geq 6$ which gives the profile $(2,3,6;1)$.
If $M$ has profile $(2,3,6;1)$, then the top left entry must be $x^2$ (up to scalar multiple). We can then use row and column operations to clear all multiples of $x^2$ out of the first row and column so that $M$ has the form 
\[
\begin{pmatrix}
x^2 & c_1y & c_2y^2\\
\star & \star & \star
\end{pmatrix}
\]
for some $c_i \in \kk$.
This is clearly not pseudo 1-generic.
If $M$ has profile $(1,3,5;2)$, then we can assume the top left entry is $x$. Once again, row and column operations allow us to put $M$ into the form 
\[
\begin{pmatrix}
x & \star & 0\\
\star & \star& \star
\end{pmatrix}
\]
This is also not pseudo 1-generic. Thus, $\deg(C) > \frac{4}{7}$, as required.
\end{proof} 
 
\begin{remark} There are integral determinantal curves in $\bfP(1,3,4,7)$. For example, the scheme defined by
$$
I_2
\begin{pmatrix}
x & y & z \\
x^4+z & x^6+x^3y & w
\end{pmatrix}
$$
is an integral curve of degree $\frac{13}{14}$.
\end{remark}

In addition to \Cref{threefold_conjecture}, we conclude with several natural questions regarding \textit{divisible weighted projective spaces}. We have seen that not all varieties of minimal degree are arithmetically Cohen--Macaulay; as a starting point, it would be useful to characterize this class.

\begin{question} 
Are all arithmetically Cohen--Macaulay varieties of minimal degree in $\bfP(\bfw)$ determinantal?
\end{question}

We know that all curves of minimal degree are rational and smooth.

\begin{question}
Determine which maps from stacky projective lines to $\bfP(\bfw)$ correspond to curves of minimal degree in $\bfP(\bfw)$.
Which have images that are arithmetically Cohen--Macaulay?
\end{question}

\begin{question} 
Geometrically classify the curves of minimal degree in $\bfP(\bfw)$. Let $Q(t)$ be the Hilbert quasi-polynomial of a curve of minimal degree in $\bfP(\bfw)$. To answer this question, one can try to understand the components of the Hilbert scheme $\Hilb^{Q(t)}(\bfP(\bfw))$. Is it irreducible? If not, what are its components, and what do their general members look like? Are the components generically smooth? A good starting case would be the curves in $\bfP(1,1,2,2)$. 
\end{question}

We showed in \Cref{minDeg_minReg} that scrolls of minimal degree have minimal Koszul and weighted regularity, but saw in \Cref{three_curves_2} that in the non Cohen--Macaulay case the weighted regularity at least may increase. How general is this phenomenon?

\begin{question} 
Let $X, Y \subseteq \bfP(\bfw)$ be two varieties of minimal degree, and assume that $X$ is arithmetically Cohen--Macaulay and $Y$ is not. How do the weighted and Koszul regularities of $X$ and $Y$ compare? 
\end{question}

As seen in \Cref{not_proj_equiv}, the Kronecker--Weierstrass form is not determined up to projective equivalence.

\begin{question} 
Describe the $\text{Aut}(\bfP(\bfw))$-orbits of the weighted $1$-generic matrices; that is, determine when two Kronecker--Weierstrass forms are projectively equivalent. As a first step, describe the $GL$-orbits of $2\times n$ matrices of linear forms.
\end{question}

\bibliography{references.bib}

\end{document}